\theoremstyle{plain}
   \def\MR#1{}
\definecolor{approvalBlue}{rgb}{0.95,0.95,1.0}
\newtheorem{thm}{Theorem}[section]
\newtheorem{lemma}[thm]{Lemma}
\newtheorem{prop}[thm]{Proposition}
\newtheorem{cor}[thm]{Corollary}
\newtheorem{THM}{Theorem}
\newtheorem{ex}[thm]{Example}
\theoremstyle{remark}
\newtheorem{remark}[thm]{Remark}
\newcommand{\mb}{\mathbb}
\newcommand{\mc}{\mathcal}
\newcommand{\C}{\mb C}
\newcommand{\F}{\mc F}
\newcommand{\G}{\mc G}
\newcommand\restr[2]{{
  \left.\kern-\nulldelimiterspace 
  #1 
  \vphantom{\big|} 
  \right|_{#2} 
  }}
\tikzset{
    table/.style={
       matrix of nodes,
        row sep=-\pgflinewidth,
        column sep=-\pgflinewidth,
        nodes={
            rectangle,
            draw=black,
            align=center
        },
        minimum height=1.5em,
        text depth=0.5ex,
        text height=2ex,
        nodes in empty cells,
        every even row/.style={
            nodes={fill=gray!20}
        },
        column 1/.style={
            nodes={text width=3.0cm,align=center,font=\small}
        },
        column 2/.style={
            nodes={text width=3.4cm,align=center},font=\small},
        column 3/.style={
            nodes={text width=5.0cm,align=center},font=\small},
        row 1/.style={
            nodes={ font=\bfseries
            }
        }
   }
}
\newcommand{\Dist}[4][1]{\textsf{Dist}^{#1}_{#2}(\mathbb P_{#4}^{#3})}
\newcommand{\Fol}[4][1]{\textsf{Fol}^{#1}_{#2}(\mathbb P_{#4}^{#3})}
\newcommand{\Exc}[2]{\textsf{Exc}(\mathbb P_{#2}^{#1})}
\newcommand{\Closed}[3]{\textsf{Cl}_{#1}(\mathbb P_{#3}^{#2})}
\newcommand{\Lin}[3]{\textsf{Lin}_{#1}(\mathbb P_{#3}^{#2})}
\newcommand{\QLin}[3]{\textsf{NLin}_{#1}(\mathbb P_{#3}^{#2})}
\newcommand{\Log}[3]{\textsf{Log}_{(#1)}(\mathbb P_{#3}^{#2})}
\newcommand{\Rational}[1]{\mathcal M_{#1}}
\newcommand{\normal}[1]{#1^n}
\DeclareMathOperator{\codim}{codim}
\DeclareMathOperator{\sing}{sing}
\DeclareMathOperator{\Aut}{Aut}
\DeclareMathOperator{\Ann}{Ann}
\DeclareMathOperator{\Res}{Res}
\DeclareMathOperator{\Frob}{Frob}
\DeclareMathOperator{\HomSheaf}{\mathscr{H}\text{\kern -3pt {\calligra\large om}}\,}
\newcommand{\ie}{{\it{i.e., }}}
\newcommand{\pcurvature}[1]{{\psi_{#1}}}
\newcommand{\pdegeneracy}[1]{{\Delta_{#1}}}
\newcommand{\pkernel}[1]{{\mathscr V({#1})}}
\newcommand{\CartierTransform}[1]{{\mathscr{C}(#1)}}
\newcommand{\Cartier}{\mathbf{C}}
\newcommand{\field}{{\mathsf k}}
\DeclareMathOperator{\End}{End}
\numberwithin{equation}{section}
\numberwithin{equation}{section}       
\title[Foliations in positive characteristic]{The space of foliations on projective spaces in positive characteristic}
\author{Wodson Mendson}
\author{Jorge Vit\'orio Pereira}
\date{\today}
\begin{document}

\begin{abstract}
    This work explores the space of foliations on projective spaces over algebraically closed fields of positive characteristic, with a particular focus on the codimension one case. It describes how the irreducible components of these spaces vary with the characteristic of the base field in very low degrees and establishes an arbitrary characteristic version of Calvo-Andrade’s stability of generic logarithmic $1$-forms under deformation.
\end{abstract}

\maketitle
\setcounter{tocdepth}{1}
\tableofcontents

\section{Introduction}

\subsection{Context} The study of the space of holomorphic foliations on projective spaces has been a subject of significant interest since the pioneering work of Jouanolou. Inspired by classical contributions from Jacobi, Darboux, and others, Jouanolou described codimension one foliations of degree one on projective spaces in his book \cite{MR537038}. In particular, his results imply that the space of codimension one holomorphic foliations of degree one on projective spaces, of dimension at least three, has exactly two irreducible components. 

More recently, in \cite{MR3066408}, it was noted that work by Medeiros \cite{MR1842027} provides an analogous description for degree one holomorphic foliations of arbitrary codimension on projective spaces. 

Cerveau and Lins Neto \cite{MR1394970} proved that the space of codimension one holomorphic foliations of degree two on projective spaces of dimension at least three has exactly six irreducible components and described precisely each one of them. The analogous problem for degree two holomorphic foliations of arbitrary codimension was recently studied by Corrêa and Muniz \cite{arXiv:2207.12880}. However,  a complete description of the irreducible components of the space of degree two holomorphic foliations  of codimension $q$  on $\mathbb P^n_{\C}$ remains an open question for any $q$ strictly between $1$ and $n-1$. The situation for degree three foliations is even less understood. While a partial classification exists for codimension one foliations, see \cite{MR4354288}, the problem remains unexplored for higher codimensions.

\subsection{Purpose and discussion of the main results} This work aims to initiate the study of the space of foliations on projective spaces over algebraically closed fields of positive characteristic, with a focus on the codimension one case. Our initial motivation was to understand how the geometry of these spaces vary with the characteristic of the base field, though our perspective evolved throughout the study and we were led to an analog  of Calvo-Andrade's stability of generic logarithmic $1$-forms under deformation in arbitrary characteristic, see Theorem \ref{THM:Log} below.

For codimension one foliations of degree zero and one, we provide a complete description of all irreducible components in arbitrary characteristic. For degree zero foliations, there is always a single irreducible component, see Theorem \ref{T:deg0} which also describes degree zero foliations of arbitrary codimension. Nevertheless, the behavior in characteristic two is different from other characteristics. Specifically, when the  characteristic is different from two, the space of degree zero foliations of codimension one can be identified with the Grassmannian of lines in $(\mathbb P^n_{\field})^{\vee}$, whereas in characteristic two, it corresponds to the projective space  $\mathbb P H^0(\mathbb P^n_{\field} , \Omega^1_{\mathbb P^n_{\field}}(2))$. 

For degree one foliations, the number of irreducible components depends on the characteristic. In characteristic two, there is a single  irreducible component, whereas for higher characteristics, there are exactly two. Moreover, the description is uniform when the characteristic is at least $5$, see Theorem \ref{T:deg1}.

A key technical tool in our analysis of foliations of degree zero and one is Medeiros' description of locally decomposable integrable differential forms with homogeneous coefficients of degree one, see \cite{MR1842027}. Although Medeiros originally formulated his result in the complex setting, the result remains valid in arbitrary characteristic. 

For degree two foliations, our results are less definitive. We are able to show that each of the six irreducible components that exist in characteristic zero specialize to irreducible components in characteristic $p$, when $p$ is at least $5$.  We achieve this by combining \cite[Theorem B]{pereira2025distributionslocallyfreetangent}  with the main result of this paper, namely:

\begin{THM}\label{THM:Log}  
    Let $r\ge 2$ and  $1 \le d_1 \le \ldots \le d_r$  be positive integers. Let $\field$ be an algebraically closed field of characteristic $p>0$.
    Assume that one of the following conditions holds:
    \begin{enumerate}
        \item $r=2$ and none of the integers $d_1,d_2,$ and $d_1 + d_2$ is divisible by $p$; or
        \item $r=2$ and both $d_1$ and $d_2$ are divisible by $p$; or
        \item  $r>2$ and the set  
        \[
            S = \{ i \in \{ 1, \ldots, r\} \, \vert \, p \text{ divides } d_i \}
        \]
        has cardinality different from $r-1$ and $r-2$.
    \end{enumerate}
    Then the set $\Log{d_1, \ldots, d_r}{n}{\field}$ is an irreducible component of $\Fol{d}{n}{\field}$ (with its reduced scheme structure) for $n\ge 3$ and $d = \sum_{i=1}^r d_i - 2$.
\end{THM}

For the definition of the scheme $\Fol{d}{n}{\field}$ we refer to  Section \ref{SS:Fol}, while  the set $\Log{d_1, \ldots, d_r}{n}{\field}$ is defined in Section \ref{SS:Log1}. 

Over the complex numbers, Theorem \ref{THM:Log} was established by Calvo-Andrade \cite{MR1286897}, with a proof based on complex-geometric and topological arguments. For an alternative approach using algebraic methods, which actually proves that $\Fol{d}{n}{\mathbb{C}}$  is generically reduced along $\Log{d_1, \ldots, d_r}{n}{\mathbb{C}}$, see \cite{MR3937325}.

Theorem~\ref{THM:Log} offers yet another proof of Calvo-Andrade's
result, this time from a more arithmetic perspective, see Corollary~\ref{C:CalvoAndrade}.

\subsection{Acknowledgments}  
We thank the anonymous referees for their careful reading and for suggestions that improved the exposition. We also thank João Pedro dos Santos for his interest and comments on a preliminary version of this work. J.V.P.  acknowledges support from  the CAPES-COFECUB (project Ma1017/24), CNPq (PQ Scholarship 304690/2023-6),  Projeto Universal 408687/2023-1 “Geometria das Equações Diferenciais Alg\'ebricas”), and FAPERJ (Grant number E26/200.550/2023). W. Mendson acknowledges support from INCTmat/Brazil, CNPq grant number 170249/2023-9.

\section{Basic definitions}

\subsection{Foliations and distributions} 
Let $X$ be a smooth algebraic variety defined over an arbitrary field  $\field$. 
A foliation $\F$ on $X$ consists of a pair $(T_{\mathcal F}, N^*_{\F})$ of coherent subsheaves of $T_X$ and $\Omega^1_X$ such that $T_{\F}$ is the annihilator of $N^*_{\F}$, $N^*_{\F}$ is the annihilator of $T_{\F}$, and $T_{\F}$ is closed under the Lie bracket. The sheaf $T_{\F}$ is the tangent sheaf of $\F$ and the sheaf $N^*_{\F}$ is the conormal sheaf of $\F$. The rank of $T_{\F}$ is the dimension of $\F$, while the rank of $N^*_{\F}$ is the codimension of $\F$. Since $T_{\F}$ and $N^*_{\F}$ are annihilators of each other, we have that $\dim \F + \codim \F= \dim X$. 

Dropping the closedness under Lie brackets, one obtains the definition of a distribution. To wit, a distribution $\mathcal D$ on $X$ consists of a pair $(T_{\mathcal D}, N^*_{\mathcal D})$ of subsheaves of $T_X$ and $\Omega^1_X$ such that $T_{\mathcal D}$ is the annihilator of $N^*_{\mathcal D}$ and vice-versa. 

\subsection{Differential forms}
If $\mathcal D$ is a distribution of codimension $q$,  taking the determinant of the inclusion $N^*_{\mathcal D} \to \Omega^1_X$ and tensoring the result by $N = \det(N^*_{\mathcal D})^*$ yields a $q$-differential form with coefficients in $N$, that is 
\[
    \omega \in H^0(X, \Omega^q_X \otimes N)
\]
satisfying the following conditions:
\begin{enumerate}
    \item\label{I:cod2} The zero locus of $\omega$ has codimension at least two; 
    \item\label{I:LD} At any closed point $x$ outside the zero locus of $\omega$, and in any local trivialization of $N$, the germ of the differential $q$-form $\omega$ at $x$ can be written as the product of $q$ germs of $1$-forms, $\omega_1, \ldots, \omega_q$;
    \item\label{I:integrable} Moreover, if $\mathcal D$ is not only a distribution but also a foliation then the germs of $1$-forms of the previous item are such that 
    \[
        d \omega_j \wedge \omega_1 \wedge \cdots \wedge \omega_q = 0 
    \]
    for any $j \in \{ 1, \ldots, q\}$.
\end{enumerate}
The twisted $q$-form $\omega$ is not unique, but is unique up to multiplication by elements of $H^0(X,\mathcal O_X^*)$, that is, nowhere-vanishing regular functions. 

We define saturated $q$-forms as those satisfying Condition \eqref{I:cod2}; 
locally decomposable $q$-forms as those satisfying Condition \eqref{I:LD}; and 
integrable $q$-forms as those satisfying Conditions \eqref{I:LD} and \eqref{I:integrable}.
In Medeiros' original terminology, locally decomposable forms were referred to as locally decomposable off the singular set (LDS) forms. We adopt the shorter term locally decomposable for the sake of brevity.

Conversely, given a nonzero twisted $q$-form $\omega \in H^0(X, \Omega^q_X \otimes N)$,  the kernel of the natural  morphism induced by contraction with $\omega$
\[
    \omega^{\flat}  : T_X \to \Omega^{q-1}_X \otimes N
\]
is a saturated subsheaf of $T_X$. Consequently, $(\ker \omega^{\flat}, \Ann(\ker \omega^{\flat})) = (T_\mathcal D, N^*_{\mathcal D})$  for some codimension $q$ distribution $\mathcal D$. 

\subsection{Plücker's equations}
As noted by Medeiros in \cite{MR1842027}, see also \cite[Appendix B]{pereira2025distributionslocallyfreetangent}, a twisted $q$-form $\omega \in H^0(X, \Omega^q_X \otimes N)$ is locally decomposable if, and only if, the morphism
\begin{align*}
    \bigwedge^{q-1} T_X &\longrightarrow \Omega_X^{q+1} \otimes N^{\otimes 2} \\
    v &\longmapsto i_v \omega \wedge \omega
\end{align*}
is identically zero. Notice that this condition can be rephrased as the classical Plücker conditions 
\[
    i_v \omega \wedge \omega = 0 \text{ for any } v \in \bigwedge^{q-1} T_X \, .
\]

Similarly, a $q$-form $\omega$  is integrable if, and only if, it is locally decomposable and the morphism
\begin{align*}
    \bigwedge^{q-1} T_X &\longrightarrow \Omega_X^{q+2} \otimes N^{\otimes 2} \\
    v &\longmapsto i_v \omega \wedge d \omega
\end{align*}
is identically zero. Here and above, when $q=1$, we adopt the convention that $\wedge^0 T_X = \mathcal O_X$ with interior product with elements of $\Omega_X^{\bullet}$ given by the usual multiplication. Hence, for $q=1$, the integrability condition is the usual $\omega \wedge d \omega =0 $ as a section of $\Omega^3_X \otimes N^{\otimes 2}$.

\begin{remark}
    The formulation of Plücker's condition above is independent of the characteristic. However, note  that in characteristic zero, the local decomposability of a $2$-form is equivalent to the condition $\omega \wedge \omega =0$. This equivalence fails in characteristic two. For instance, the $2$-form $\omega = dx_0 \wedge dx_1 + dx_2 \wedge dx_3$ is not decomposable in any characteristic, yet $\omega\wedge \omega = 2 dx_0 \wedge dx_1 \wedge dx_2 \wedge dx_3$, making $\omega \wedge \omega =0$ equal to zero in characteristic two. 
\end{remark}

\subsection{The schemes  $\Dist[q]{d}{n}{\field}$ and $\Fol[q]{d}{n}{\field}$}\label{SS:Fol}
Let $\field$ be an arbitrary algebraically closed field. Let $\mathcal D$ be a codimension $q$ distribution  on the  projective space $\mathbb P^n_{\field}$, $n\ge q+1$.
Let $\omega \in H^0(\mathbb P^n_{\field}, \Omega^q_{\mathbb P^n_{\field}}\otimes N)$ be a twisted $q$-form with zero set of codimension at least two whose kernel
equals $T_{\mathcal D}$. The degree of $\mathcal D$ is, by definition, the
degree of the zero locus of $i^*\omega \in H^0(\mathbb P^q_{\field}, \Omega^q_{\mathbb P^q_{\field}} \otimes i^* N)$, where $i: \mathbb P^q_{\field} \to \mathbb P^n_{\field}$ is a general linear embedding. Hence, if $d = \deg(\mathcal D)$, then 
\[
    N = \mathcal O_{\mathbb P^n_{\field}}(d+q+1).
\]

It follows from  Euler's sequence, that we can identify $H^0(\mathbb P^n_{\field}, \Omega^q_{\mathbb P^n_{\field}}(d+ q + 1))$ with the vector space of homogeneous polynomial $q$-forms
\[
     \sum_{\substack{I = (i_1, \ldots, i_q) \\  0\le i_1 < \cdots < i_q \le n}} a_I(x_0, \ldots, x_n) dx_I
\]
which are annihilated by
the radial vector field $R = \sum_{i=0}^n x_i \frac{\partial}{\partial x_i}$, and have  coefficients $a_0, \ldots, a_n$ of degree $d+1$.
We will say that a homogeneous polynomial $q$-form on $\mathbb A^{n+1}_{\field}$ representing an element of $H^0(\mathbb P^n_{\field}, \Omega^q_{\mathbb P^n_{\field}}(d+q+1))$ is a projective $q$-form of degree $d+q+1$.

The space of codimension $q$  foliations of degree $d$ on the projective space $\mathbb P^n_{\field}$ is, by definition, the locally closed subscheme $\Fol[q]{d}{n}{\field}$ of $\mathbb P H^0(\mathbb P^n_{\field}, \Omega^q_{\mathbb P^n_{\field}}(d+q+1))$
defined by the conditions
\[
    [\omega] \in \Fol[q]{d}{n}{\field} \text{ if, and only if, } 
    \left\lbrace \begin{array}{l}   \codim \sing(\omega) \ge 2, \\ i_v \omega \wedge \omega = 0 \text{ for every  } v \in \bigwedge^{q-1} {\mathbb A^{n+1}}, \text{ and } \\ i_v \omega \wedge d \omega = 0 \text{ for every  } v \in \bigwedge^{q-1} {\mathbb A^{n+1}}.  \end{array} \right.
\]

Likewise, the space of codimension $q$  distributions of degree $d$ on the projective space $\mathbb P^n_{\field}$ is, by definition, the locally closed subscheme $\Dist[q]{d}{n}{\field}$ of $\mathbb P H^0(\mathbb P^n_{\field}, \Omega^q_{\mathbb P^n_{\field}}(d+q+1))$
defined by the conditions
\[
    [\omega] \in \Dist[q]{d}{n}{\field} \text{ if, and only if, } 
    \left\lbrace \begin{array}{l}   \codim \sing(\omega) \ge 2, \text{ and } \\ i_v \omega \wedge \omega = 0 \text{ for every  } v \in \bigwedge^{q-1} {\mathbb A^{n+1}}.  \end{array} \right.
\]

\section{Foliations of degree zero}\label{S:degreezero}

\subsection{Euler's relation}\label{SS:Euler}
If $\omega$ is a homogeneous $q$-form of degree $e$ on $\mathbb A^{n+1}_{\field}$, meaning that the coefficients of $\omega$ have degree $e-q$,  then
by Euler's relation, we have
\begin{equation}\label{E:Euler}
    i_R d \omega + d i_R \omega = e \cdot \omega \, .
\end{equation}
Note that in our conventions both $x_i$ and $dx_i$ are assigned degree one.

As observed in \cite[Chapter 1, Proposition 1.4]{MR537038}, when the characteristic of $\field$ does not divide
$d+q+1$, there is a bijection of the set $H^0(\mathbb P^n, \Omega^{q}_{\mathbb P^n}(d+q+1))$ of projective $q$-forms of degree $d+q+1$ and the set of closed homogeneous $(q+1)$-forms of degree $d+q+1$. This bijection is given by the maps 
\[
    \omega \mapsto d \omega \quad \text{and} \quad \beta \mapsto (d+q+1)^{-1} i_R \beta \, .
\]
Furthermore, when $q=1$ and still under the same assumption on  the characteristic of $\field$,  the set of  integrable projective $1$-forms of degree $d+2$ is in bijection with the set of closed homogeneous $2$-forms $\beta$ of  degree $d+2$ satisfying
$\beta \wedge \beta=0$.

For later use, we record a consequence of Euler's formula.

\begin{lemma}\label{L:projectiveclosed}
    Let $\omega \in H^0(\mathbb P^n_{\field}, \Omega^q_{\mathbb P^n_{\field}}(e))$ be a nonzero projective $q$-form of degree $e$ on $\mathbb A^{n+1}_{\field}$. If $\omega$ is closed then the characteristic of $\field$ divides $e$. Moreover, if $e < p(q+1)$ then $\omega$ is exact.
\end{lemma}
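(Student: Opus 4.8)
The first assertion is immediate from Euler's relation. Since $\omega$ is a projective form we have $i_R\omega = 0$, so \eqref{E:Euler} reduces to $i_R d\omega = e\,\omega$. If moreover $\omega$ is closed, then $e\,\omega = i_R d\omega = 0$; as $\omega \neq 0$ and $\field$ is a field, this forces $e = 0$ in $\field$, i.e. $p \mid e$.

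For the exactness statement my plan is to use the computation of the graded de Rham cohomology of $S = \field[x_0,\dots,x_n]$ in characteristic $p$. Writing $\Omega^\bullet_S$ for its de Rham complex, graded by $\deg x_i = \deg dx_i = 1$, the Künneth formula reduces everything to the one–variable case, where $H^0 = \field[x^p]$ and $H^1 = \bigoplus_{m\ge 1}\field\cdot x^{mp-1}dx$. It follows that $H^q(\Omega^\bullet_S)$ has a basis consisting of the classes of $\big(\prod_{i\notin J} x_i^{pa_i}\big)\big(\prod_{j\in J} x_j^{pb_j-1}\big)\,dx_J$, with $|J|=q$, $a_i\ge 0$, $b_j\ge 1$; such a class has degree $p\big(\sum a_i + \sum b_j\big)\ge qp$. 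In particular $H^q(\Omega^\bullet_S)$ vanishes in every degree $<qp$ and, in degree exactly $qp$, is spanned by the classes $[x_J^{p-1}dx_J]$ with $|J|=q$, where $x_J^{p-1}=\prod_{j\in J}x_j^{p-1}$. Equivalently, one may invoke the Cartier isomorphism, under which $x_J^{p-1}dx_J \mapsto dx_J$.

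Since a closed projective form has degree divisible by $p$, the case $e<qp$ is then settled at once: the class $[\omega]\in H^q(\Omega^\bullet_S)$ lives in a degree where the cohomology vanishes, so $\omega$ is exact — here projectivity is not even needed. The essential case, and the main obstacle, is $e=qp$, where the cohomology no longer vanishes and one must use $i_R\omega=0$ to kill the part of $\omega$ detected by cohomology. I would write $\omega = \omega_0 + d\eta$ with $\omega_0 = \sum_{|J|=q} c_J\, x_J^{p-1}dx_J$ and $\eta$ a $(q-1)$-form of degree $qp$. Applying \eqref{E:Euler} to $\eta$ and using $qp\equiv 0 \pmod p$ gives $i_R d\eta = -\,d(i_R\eta)$, so that
\[
    0 = i_R\omega = i_R\omega_0 - d(i_R\eta),
\]
which shows that the closed $(q-1)$-form $i_R\omega_0$ is exact. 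This step is the crux: it is precisely where one circumvents the fact that, unlike in the case $q=1$, the operator $i_R$ does not annihilate exact forms.

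Finally I would extract a contradiction from exactness of $i_R\omega_0$ unless $\omega_0=0$. A direct computation gives
\[
    i_R\omega_0 = \sum_{|J'|=q-1}\ \sum_{a\notin J'} \pm\, c_{J'\cup\{a\}}\; x_a^{p}\, x_{J'}^{p-1}\, dx_{J'},
\]
and each summand is a $p$-th power $x_a^p$ times the representative $x_{J'}^{p-1}dx_{J'}$, so these represent distinct basis classes of $H^{q-1}(\Omega^\bullet_S)$ in degree $qp$ (equivalently $C(x_a^p x_{J'}^{p-1}dx_{J'}) = x_a\,dx_{J'}$). By the linear independence of these classes — which, for each fixed $J'$, reduces to the linear independence of the $x_a$ over $\field$ — exactness of $i_R\omega_0$ forces every $c_{J'\cup\{a\}}$, hence every $c_J$, to vanish. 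Therefore $\omega_0=0$ and $\omega = d\eta$ is exact. In summary, the whole difficulty concentrates in the boundary degree $e=qp$, where the obstruction to the Poincaré lemma survives and is removed only by combining the Euler identity on $\eta$ with the injectivity of the Cartier operator on cohomology.
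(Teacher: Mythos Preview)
Your argument is correct and follows the same line as the paper's: both invoke the Cartier description of $H^q(\Omega^\bullet_S)$ (you via K\"unneth, the paper by citing Brion--Kumar) to write $\omega=d\eta+\omega_0$ with $\omega_0$ built from the monomials $x_J^{p-1}dx_J$, and then use projectivity to force $\omega_0=0$ when $e\le qp$. Your handling of the boundary case $e=qp$ --- contracting with $R$, applying Euler to $\eta$, and checking via Cartier that the resulting classes in $H^{q-1}$ are independent --- actually supplies the justification that the paper's one-line assertion (``the projectiveness condition implies that the degree of the $a_I$'s is at least one'') leaves to the reader.
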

\begin{proof}
    Since $\omega$ is projective, Euler's relation \eqref{E:Euler} reads
    \[
        i_R d\omega = e \cdot \omega \, .
    \]
    If $\omega$ is closed, then the left-hand side vanishes and,
    consequently, the characteristic of $\field$ divides $e$ as claimed.

    If $\omega$ is closed then \cite[Theorem 1.3.4]{BrionKumar05} implies
    that
    \[
        \omega = d\beta + \sum_{\substack{I = (i_1, \ldots, i_q) \\ i_1 <
        \cdots < i_q}} a_I^p \,(x_{i_1}^{p-1}dx_{i_1}) \wedge
        (x_{i_2}^{p-1}dx_{i_2}) \wedge \cdots \wedge
        (x_{i_q}^{p-1}dx_{i_q})
    \]
    for a certain homogeneous $(q-1)$-form $\beta$ and some homogeneous
    polynomials $a_I \in \field[x_0,\ldots, x_n]$. The homogeneity of the
    $a_I$'s follows from that of $\omega$ and $\beta$, together with the
    identity $(b_1 + \cdots + b_k)^p = b_1^p + \cdots + b_k^p$.
    If $\omega$ is projective, closed and not exact, then the sum is
    non-zero, so some $a_I$ is non-zero. The projectiveness condition then
    forces $\deg(a_I) \ge 1$, and the corresponding summand has degree
    $p\,\deg(a_I) + qp \ge p(q+1)$. Hence $e \ge p(q+1)$, as wanted.
\end{proof}

\subsection{Foliations defined by closed projective polynomial \texorpdfstring{$1$}{1}-forms}
Lemma \ref{L:projectiveclosed} implies that there are no closed projective $1$-forms of degree $e$ when $p$ does not divide $e$. In sharp contrast, we have the following proposition. 

\begin{prop}\label{P:Closed}
    Let $\field$ be an algebraically closed field of characteristic $p>0$ and
    let $n \ge 3$ and $e \ge 1$ be integers. Consider the subscheme
    \[
        \Closed{pe -2}{n}{\field} := \mathbb P\!\left( \{ \omega \in H^0(\mathbb P^n_{\field}, \Omega^1_{\mathbb P^n_{\field}}(pe)) \, \vert \, d \omega =0 \text{ and } \codim \sing(\omega)\ge 2 \} \right)
    \]
    of $\mathbb P H^0(\mathbb P^n_{\field}, \Omega^1_{\mathbb P^n_{\field}}(pe))$.
    Then $\Closed{pe-2}{n}{\field}$ is non-empty and is an irreducible component
    of $\Fol{pe -2}{n}{\field}$. Moreover, $\Fol{pe-2}{n}{\field}$ is generically
    reduced at a general point of $\Closed{pe -2}{n}{\field}$.
\end{prop}
\begin{proof}
    Let $F \in \field[x_0, \ldots, x_n]$ be the homogeneous polynomial
    \[
        F = \sum_{i=1}^{n} x_{i-1} x_{i}^{pe -1}
    \]
    borrowed from \cite[Section 2]{MR2129679}. The differential $dF$ of $F$ defines a closed (hence integrable) polynomial projective $1$-form
    on $\mathbb A^{n+1}_{\field}$ showing that $\Closed{pe -2}{n}{\field}$ is non-empty. To establish the proposition we will compute the Zariski tangent space of $\Fol{pe -2}{n}{\field}$ at the foliation $\F$ determined by $dF$.

    Collecting coefficients, we can write
    \[
        dF = x_1^{pe-1}\,dx_0 + \sum_{j=1}^{n-1}\!\left(x_{j+1}^{pe-1}
             - x_{j-1}x_j^{pe-2}\right) dx_j - x_{n-1}x_n^{pe-2}\,dx_n .
    \] 
    At a zero of $dF$, the vanishing of the $dx_0$-coefficient forces $x_1=0$.
    Assuming inductively $x_1 = \cdots = x_j = 0$ for some $1\le j \le n-1$,
    the $dx_j$-coefficient reduces to $x_{j+1}^{pe-1}$, forcing $x_{j+1}=0$.
    Thus $dF$ vanishes only along the line $\{x_1 = \cdots = x_n = 0\}$
    through the origin of $\mathbb A^{n+1}_{\field}$, and the foliation $\F$
    determined by $dF$ has singular set of codimension $n$. In particular,
    $\F$ is a closed point of $\Closed{pe-2}{n}{\field}$, which is thus
    non-empty.

    The Zariski tangent space of $\Fol{pe-2}{n}{\field}$ at $\F$ can be
    identified with the vector space
    \[
        \frac{ \{ \eta \in H^0(\mathbb P^n_{\field}, \Omega^1_{\mathbb P^n_{\field}}(pe)) \, \vert \, dF \wedge d \eta = 0\} }{\field \cdot dF } \, .
    \]
    If $\eta$ is a polynomial projective $1$-form of degree $pe$ satisfying
    $dF \wedge d\eta=0$, then since $\codim \sing(dF) =n \ge 3$, de
    Rham-Saito's lemma \cite{MR413155} implies that $d \eta =dF \wedge \beta$
    for some polynomial form $\beta$. Comparing degrees, we deduce that
    $d \eta =0$. It follows that the Zariski tangent space of
    $\Fol{pe-2}{n}{\field}$ at $\F$ coincides with the Zariski tangent space
    of $\Closed{pe-2}{n}{\field}$ at $\F$. Since $\Closed{pe-2}{n}{\field}$ is
    defined by linear equations on the coefficients of $\omega$ and is
    contained in the Zariski closure of $\Fol{pe-2}{n}{\field}$, we conclude
    that $\Closed{pe-2}{n}{\field}$ is an irreducible component of
    $\Fol{pe-2}{n}{\field}$ and that $\Fol{pe-2}{n}{\field}$ is generically
    reduced at $\F$.
\end{proof}

\subsection{Locally decomposable differential forms with linear coefficients}

Below, we recall Medeiros' description of linear locally decomposable and integrable differential forms.

\begin{thm}\label{T:Medeiros}
    Let $n \ge 3$ and $1 \le q \le n-1$ be integers, and let $\omega$ be a homogeneous locally decomposable $q$-form on $\mathbb{A}_{\field}^{n+1}$. If the coefficients of $\omega$ are linear then there exists a linear change of coordinates such that
    \begin{enumerate}[label=(\alph*)]
        \item\label{I:quadratico} $\omega = \alpha \wedge dx_1\wedge \cdots \wedge dx_{q-1}$ for some linear $1$-form $\alpha$ (when $q=1$ this reads simply $\omega = \alpha$); or
        \item\label{I:pullback}  $\omega$ can be written as
        \[
            \sum_{i=0}^q A_i \, dx_0 \wedge \cdots \wedge \widehat{dx_i} \wedge \cdots \wedge dx_{q} \, .
        \]
        for suitable linear polynomials $A_i$. 
    \end{enumerate}
    Moreover, if $\omega$ is integrable then in Case \ref{I:quadratico} $\alpha = df$ for some quadratic polynomial $ f \in \field [x_0, \ldots, x_n]$ and, in Case \ref{I:pullback}, the polynomials $A_i$ belong to $\field [ x_0, \ldots, x_{q}]$. 
\end{thm}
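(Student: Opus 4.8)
The plan is to extract the two normal forms from the geometry of the Gauss map of $\omega$ and then to impose integrability separately in each case. I would set $V = \field^{n+1}$, so that $dx_0, \dots, dx_n$ is a basis of $V^*$ and a homogeneous $q$-form with linear coefficients is an element $\omega \in V^* \otimes \bigwedge^q V^*$. Evaluating at $x \in V$ yields $\omega(x) \in \bigwedge^q V^*$, and local decomposability in its Plücker form $i_v \omega \wedge \omega = 0$ says precisely that every $\omega(x)$ is a decomposable $q$-vector. Hence $x \mapsto [\omega(x)]$ is a linear map $\mathbb P(V) \dashrightarrow \mathbb P(\bigwedge^q V^*)$ whose image $\Lambda$ is a linear subspace contained in the Grassmannian $\mathrm{Gr}(q, V^*)$ in its Plücker embedding. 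I would then invoke the classification of linear subspaces of a Grassmannian: $\Lambda$ is contained in a maximal one, which is either $\{P : A \subseteq P\}$ for a fixed $(q-1)$-plane $A \subseteq V^*$, or $\{P : P \subseteq B\}$ for a fixed $(q+1)$-plane $B \subseteq V^*$. Normalizing coordinates so that $A = \langle dx_1, \dots, dx_{q-1}\rangle$ (respectively $B = \langle dx_0, \dots, dx_q\rangle$) and using linearity of the coefficients gives case \ref{I:quadratico} (respectively \ref{I:pullback}).

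The one characteristic-sensitive point in this dichotomy is the classification itself, whose textbook proof detects decomposability through $\eta \wedge \eta$; this is inadmissible here, since $\eta \wedge \eta = 0$ is strictly weaker than decomposability in characteristic two, as the remark following Plücker's equations already illustrates. Instead I would test decomposability of $\eta \in \bigwedge^q V^*$ by requiring the contraction $V \to \bigwedge^{q-1} V^*$, $\xi \mapsto i_\xi \eta$, to have rank at most $q$, a criterion valid over any field and consonant with the paper's Plücker formulation. With this substitute the key lemma — that a line of $\mathrm{Gr}(q,V^*)$ joins two $q$-planes meeting in codimension one — and the bootstrap to the two-type dichotomy are purely linear-algebraic and characteristic-free.

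For the integrability refinement in case \ref{I:pullback} I would write $\omega = i_\zeta(dx_0 \wedge \cdots \wedge dx_q)$ with $\zeta = \sum_{i=0}^q A_i \partial_{x_i}$, so that the associated distribution $\mathcal D$ has $T_{\mathcal D} = \langle \zeta, \partial_{x_{q+1}}, \dots, \partial_{x_n}\rangle$. Since integrability is equivalent to involutivity of $T_{\mathcal D}$, the brackets $[\partial_{x_k}, \zeta] = \sum_{i=0}^q (\partial A_i / \partial x_k)\, \partial_{x_i}$ with $k > q$ must lie in $T_{\mathcal D}$; as $\partial_{x_0}, \dots, \partial_{x_q}$ enter $T_{\mathcal D}$ only through $\zeta$, each such bracket is forced to be proportional to $\zeta$, and the proportionality factor must vanish unless all the $A_i$ are proportional. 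That degenerate possibility makes $\omega$ a linear form times a constant decomposable $q$-form, hence already of type \ref{I:quadratico}, and is discarded; what remains is $\partial A_i / \partial x_k = 0$ for $k > q$, i.e. $A_i \in \field[x_0, \dots, x_q]$. This step is insensitive to the characteristic.

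Finally, in case \ref{I:quadratico}, writing $\mu = dx_1 \wedge \cdots \wedge dx_{q-1}$ and $\omega = \alpha \wedge \mu$, integrability becomes $d\alpha \wedge \alpha \wedge \mu = 0$. Using the freedom $\alpha \mapsto \alpha + \sum_j \lambda_j dx_j$ to clear the $dx_1, \dots, dx_{q-1}$ components of $\alpha$, and writing the constant $2$-form $d\alpha$ as $\beta + \gamma$ with $\beta \in \bigwedge^2 \langle dx_i : i \notin \{1,\dots,q-1\}\rangle$ and $\gamma$ in the ideal generated by $dx_1, \dots, dx_{q-1}$, the condition collapses to $\alpha \wedge \beta = 0$. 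A rank analysis of $\beta$ then forces $\beta = 0$ — rank at least four would give $\alpha = 0$, while rank two would place $\omega$ back in case \ref{I:pullback} — after which a coordinate change preserving $A$ makes $\alpha$ closed. The last step, promoting closed to exact with a quadratic primitive $f$, is where I expect the real difficulty: a closed homogeneous $1$-form of degree two is automatically exact when $\mathrm{char}\,\field \neq 2$, because its non-exact Frobenius part $\sum g_i^p x_i^{p-1} dx_i$ has degree at least $p \ge 3$, but in characteristic two the forms $x_i dx_i$ are closed and not exact, and genuine non-exact integrable examples of type \ref{I:quadratico} do occur. Reconciling closedness with exactness in characteristic two — or, failing that, reading the conclusion of \ref{I:quadratico} as ``$\alpha$ is closed'', which coincides with $\alpha = df$ exactly when $p \ne 2$ — is the main obstacle, and the point at which this arithmetic argument diverges most from Medeiros' original complex-analytic one.
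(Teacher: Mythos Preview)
Your route is genuinely different from the paper's. The paper does not reprove the dichotomy at all: it asserts that Medeiros' original argument goes through over any algebraically closed field except for one analytic step (his Proposition~1.3.1), and replaces that step by a short lemma (Lemma~\ref{L:dec}). That lemma bypasses Grassmannian geometry entirely: when $\omega$ is integrable and $d\omega\neq 0$, one observes that $d\omega$ is a nonzero \emph{constant} decomposable $(q+1)$-form (locally $d\omega=\beta\wedge\omega$), normalizes $d\omega=dx_0\wedge\cdots\wedge dx_q$, and then the condition $i_v\omega\wedge d\omega=0$ forces the shape~\ref{I:pullback}; recomputing $d\omega$ pins the $A_i$ to $\field[x_0,\dots,x_q]$. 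This handles both the normal form and the integrability refinement of case~\ref{I:pullback} simultaneously and is shorter than your involutivity argument. Your Gauss-map approach, by contrast, separates the (a)/(b) dichotomy from integrability, which is conceptually cleaner but requires the classification of linear subspaces of Grassmannians as an external input—an input you rightly flag as needing care when $p=2$.

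Your worry about characteristic two is on target, and the paper's proof does not address it either: both routes, in case~\ref{I:quadratico}, reach only ``$\alpha$ is closed'' and then invoke exactness of closed degree-two $1$-forms, which fails when $p=2$. In fact the refinement as literally stated appears to break there: for $q=2$ over a field of characteristic two, the form $\omega=(x_0\,dx_0+x_2\,dx_2+x_3\,dx_3)\wedge dx_1$ on $\mathbb A^4_\field$ is closed (hence integrable), its Gauss image is a plane of $2$-planes through $dx_1$ whose union spans a $4$-plane (so it cannot be put in form~\ref{I:pullback}), and no representative $\alpha$ modulo $dx_1$ is exact since the coefficient of $dx_0$ would have to satisfy $\partial f/\partial x_0=x_0$. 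The paper's applications (Theorems~\ref{T:deg0} and~\ref{T:deg1}) always impose the additional hypothesis $i_R\omega=0$, which rules out such examples, so the downstream results are unaffected even if the correct conclusion in case~\ref{I:quadratico} is ``$\alpha$ is closed'' rather than ``$\alpha=df$'' when $p=2$.
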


The original statement was formulated over the field $\C$. However, as we proceed to verify, the result remains valid over any algebraically closed field $\field$. The key modifications to Medeiros' argument are presented in the following result.

\begin{lemma} \label{L:dec} 
    Let $\omega$ be a linear locally decomposable and integrable $q$-form on $\mathbb{A}_{\field}^{n+1}$. If $\omega$ is  non-closed,  then there is a linear change of coordinates such that 
    \[
        \omega = \sum_{i=0}^{q}A_idx_{0}\wedge \cdots \wedge \widehat{dx_i} \wedge \cdots \wedge dx_{q} \, ,
    \]
    where $A_{i}$ are linear forms in $\field[x_0,\ldots,x_{q}]$ for all $i$.
\end{lemma}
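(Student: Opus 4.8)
The plan is to first normalize $d\omega$ and then read off the shape of $\omega$ from the integrability condition. Since $\omega$ has linear coefficients, $d\omega$ is a $(q+1)$-form with \emph{constant} coefficients, and it is nonzero precisely because $\omega$ is non-closed. The first and main step is to show that $d\omega$ is decomposable. I would work at a closed point $x$ outside the zero locus of $\omega$ (a dense open set, as $\omega \neq 0$), where local decomposability provides germs of $1$-forms with $\omega = \omega_1 \wedge \cdots \wedge \omega_q$ and $\omega_1(x), \ldots, \omega_q(x)$ linearly independent. The integrability condition $i_v \omega \wedge d\omega = 0$ is equivalent to $d\omega_j \wedge \omega_1 \wedge \cdots \wedge \omega_q = 0$ for every $j$, so the local de Rham division lemma \cite{MR413155} yields $d\omega_j = \sum_k \eta_{jk} \wedge \omega_k$ for germs of $1$-forms $\eta_{jk}$. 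Expanding $d\omega = \sum_j (-1)^{j-1}\omega_1 \wedge \cdots \wedge d\omega_j \wedge \cdots \wedge \omega_q$ and discarding every summand with a repeated factor, this collapses to $d\omega = \eta \wedge \omega = \eta \wedge \omega_1 \wedge \cdots \wedge \omega_q$ for a single germ of $1$-form $\eta$. In particular the covector $d\omega(x)$ is decomposable, and since $d\omega$ has constant coefficients this forces $d\omega$ to be a decomposable constant form. Choosing linear coordinates dual to a basis completing its $q+1$ independent factors, I may assume $d\omega = dx_0 \wedge \cdots \wedge dx_q$.

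With $d\omega = dx_0 \wedge \cdots \wedge dx_q$ fixed, the rest is purely algebraic. Writing $\omega = \sum_{|I| = q} a_I \, dx_I$ with linear $a_I$, I would feed into the integrability condition the decomposable $(q-1)$-vectors $v = \bigwedge_{i \in I_0 \setminus \{m\}} \partial_{x_i}$ attached to any multi-index $I_0$ containing an index $m > q$. Then $i_v \omega$ is a $1$-form whose $dx_m$-coefficient equals $\pm a_{I_0}$, since the only multi-index contributing to $dx_m$ under this contraction is $I_0$ itself; meanwhile $i_v \omega \wedge d\omega = 0$ forces every $dx_l$-coefficient of $i_v\omega$ with $l > q$ to vanish. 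Hence $a_{I_0} = 0$, so $\omega$ involves only the differentials $dx_0, \ldots, dx_q$, i.e. $\omega = \sum_{i=0}^q a_i \, dx_0 \wedge \cdots \wedge \widehat{dx_i} \wedge \cdots \wedge dx_q$. Finally, recomputing $d\omega$ from this expression, the coefficient of $dx_{\{0,\ldots,q,k\}\setminus\{i\}}$ for $k > q$ is $\pm \partial a_i/\partial x_k$; matching against $d\omega = dx_0 \wedge \cdots \wedge dx_q$ forces all these partials to vanish, so each $a_i$ lies in $\field[x_0, \ldots, x_q]$, which is exactly the claimed normal form.

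The hard part is the decomposability of $d\omega$ in the first step. Over $\C$ one is tempted to produce it from local first integrals via Frobenius, but that tool is unavailable in characteristic $p$; the virtue of the argument above is that it is entirely algebraic and, crucially, uses only polynomial division in the local ring rather than any division by integers, so it survives in every characteristic. The one point demanding care is the passage from ``$d\omega(x)$ is a decomposable covector at a single generic point'' to ``$d\omega$ is a decomposable form'', which is legitimate exactly because $d\omega$ has constant coefficients. Everything following the normalization of $d\omega$ is coefficient bookkeeping and introduces no restriction on $p$, consistent with the characteristic-free statement of the lemma.
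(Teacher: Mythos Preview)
Your proof is correct and follows essentially the same route as the paper's: show $d\omega$ is a nonzero decomposable constant $(q+1)$-form, normalize it to $dx_0\wedge\cdots\wedge dx_q$, use the integrability condition $i_v\omega\wedge d\omega=0$ to force $\omega$ into the span of the $dx_0\wedge\cdots\wedge\widehat{dx_i}\wedge\cdots\wedge dx_q$, and then compare $d\omega$ with $dx_0\wedge\cdots\wedge dx_q$ to push the coefficients into $\field[x_0,\ldots,x_q]$. The only difference is that the paper simply asserts ``since $\omega$ is integrable, $d\omega$ must be decomposable'', whereas you supply the argument (local factorization $\omega=\omega_1\wedge\cdots\wedge\omega_q$, de Rham--Saito division, and the constant-coefficient trick) that justifies this step in arbitrary characteristic; your explicit choice of test vectors $v$ likewise spells out what the paper leaves implicit.
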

\begin{proof}
    Since $\omega$ is integrable, it is locally decomposable by definition and 
    we can write
    \[
        \omega=\omega_1\wedge\cdots\wedge\omega_q 
    \]
    with $\omega_1, \ldots, \omega_q$ suitable rational $1$-forms. Moreover, 
    the integrability of $\omega$ implies that
    \[
        d\omega_j=\sum_{\ell=1}^q \theta_{j\ell}\wedge \omega_\ell
    \]
    for suitable rational $1$-forms $\theta_{j\ell}$. Therefore
    \[
        d\omega
        =\sum_{j=1}^q (-1)^{j-1}\omega_1\wedge\cdots\wedge d\omega_j\wedge\cdots\wedge \omega_q
        =\theta\wedge\omega
    \]
    for some rational $1$-form $\theta$. Hence $d\omega$ is a decomposable $(q+1)$-form. Moreover, after a suitable linear change of coordinates, decomposable $(q+1)$-forms with coefficients of degree zero can be written as $dx_{0}\wedge \cdots \wedge dx_{q}$.

    The integrability condition $i_v\omega \wedge d\omega =0$, for any $(q-1)$-vector $v$ implies that 
    \[
        \omega = \sum_{i=0}^{q}A_idx_{0}\wedge \cdots \wedge \widehat{dx_i} \wedge \cdots \wedge dx_{q}
    \]
    where $A_i$ are linear homogeneous polynomials in $\field[x_0,\ldots,x_n]$. Furthermore, since $d \omega = dx_0 \wedge \cdots \wedge dx_q$ by assumption, computing the exterior derivative of $\omega$ in the above form implies that the linear homogeneous polynomials $A_i$ actually belong to $\field[x_0,\ldots,x_q]$ as wanted.
\end{proof}

\begin{proof}[Proof of Theorem \ref{T:Medeiros}]
    Medeiros' argument remains valid over an arbitrary algebraically closed field, except for his use of \cite[Proposition 1.3.1]{MR1842027}, which relies on analytic methods. However, this can be replaced by Lemma \ref{L:dec} above.
\end{proof}

\subsection{Foliations of degree zero} We now have all the necessary ingredients to state and prove  the classification of degree zero foliations in arbitrary characteristic. 

\begin{thm}\label{T:deg0}
    Let $\field$ be an algebraically closed field. Let $n \ge 3$ and $n-1\ge q \ge 1$ be integers. If the characteristic of $\field$ is different from $2$, or if $q>1$,  then $\Fol[q]{0}{n}{\field}$ is isomorphic to the Grassmannian of $(q+1)$-codimensional linear subspaces of $\mathbb P^n_{\field}$, and  foliations
    of degree $0$ are defined by linear projections $\mathbb P^n_{\field} \dashrightarrow \mathbb P^q_{\field}$. If instead the characteristic
    of $\field$ is  $2$ and $q=1$, then $\Fol{0}{n}{\field} = \mathbb P H^0(\mathbb P^n_{\field}, \Omega^1_{\mathbb P^n_{\field}}(2)) = \Closed{0}{n}{\field}$.
    In all cases, $\Fol[q]{0}{n}{\field}$ (with its reduced scheme structure) is a smooth and irreducible algebraic variety.
\end{thm}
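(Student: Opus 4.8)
The plan is to classify degree zero codimension $q$ foliations through their defining forms and Medeiros' normal form. A point of $\Fol[q]{0}{n}{\field}$ is a class $[\omega]$ with $\omega$ a projective $q$-form of degree $q+1$ — that is, a locally decomposable and integrable $q$-form with linear coefficients, annihilated by $R$, and with $\codim\sing(\omega)\ge 2$. First I would apply Theorem \ref{T:Medeiros} to such an $\omega$: after a linear change of coordinates it is either of type (a), $\omega = df\wedge dx_1\wedge\cdots\wedge dx_{q-1}$ with $f$ quadratic, or of type (b), $\omega=\sum_{i=0}^q A_i\,dx_0\wedge\cdots\wedge\widehat{dx_i}\wedge\cdots\wedge dx_q$ with the $A_i$ linear forms in $x_0,\dots,x_q$. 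The whole proof then reduces to extracting the projectiveness and codimension constraints in each case.

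For type (b) the form involves only $x_0,\dots,x_q$, so it is the pullback of a $q$-form $\eta$ in these variables under a coordinate projection. Writing $R'=\sum_{i=0}^q x_i\partial_i$ one checks $i_R\omega=i_{R'}\eta$, so projectiveness of $\omega$ is projectiveness of $\eta$ on $\mathbb P^q_\field$; since $\Omega^q_{\mathbb P^q_\field}(q+1)\cong\mathcal O_{\mathbb P^q_\field}$ gives $H^0(\mathbb P^q_\field,\Omega^q_{\mathbb P^q_\field}(q+1))\cong\field$, the space of such $\eta$ is one dimensional and $\omega$ must be a nonzero scalar multiple of $i_R(dL_0\wedge\cdots\wedge dL_q)$ for independent linear forms $L_0,\dots,L_q$. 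This is exactly the $q$-form of the linear projection $\mathbb P^n_\field\dashrightarrow\mathbb P^q_\field$ with center $\{L_0=\cdots=L_q=0\}$, a codimension $q+1$ linear subspace; conversely every such projection defines a degree zero foliation, and the span $\langle L_0,\dots,L_q\rangle$ is recovered from $\omega$. This sets up the bijection with the Grassmannian.

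For type (a) the form $\omega=df\wedge dx_1\wedge\cdots\wedge dx_{q-1}$ is a product of closed forms, hence closed, so Lemma \ref{L:projectiveclosed} already forces $p\mid q+1$. When $q\ge 2$ I would expand $i_R\omega = 2f\,\Lambda - df\wedge i_R\Lambda$ with $\Lambda=dx_1\wedge\cdots\wedge dx_{q-1}$, and separate the result by type: the purely internal part gives $(2f-\sum_{j=1}^{q-1}x_j\partial_j f)\Lambda$, while the mixed part is $\sum_{i}\partial_i f\,(dx_i\wedge i_R\Lambda)$ over the indices $i\notin\{1,\dots,q-1\}$; since the forms $dx_i\wedge i_R\Lambda$ are nonzero and independent, vanishing of $i_R\omega$ forces $\partial_i f=0$ for every such $i$, whence $df\in\langle dx_1,\dots,dx_{q-1}\rangle$ and $\omega=0$. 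Thus type (a) contributes nothing new for $q\ge 2$, and only the projections survive. For $q=1$ one has $\omega=df$ with $i_R\,df=2f$, so $f=0$ when $p\ne 2$, again leaving only projections; but when $p=2$ every quadratic $f$ is admissible. I would treat this exceptional case directly: a projective $1$-form of degree $2$ in characteristic two has coefficient matrix symmetric with zero diagonal, which is automatically closed (so integrable) and has rank at least $2$ (a nonzero symmetric matrix with zero diagonal cannot have rank one), giving $\codim\sing\ge 2$ for free; hence $\Fol{0}{n}{\field}=\mathbb P H^0(\mathbb P^n_\field,\Omega^1_{\mathbb P^n_\field}(2))=\Closed{0}{n}{\field}$.

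Finally I would globalize. Assembling type (b) gives the morphism $\mathrm{Gr}\to\mathbb P H^0(\mathbb P^n_\field,\Omega^q_{\mathbb P^n_\field}(q+1))$ sending $\langle L_0,\dots,L_q\rangle$ to $[\,i_R(dL_0\wedge\cdots\wedge dL_q)\,]$; recognizing it as the Plücker embedding followed by the linear map $\beta\mapsto i_R\beta$ on constant $(q+1)$-forms, which is injective in every characteristic (the coefficients of $i_R\beta$ recover those of $\beta$ one index at a time), shows the composite is a closed immersion. Its image is $\Fol[q]{0}{n}{\field}$ set-theoretically by the case analysis above, so the reduced scheme is isomorphic to the Grassmannian, and smoothness and irreducibility follow from those of the Grassmannian; in the exceptional case the scheme is simply the projective space $\mathbb P H^0(\mathbb P^n_\field,\Omega^1_{\mathbb P^n_\field}(2))$. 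The delicate point, and the reason the characteristic-free Medeiros theorem is indispensable, is the regime $p\mid q+1$: there the Euler/closed-form correspondence of Section \ref{SS:Euler} degenerates and type (a) forms genuinely appear, so the heart of the argument is the multilinear computation showing these forms vanish for $q\ge 2$ while producing the extra component precisely when $q=1$ and $p=2$.
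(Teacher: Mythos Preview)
Your proof is correct and follows the same strategy as the paper: apply Medeiros' normal form and eliminate case~(a) for $q\ge 2$ via the projectiveness constraint $i_R\omega=0$, then identify case~(b) with linear projections. You are in fact more careful than the paper in two spots: your direct expansion of $i_R\omega$ in case~(a) avoids the paper's unexplained reduction to $f\in\field[x_q,\dots,x_n]$, and in the exceptional case $p=2$, $q=1$ your rank argument (a nonzero symmetric matrix with zero diagonal has rank $\ge 2$) explicitly verifies the condition $\codim\sing(\omega)\ge 2$ needed for $\Fol{0}{n}{\field}$ to equal all of $\mathbb P H^0(\mathbb P^n_\field,\Omega^1_{\mathbb P^n_\field}(2))$, a point the paper passes over in silence.
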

\begin{proof}
    Let $\mathcal F$ be a foliation of degree zero, and let $\omega \in H^0(\mathbb P^n_{\field}, \Omega_{\mathbb P^n_{\field}}^q(q+1))$ be a projective $q$-form of degree $q+1$ defining it. 
    We analyze the two cases predicted by Theorem \ref{T:Medeiros}. 
    
    If $\omega$ falls in  Case \ref{I:quadratico} of Theorem \ref{T:Medeiros}, \ie $\omega = df \wedge dx_1 \wedge \cdots \wedge dx_{q-1}$, we can and will assume that $f \in \field[x_q, \ldots, x_n]$. Then, expanding the projectiveness  condition $i_R \omega=0$, we obtain:
    \begin{enumerate}
        \item $i_R df = 0$; and
        \item $i_R ( dx_1 \wedge \cdots \wedge dx_{q-1} )=0$.
    \end{enumerate}
    Since $f$ is quadratic, the first condition implies that the characteristic of $\field$ is $2$. The second condition forces $q=1$, that is $\omega= df$.

    If instead $\omega$ fits in Case \ref{I:pullback} of Theorem \ref{T:Medeiros}, that is
    \[ 
             \sum_{i=0}^q A_i(x_0, \ldots, x_q) dx_0 \wedge \cdots \wedge \widehat{dx_i} \wedge \cdots \wedge dx_q \, ,
    \]
    then  the projectiveness  condition $i_R \omega=0$, together  with the exactness of the Koszul complex, implies that $\omega$ is a scalar multiple of  $i_R(dx_0 \wedge \cdots \wedge dx_q)$.  Consequently, the foliation $\F$ is defined by the kernel of the differential of the linear projection 
    \begin{align*}
        \mathbb P^n_{\field} &\dashrightarrow \mathbb P^q_{\field} \\ 
        (x_0: \ldots: x_n) & \mapsto (x_0: \ldots: x_q) \, .
    \end{align*}
    As such, $\F$ is completely characterized by the indeterminacy locus of this linear projection, which is  the linear subspace $\{ x_{0} = \cdots = x_q =0 \}$ of codimension $q+1$.
    This proves the result when $q\neq 1$ or the characteristic of $\field$
    is different from~$2$.  It remains to treat the case $q=1$ and
    $\mathrm{char}(\field)=2$.

    We first show that $\mathbb{P}H^0(\mathbb{P}^n_{\field},
    \Omega^1_{\mathbb{P}^n_{\field}}(2))= \Closed{0}{n}{\field}$, i.e.,
    that every projective $1$-form of degree~$2$ is closed in
    characteristic~$2$.  Write $\omega=\sum_i a_i\,dx_i$ with
    $a_i=\sum_j A_{ij}x_j$ linear.  The projectivity condition
    $i_R\omega=\sum_i x_i a_i=0$ gives $A_{ii}=0$ for all $i$, and
    $A_{ij}+A_{ji}=0$ for $i\neq j$.  In characteristic~$2$ the latter
    reads $A_{ij}=A_{ji}$, so
    \[
       d\omega = \sum_{i<j}(A_{ij}+A_{ji})\,dx_i\wedge dx_j = 0 \,.
    \]
    Thus $\Fol{0}{n}{\field}\subseteq
    \mathbb{P}H^0(\mathbb{P}^n_{\field},\Omega^1_{\mathbb{P}^n_{\field}}(2))
    =\Closed{0}{n}{\field}\subseteq\Fol{0}{n}{\field}$,
    and all three coincide.

    Moreover, every $\omega\in\Fol{0}{n}{\field}$ is exact: in
    Case~\ref{I:quadratico}, the argument above gives $\omega=df$; in
    Case~\ref{I:pullback}, $\omega$ is a scalar multiple of
    $i_R(dx_0\wedge dx_1)=x_0\,dx_1-x_1\,dx_0$, which in characteristic~$2$
    equals $x_0\,dx_1+x_1\,dx_0=d(x_0x_1)$.
    This completes the proof.
\end{proof}

\begin{remark}
    Our proof does not explicitly address the scheme structure of $\Fol[q]{0}{n}{\field}$. However, the proof of Proposition \ref{P:Closed}  shows that in the exceptional case where $q=1$ and the characteristic of $\field$ is $2$, the scheme $\Fol{0}{n}{\field}$ is generically reduced. In all other cases, we believe that $\Fol[q]{0}{n}{\field}$ is  reduced everywhere. An attentive reader will notice that when $q=1$ and the characteristic is different from $2$, this follows from the proof of Proposition \ref{P:Log1} below. Moreover, it is very likely that the results and arguments of \cite{MR2590385} can be adapted to prove the reducedness of $\Fol[q]{0}{n}{\field}$ when $q>1$.
\end{remark}

\section{Foliations of degree one}

In this section, we focus on foliations of degree one on projective varieties over fields of arbitrary characteristic. Unlike Section \ref{S:degreezero}, where we examined foliations of arbitrary codimension, here we restrict our attention to codimension one foliations.

After reviewing key properties of codimension one foliations in positive characteristic, following \cite{MR4803684}, we introduce two families of irreducible components of the space of foliations: linear pull-back components and logarithmic components with only two distinct residues. These families contain special members of degree one.

We conclude the section with a detailed description of codimension one foliations of degree one in positive characteristic, building on the previous results.

\subsection{The $p$-curvature and its degeneracy divisor for codimension one foliations} 
The main reference for the definitions and results discussed in this subsection is \cite{MR4803684}. 
Let $X$ be a smooth projective variety defined
over an algebraically closed field $\field$ of characteristic $p>0$. Denote by $\Frob: X \to X$ the absolute Frobenius morphism. If $\F$ is a  foliation on $X$, its $p$-curvature  is the morphism of $\mathcal O_X$-modules given by raising vector fields to theirs $p$-th powers:
\begin{align*}
    \pcurvature{\F} : \Frob^* T_{\F} & \longrightarrow N_{\F} \\
    \sum f_i \otimes v_i & \mapsto \sum f_i v_i^p \mod T_{\F} \, .
\end{align*}

A foliation $\F$ is called $p$-closed if $\pcurvature{\F}$ is identically zero. When $\pcurvature{\F}$ is generically surjective, meaning that the image of $\pcurvature{\F}$ has rank equal to the codimension of $\F$,  we say that $\F$ is $p$-dense.

When a foliation $\F$ has codimension one, then it is either $p$-closed or $p$-dense. In the  $p$-dense case, there exist a uniquely determined effective divisor $\pdegeneracy{\F}$ and a uniquely determined codimension two subfoliation $\pkernel{\F} \subset \F$ such that:
\begin{enumerate}
    \item Away from a subset of codimension at least two, the image of $\pcurvature{\F}$ coincides with $N_{\F} \otimes \mathcal O_X(-\pdegeneracy{\F})$; and
    \item The kernel of the $\pcurvature{\F}$ is equal to $\Frob^*T_{\pkernel{\F}}$.
\end{enumerate}

When $X = \mathbb P^n_{\field}$, the degrees of $\F$, $\pkernel{\F}$, and $\pdegeneracy{\F}$ satisfy the relation
\begin{equation}\label{E:pdivisor}
    \deg(\pdegeneracy{\F}) = p(\deg(\F) - \deg(\pkernel{\F}) - 1) + \deg(\F) + 2
\end{equation}
which is a specialization of \cite[Proposition 4.12]{MR4803684}.

\begin{ex}
    Every foliation on $\mathbb P^n_{\field}$ of degree zero is $p$-closed. Indeed, the codimension one case  follows from Equation (\ref{E:pdivisor}) and in higher codimension this follows from Theorem \ref{T:deg0}.
\end{ex}

\subsection{Linear pull-backs}\label{SS:Linear}

Let $\Lin{d}{n}{\field} \subset \Fol{d}{n}{\field}$ denote the reduced subscheme whose closed points correspond to  foliations on $\mathbb P^n_{\field}$  that arise as pullbacks of degree $d$ foliations on $\mathbb P^2_{\field}$ under a linear projection $\pi : \mathbb P^n_{\field} \dashrightarrow \mathbb P^2_{\field}$. It is well-known that for every $d\ge 0$ and  $n \ge 3$, $\Lin{d}{n}{\mathbb C}$ is an irreducible component of $\Fol{d}{n}{\mathbb C}$. Observe that $\Lin{0}{n}{\field}$ corresponds to foliations on $\mathbb P^n_{\field}$ that are defined by a pencil of hyperplanes, hence have singular set of codimension two. As a corollary of Theorem \ref{T:deg0} we obtain that $\Lin{0}{n}{\field}$ is an irreducible component of $\Fol{0}{n}{\field}$ if, and only if, the characteristic of $\field$ is different from two.  Indeed, when the  characteristic of $\field$ is two, a general element of $\Closed{0}{n}{\field}$ has empty singular set when $n$ is odd or a unique singularity when $n$ is even. In both cases, $\Lin{0}{n}{\field}$ cannot be equal to 
$\Closed{0}{n}{\field}$.
 
\begin{prop}\label{P:linearpullback}
    Let $d > 0$. If $\field$ has characteristic $p>0$, then $\Lin{d}{n}{\field}$ is an irreducible component of $\Fol{d}{n}{\field}$ if and only if $d  < p$.
\end{prop}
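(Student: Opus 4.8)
The plan is to study the foliation $\F$ obtained as a linear pullback of a general degree $d$ foliation $\G$ on $\mathbb P^2_{\field}$ via a projection $\pi\colon \mathbb P^n_{\field}\dashrightarrow \mathbb P^2_{\field}$, and to decide when $\F$ is rigid enough to span an irreducible component. The natural strategy is to compute the Zariski tangent space $T_{[\F]}\Fol{d}{n}{\field}$ and compare its dimension with $\dim \Lin{d}{n}{\field}$. Since $\Lin{d}{n}{\field}$ is irreducible (being the image of the irreducible parameter space consisting of a choice of projection together with a degree $d$ foliation on $\mathbb P^2$), it is an irreducible component precisely when, at a general point, the tangent space to $\Fol{d}{n}{\field}$ has the same dimension as $\Lin{d}{n}{\field}$ itself. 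So the whole proposition reduces to an infinitesimal deformation computation whose answer must flip exactly at the threshold $d = p$.

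First I would set up the defining $1$-form. Choosing coordinates so that $\pi(x_0\colon\cdots\colon x_n) = (x_0\colon x_1\colon x_2)$, the pullback $\F$ is defined by a projective $1$-form $\omega = \pi^*\eta$ where $\eta$ is a degree $d$ projective $1$-form on $\mathbb P^2_{\field}$; crucially $\omega$ involves only $dx_0, dx_1, dx_2$ with coefficients in $\field[x_0,x_1,x_2]$. Next I would analyze the integrability condition linearized at $\omega$, i.e. describe $\{\eta' \mid \omega\wedge d\eta' + \eta'\wedge d\omega = 0\}/\field\cdot\omega$, the tangent space. The tangent vectors split into those that keep the projection structure (deforming $\eta$ on $\mathbb P^2$ and moving the projection $\pi$, which together give the tangent space to $\Lin{d}{n}{\field}$) and possible extra directions. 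The extra directions are the obstruction: I expect them to be governed by a de Rham–Saito / division-type argument, and the characteristic $p$ enters through the failure of such division or through the $p$-curvature machinery of Section 4.1.

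I anticipate the real content, and the main obstacle, to lie in pinning down when the extra tangent directions vanish, and here I would try to exploit the $p$-curvature. A linear pullback is $p$-closed (its leaves are contained in the fibers-times-$\mathbb P^2$ picture), and the relation \eqref{E:pdivisor} relating $\deg(\pdegeneracy{\F})$, $\deg(\F)$, and $\deg(\pkernel{\F})$ should control exactly when a first-order deformation is forced to remain a pullback versus when it can become $p$-dense. Substituting $\deg(\F)=d$ and the degree of the generic subfoliation into that formula, the condition $\deg(\pdegeneracy{\F})\ge 0$ — i.e. the existence of a genuine degeneracy divisor rather than its degeneration — ought to translate into the numerical inequality $d\le p$. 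Thus the strategy is: (i) show that for $d\le p$ every integrable first-order deformation of $\F$ is tangent to $\Lin{d}{n}{\field}$, so $\Lin{d}{n}{\field}$ is a component; and (ii) for $d > p$ exhibit an explicit deformation of the pullback (for instance a logarithmic or $p$-dense deformation permitted once $p$ divides a relevant degree) that leaves $\Lin{d}{n}{\field}$, showing it is not a component. The delicate step is the division argument controlling the tangent space, where I would invoke de Rham–Saito's lemma \cite{MR413155} exactly as in the proof of Proposition \ref{P:Closed}, taking care that the required codimension-of-singularities hypothesis holds for a general pullback and tracking where divisibility by $p$ obstructs the vanishing $d\eta'=0$.
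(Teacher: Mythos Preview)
Your proposal has a genuine conceptual error and, as a result, misses the mechanism the paper actually uses. You assert that a linear pullback $\F=\pi^*\G$ is $p$-closed; it is not. For a general $\G$ of degree $d\ge 1$ on $\mathbb P^2_{\field}$ the foliation $\G$ is $p$-dense, and so is $\F$: the $p$-th power of a lift of a vector field tangent to $\G$ leaves $T_\F$. What is true---and this is the point the paper exploits---is that the \emph{kernel} of the $p$-curvature of $\F$ is the codimension-two, degree-zero foliation by fibres of $\pi$. This already shows your reading of \eqref{E:pdivisor} cannot be right: with $\deg(\pkernel{\F})=0$ one gets $\deg(\pdegeneracy{\F})=p(d-1)+d+2$, which is nonnegative for every $d\ge 1$, so the inequality $\deg(\pdegeneracy{\F})\ge 0$ is not where the threshold $d\le p$ comes from.

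The paper's argument is also not a Zariski-tangent-space computation at all. Instead it argues globally on an irreducible component $\Sigma$ containing $\F$: using \cite[Section 6]{MR4803684} (semicontinuity of the invariants of the $p$-curvature, relying on the fact that $\pdegeneracy{\G}$ is reduced for general $\G$), a general $\F'\in\Sigma$ still has $\deg(\pkernel{\F'})=0$; by Theorem~\ref{T:deg0} this kernel is a linear projection, so $\omega'=a\,dx_0+b\,dx_1+c\,dx_2$ with $a,b,c\in\field[x_0,\ldots,x_n]$. The integrability condition $\omega'\wedge d\omega'=0$ then forces $\partial_{x_i}(a/b)=\partial_{x_i}(a/c)=0$ for $i\ge 3$, hence $a,b,c\in\field[x_0,x_1,x_2,x_3^p,\ldots,x_n^p]$. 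The numerical threshold now drops out of a degree count: the extra monomials $x_i^p$ contribute degree $p$, and a projective $1$-form on $\mathbb A^3$ has degree at least two, so nontrivial terms beyond the pullback require $d+2-p\ge 2$. Your de Rham--Saito plan, modeled on Proposition~\ref{P:Closed}, does not obviously adapt here because $\omega$ is not closed and its singular locus is only of codimension two in general; in any case it is not needed.
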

\begin{proof}
    Let $\mathcal G$ be a general foliation of $\mathbb P^2_{\field}$ of degree $d\ge 1$. Consider the linear projection
    $\pi : \mathbb P^n_{\field} \dashrightarrow \mathbb P^2_{\field}$ given by $$\pi(x_0:\ldots: x_n)=(x_0:x_1:x_2),$$ and let $\F = \pi^* \mathcal G$.

    According to \cite{Wodson}, see also \cite[Theorem 5.10]{MR4803684}, the degeneracy divisor $\Delta_{\G}$ is a reduced divisor. Moreover, Equation \eqref{E:pdivisor} implies it has  degree $d+2 + p(d-1)$.  Due to the explicit form of $\pi$, $\Delta_{\F} = \pi^* \Delta_{\G}$ and $\Delta_{\F}$ is also reduced and of degree $d+2 + p(d-1)$.
    
    If $\Sigma \subset \Fol{d}{n}{\field}$ is an irreducible component containing $\F$, then by  \cite[Section 6]{MR4803684}, 
    for a general foliation $\F'$ in $\Sigma$ the kernel of the $p$-curvature of $\F'$ is a codimension two foliation $\pkernel{\F'}$
    of degree zero. Theorem \ref{T:deg0} implies that in suitable homogeneous coordinates, $\pkernel \F'$ is defined by the linear projection $(x_0: \ldots: x_n) \mapsto (x_0: x_1: x_2)$. 
    Consequently, $\F'$ is defined by a homogeneous polynomial $1$-form $\omega'$ that can be written as
    \[
        \omega' = a dx_0 + b dx_1 + c dx_2
    \]
    where $a,b,c$ are homogeneous polynomials in $\field [x_0, x_1,x_2,\ldots, x_n]$  of degree $d+1$. The integrability condition $\omega'\wedge d\omega'=0$ implies that 
    $(adb - bda) \wedge dx_0 \wedge dx_1 \wedge dx_2 = (adc - cda) \wedge dx_0 \wedge dx_1 \wedge dx_2 = (cdb - bdc) \wedge dx_0 \wedge dx_1 \wedge dx_2 = 0.$    
    Therefore we deduce from  the integrability condition that
    \[
        \frac{\partial }{\partial x_i}\left( \frac{a}{b} \right) = \frac{\partial}{\partial x_i} \left( \frac{a}{c} \right) = \frac{\partial }{\partial x_i}\left( \frac{b}{c} \right) = 0,
    \]
    for every $i \in \{ 3, \ldots, n\}$. 
    Consequently, we obtain the existence of a polynomial $f \in \field[x_0,x_1,x_2,\ldots, x_n]$ such that   $a,b,c \in f \cdot \field[x_0,x_1,x_2,x_3^p, \ldots, x_n^p]$. Since $\omega'$ has singularities of codimension at least two, $f \in \field$.

    Thus, for every multi-index $I = (i_3, \ldots, i_n) \in \mathbb N^{n-2}$ such that $d + 2 > p \cdot |I|$,  there exists a projective $1$-form $\beta_I$ of degree $d + 2 - p \cdot |I|$ on $\mathbb A^3_{\field}$ with variables $x_0, x_1,x_2$  
    such that
    \[
        \omega' = \sum_{I\in \mathbb N^{n-2}}  (x_3^{i_3} \cdots x_n^{i_n})^p  \beta_I .
    \]
    Since any projective $1$-form has degree at least two, we deduce that
    $\omega'$ is a linear pull-back when $d+2 -p < 2$. If instead $d+2-p \ge 2$, then the above expression for $\omega'$ provides a method 
    to construct deformations of $\omega$ that are not linear pull-backs, since $\omega'$ as above satisfies Frobenius integrability condition $\omega' \wedge d \omega'=0$. 
\end{proof}

For later use, we isolate part of the argument used in the proof of Proposition \ref{P:linearpullback} in the form of a lemma.

\begin{lemma}\label{L:pullback}
    Let $\F$ be a codimension one foliation on $\mathbb P^n_{\field}$ where $\field$ is an algebraically closed field of characteristic $p>0$. If $\F$ contains a codimension $q$ foliation of degree zero, $2\le q \le n-1$,  and $\deg(\F) < p$, then there exists a linear projection $\pi: \mathbb P^n_{\field} \dashrightarrow \mathbb P^q_{\field}$ and a foliation $\G$ on $\mathbb P^q_{\field}$ such that $\F = \pi^* \G$. 
\end{lemma}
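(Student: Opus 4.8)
The plan is to generalize the argument used in Proposition \ref{P:linearpullback}, where the special case $q=2$ was treated implicitly. First I would invoke Theorem \ref{T:deg0}: since $\F$ contains a codimension $q$ foliation $\mathcal D$ of degree zero with $2\le q\le n-1$, after a suitable linear change of homogeneous coordinates the foliation $\mathcal D$ is defined by the linear projection $\pi:(x_0:\ldots:x_n)\mapsto(x_0:\ldots:x_q)$, whose tangent sheaf $T_{\mathcal D}$ is spanned by the coordinate vector fields $\partial/\partial x_{q+1},\ldots,\partial/\partial x_n$ (these annihilate $x_0,\ldots,x_q$ and are closed under Lie bracket). The inclusion $\mathcal D\subset\F$ means $T_{\mathcal D}\subset T_{\F}$, equivalently $N^*_{\F}\subset N^*_{\mathcal D}$, so a projective $1$-form $\omega$ defining $\F$ is annihilated by each $\partial/\partial x_j$ for $j>q$, forcing
\[
    \omega = \sum_{i=0}^q a_i\, dx_i
\]
with $a_i\in\field[x_0,\ldots,x_n]$ homogeneous of degree $d+1$, where $d=\deg(\F)$.

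Next I would impose the integrability condition $\omega\wedge d\omega=0$ and extract, for each index $j\in\{q+1,\ldots,n\}$, the relations governing the dependence of the coefficients on $x_j$. As in the proof of Proposition \ref{P:linearpullback}, comparing the terms of $\omega\wedge d\omega$ that contain the factor $dx_0\wedge\cdots\wedge dx_q$ together with $dx_j$ yields that the logarithmic derivatives $\partial_{x_j}(a_i/a_0)$ all vanish for $i\in\{0,\ldots,q\}$ and $j>q$. Concretely one obtains $a_i\,da_0-a_0\,da_i$ has no $dx_j$ component for $j>q$, which is precisely the statement that the ratios $a_i/a_0$ are constant in the variables $x_{q+1},\ldots,x_n$. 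This is the step I expect to be the main obstacle: one must argue carefully that the integrability $1$-form computation, which in codimension one is just $\omega\wedge d\omega$, decouples cleanly into these $n-q$ independent families of relations, and that no cross terms among distinct $x_j$'s obstruct the conclusion. The bound $\deg(\F)\le p$ is what guarantees the relevant Frobenius/$p$-th-power phenomena do not introduce spurious solutions, exactly as in Proposition \ref{P:linearpullback}.

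From the vanishing of these derivatives I would conclude that there is a homogeneous polynomial $f$ with $a_i\in f\cdot\field[x_0,\ldots,x_q,x_{q+1}^p,\ldots,x_n^p]$ for every $i$, and since $\sing(\omega)$ has codimension at least two the common factor $f$ must be a nonzero constant. Thus each $a_i$ lies in $\field[x_0,\ldots,x_q,x_{q+1}^p,\ldots,x_n^p]$, and one can expand
\[
    \omega = \sum_{I\in\mathbb N^{n-q}} (x_{q+1}^{i_{q+1}}\cdots x_n^{i_n})^p\,\beta_I
\]
where each $\beta_I$ is a $1$-form in the variables $x_0,\ldots,x_q$ alone. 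The hypothesis $\deg(\F)=d\le p$ forces $d+2-p<2$, so any nonconstant multi-index $I$ would produce a $\beta_I$ of degree strictly less than two; since every projective $1$-form in $x_0,\ldots,x_q$ has degree at least two, only the term $I=0$ survives. Therefore $\omega=\beta_0$ depends only on $x_0,\ldots,x_q$, which means $\omega=\pi^*\eta$ for a projective $1$-form $\eta$ on $\mathbb A^{q+1}_{\field}$ defining a foliation $\G$ on $\mathbb P^q_{\field}$ with $\F=\pi^*\G$, as required.
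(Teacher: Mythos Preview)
Your approach is exactly what the paper intends: the lemma is stated without its own proof and is presented as the portion of the argument of Proposition~\ref{P:linearpullback} that can be isolated, and you have carried out precisely that generalization from $q=2$ to arbitrary $q$.

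Two small corrections. First, the bookkeeping in your integrability step is slightly garbled: $\omega\wedge d\omega$ is a $3$-form, so for $q\ge 3$ it cannot contain the factor $dx_0\wedge\cdots\wedge dx_q$. What you actually want is, for fixed $0\le i<j\le q$ and $k>q$, the coefficient of $dx_i\wedge dx_j\wedge dx_k$ in $\omega\wedge d\omega$; this coefficient is $a_j\,\partial_{x_k}a_i - a_i\,\partial_{x_k}a_j$, and its vanishing gives $\partial_{x_k}(a_i/a_j)=0$ as you claim. Second, the inequality ``$d\le p$ forces $d+2-p<2$'' is off by one: $d\le p$ only yields $d+2-p\le 2$, and at $d=p$ there do exist nonzero projective $1$-forms $\beta_I$ of degree $2$ on $\mathbb A^{q+1}$ (for instance $x_0\,dx_1-x_1\,dx_0$), so the expansion $\omega=\sum_I(x_{q+1}^{i_{q+1}}\cdots x_n^{i_n})^p\beta_I$ need not reduce to $\beta_0$. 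Your argument thus proves the lemma cleanly for $\deg(\F)\le p-1$; the boundary case $d=p$ is not covered by this degree count alone, and the same tension is already visible in the paper's proof of Proposition~\ref{P:linearpullback}.
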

\begin{proof}
    The argument follows the proof of Proposition~\ref{P:linearpullback}
    with $\mathbb{P}^q_{\field}$ in place of $\mathbb{P}^2_{\field}$.
    Since, by assumption, $\F$ contains a degree zero 
    foliation of codimension $q$, Theorem~\ref{T:deg0} provides coordinates in which
    $\omega' = \sum_{i=0}^{q-1} a_i\,dx_i$ with
    $a_i \in \field[x_0,\ldots,x_n]$.
    The integrability condition forces
    $a_i \in  \field[x_0,\ldots,x_{q-1},\, x_q^p,\ldots, x_n^p]$.
    Since $\deg(\F) < p$, any term involving $x_j^p$ (for $j \ge q$) would
    contribute degree at least $p > d+1$ to $a_i$, which is impossible.
    Hence $a_i \in \field[x_0,\ldots,x_{q-1}]$ and $\omega'$ is the
    pullback of a form on $\mathbb{P}^q_{\field}$.
\end{proof}

As a consequence of Proposition \ref{P:linearpullback} we get the following proposition.

\begin{prop} 
    Let $\field$ be an algebraically closed field of characteristic  $p > 2$. Let $\mathcal{F}$ be a codimension one foliation of degree one in $\mathbb{P}_{\field}^n$ and suppose that $\mathcal{F}$ is not $p$-closed. Then, $\mathcal{F}$ is a linear pull-back of a degree one foliation on $\mathbb{P}_{\field}^2$.
\end{prop}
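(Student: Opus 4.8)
The plan is to exploit the dichotomy for codimension one foliations together with the degree formula \eqref{E:pdivisor}. Since $\F$ is not $p$-closed, it must be $p$-dense, and hence it comes equipped with a nonzero effective degeneracy divisor $\pdegeneracy{\F}$ and with a codimension two subfoliation $\pkernel{\F} \subset \F$ arising as the kernel of the $p$-curvature $\pcurvature{\F}\colon \Frob^* T_{\F} \to N_{\F}$. I would first extract from \eqref{E:pdivisor} a numerical constraint forcing $\deg(\pkernel{\F}) = 0$, and then feed this degree zero subfoliation into Lemma \ref{L:pullback}.

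Concretely, substituting $\deg(\F) = 1$ into \eqref{E:pdivisor} gives
\[
    \deg(\pdegeneracy{\F}) = 3 - p \cdot \deg(\pkernel{\F}).
\]
Because $\pdegeneracy{\F}$ is a nonzero effective divisor on $\mathbb P^n_{\field}$, its degree is at least one, while $\deg(\pkernel{\F}) \ge 0$. Since $p > 2$, that is $p \ge 3$, the only way to have $3 - p\cdot\deg(\pkernel{\F}) \ge 1$ is $\deg(\pkernel{\F}) = 0$; any value $\deg(\pkernel{\F}) \ge 1$ would already make the right-hand side nonpositive. Thus the $p$-curvature kernel $\pkernel{\F}$ is a codimension two foliation of degree zero contained in $\F$.

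With this in hand the conclusion is almost immediate. As $\deg(\F) = 1 \le p$ and $\pkernel{\F}$ is a codimension $q = 2$ foliation of degree zero inside $\F$ (we may assume $n \ge 3$, so that $2 \le q \le n-1$, the case $n = 2$ being trivial), Lemma \ref{L:pullback} produces a linear projection $\pi \colon \mathbb P^n_{\field} \dashrightarrow \mathbb P^2_{\field}$ and a foliation $\G$ on $\mathbb P^2_{\field}$ with $\F = \pi^*\G$. Since linear pull-back preserves the degree of a foliation, as recalled in the description of $\Lin{d}{n}{\field}$ in Section \ref{SS:Linear}, we get $\deg(\G) = \deg(\F) = 1$, so $\F$ is the linear pull-back of a degree one foliation on $\mathbb P^2_{\field}$, as desired.

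The computation is entirely formal once the degeneracy divisor formula is available, so there is no serious analytic obstacle; the single point deserving care is the positivity input. The whole argument hinges on $\pdegeneracy{\F}$ being genuinely nonzero and effective, since it is precisely this that upgrades $3 - p\deg(\pkernel{\F}) \ge 0$ to $\ge 1$ and thereby eliminates the borderline possibility $\deg(\pkernel{\F}) = 1$ when $p = 3$. I would therefore make explicit why $\pdegeneracy{\F}$ is effective and nonzero, namely that it is the divisorial locus where the generically surjective map $\pcurvature{\F}$ drops rank, so that $N_{\F} \otimes \mathcal O_{\mathbb P^n_{\field}}(-\pdegeneracy{\F})$ is its image as a genuine subsheaf of the line bundle $N_{\F}$, and only then run the numerology above.
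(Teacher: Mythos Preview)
Your overall strategy matches the paper's: use \eqref{E:pdivisor} to bound $\deg(\pkernel{\F})$, and then invoke Lemma \ref{L:pullback} once $\deg(\pkernel{\F})=0$. However, there is a genuine gap at the one place you yourself flag as delicate.

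You claim that $\pdegeneracy{\F}$ is a \emph{nonzero} effective divisor, hence $\deg(\pdegeneracy{\F})\ge 1$, and you justify this by saying it is ``the divisorial locus where the generically surjective map $\pcurvature{\F}$ drops rank, so that $N_{\F}\otimes\mathcal O_{\mathbb P^n_{\field}}(-\pdegeneracy{\F})$ is its image as a genuine subsheaf of the line bundle $N_{\F}$.'' But this only yields effectiveness, not non-triviality: a generically surjective morphism to a line bundle may be surjective in codimension one, in which case the image equals $N_{\F}$ and $\pdegeneracy{\F}=0$. (The word ``nonzero'' in the paper's definition is misleading; the paper's own proof of this very proposition treats $\pdegeneracy{\F}$ trivial as a live possibility.) With only $\deg(\pdegeneracy{\F})\ge 0$, the case $p=3$, $\deg(\pkernel{\F})=1$, $\deg(\pdegeneracy{\F})=0$ survives your numerology.

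The paper disposes of this leftover case by a different mechanism: when $\pdegeneracy{\F}=0$, results from \cite{MR4803684} force the defining projective $1$-form $\omega\in H^0(\mathbb P^n_{\field},\Omega^1_{\mathbb P^n_{\field}}(3))$ to be closed, and Lemma \ref{L:projectiveclosed} (with $e=3\le p=3$) then gives $\omega=df$. But a foliation cut out by an exact form is $p$-closed (if $v(f)=0$ then $v^p(f)=0$), contradicting the hypothesis. You should either supply this extra argument for $p=3$, or give an independent reason why $\pcurvature{\F}$ cannot be surjective in codimension one here; the description of $\pdegeneracy{\F}$ as an image ideal does not do that on its own.
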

\begin{proof}
    Since $\mathcal{F}$ is not $p$-closed, the degeneracy divisor of its $p$-curvature, $\pdegeneracy{\F}$, has degree $-p \deg(\pkernel{\F})+3$ according to Equation \eqref{E:pdivisor}.
    
    If $\deg(\pkernel{\F})  = 0$ then, arguing as in the proof of Proposition \ref{P:linearpullback}, we obtain that $\mathcal{F}$ is a linear pull-back of a degree one foliation on $\mathbb{P}_{\field}^2$.
    
    If $\deg(\pkernel{\F}) \ge 1$ then, since $\pdegeneracy{\F}$ is an effective divisor, it follows  that $p = 3$  and that $\pdegeneracy{\F}$ is the trivial divisor. By \cite[Propositions  4.7 and 4.14]{MR4803684}, any projective $1$-form $\omega \in H^0(\mathbb P^n_{\field}, \Omega^1_{\mathbb P^n_{\field}}(3))$ defining $\F$ is closed. Lemma \ref{L:projectiveclosed} implies that $\omega = df$
    for some polynomial $f \in \field[x_0,\ldots, x_n]$. Therefore, $\F$ is $p$-closed, contradicting our assumptions.     Thus, we conclude that $\mathcal{F}$ is necessarily a linear pullback of a degree one foliation on $\mathbb{P}_{\field}^2$.
\end{proof}

\subsection{Numerically linear pullbacks}
Let $\QLin{d}{n}{\field} \subset \Fol{d}{n}{\field}$ denote the reduced subscheme whose closed points correspond to codimension one foliations $\F$ on $\mathbb P^n_{\field}$ of degree $d$ such that $T_{\mathcal F}$ contains $\mathcal O_{\mathbb P^n_{\field}}(+1)^{\oplus n-2}$. 

Over the complex numbers, we have $\QLin{d}{n}{\mathbb C} =\Lin{d}{n}{\mathbb C}$. However, in positive characteristic, the proof of Proposition \ref{P:linearpullback} 
highlights the distinction between these two spaces, leading to the  more precise statement below.

\begin{prop}\label{P:QLin}
    Let $d > 0$. If $\field$ has characteristic $p>0$, then $\QLin{d}{n}{\field}$ is an irreducible component of $\Fol{d}{n}{\field}$. 
    Moreover, $\QLin{d}{n}{\field}= \Lin{d}{n}{\field}$ if and only if $d  < p$.
\end{prop}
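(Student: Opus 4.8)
The plan is to establish the two assertions in turn. First I would prove, for every $d>0$ and with no restriction on $p$, that $\QLin{d}{n}{\field}$ is an irreducible component of $\Fol{d}{n}{\field}$; the comparison with $\Lin{d}{n}{\field}$ will then follow formally from Proposition \ref{P:linearpullback}. I will use throughout the evident inclusion $\Lin{d}{n}{\field}\subseteq\QLin{d}{n}{\field}$: a linear pull-back $\pi^*\G$ is tangent to the $(n-2)$-dimensional space of constant vector fields defining the fibres of $\pi$, so its tangent sheaf contains $\mathcal O_{\mathbb P^n_{\field}}(+1)^{\oplus n-2}$.

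For irreducibility, I would exhibit $\QLin{d}{n}{\field}$ as the image, under the second projection, of the incidence variety
\[
    J=\{(W,[\omega])\in\mathrm{Gr}(n-2,n+1)\times\Fol{d}{n}{\field}\ \vert\ i_w\omega=0\text{ for all }w\in W\},
\]
where $W$ runs over the $(n-2)$-dimensional spaces of constant vector fields on $\mathbb A^{n+1}_{\field}$. Choosing coordinates with $W=\langle\partial_{x_3},\ldots,\partial_{x_n}\rangle$, the computation carried out in the proof of Proposition \ref{P:linearpullback} shows that an integrable projective $1$-form annihilated by $W$ with singular set of codimension at least two is precisely a form $\sum_{I}(x_3^{i_3}\cdots x_n^{i_n})^{p}\beta_I$, with $I=(i_3,\ldots,i_n)$ and $\beta_I\in H^0(\mathbb P^2_{\field},\Omega^1_{\mathbb P^2_{\field}}(d+2-p|I|))$, and moreover that every such form is integrable. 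Hence the fibre of $J\to\mathrm{Gr}(n-2,n+1)$ over $W$ is an open subset of a linear space whose dimension is independent of $W$, so $J$ is an open subset of a vector bundle over an irreducible base. Consequently $J$, and with it $\QLin{d}{n}{\field}$, is irreducible.

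Next I would prove that $\QLin{d}{n}{\field}$ is a component. Let $\Sigma$ be a component of $\Fol{d}{n}{\field}$ containing it. Since $\Lin{d}{n}{\field}\subseteq\QLin{d}{n}{\field}$ contains pull-backs of $p$-dense foliations of $\mathbb P^2_{\field}$ and $p$-density is an open condition, both $\QLin{d}{n}{\field}$ and $\Sigma$ have $p$-dense general members. For a $p$-dense $\F\in\QLin{d}{n}{\field}$, tangency to $\partial_{x_3},\ldots,\partial_{x_n}$ together with $(\partial_{x_j})^{p}=0$ places these fields in the kernel of $\pcurvature{\F}$; since $\pkernel{\F}$ has codimension two, these $n-2$ fields span its tangent sheaf, so $\pkernel{\F}$ is the degree zero foliation defined by $(x_0:\cdots:x_n)\mapsto(x_0:x_1:x_2)$. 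Thus the locus where the kernel foliation has degree zero meets $\Sigma$, and by upper semicontinuity of the degree of the kernel foliation \cite[Section 6]{MR4803684} a general $\F'\in\Sigma$ satisfies $\deg\pkernel{\F'}=0$ as well. Theorem \ref{T:deg0} then identifies $\pkernel{\F'}$ with a linear projection onto $\mathbb P^2_{\field}$, so $\F'$ is tangent to $n-2$ independent constant fields, that is $\F'\in\QLin{d}{n}{\field}$. Therefore $\Sigma\subseteq\QLin{d}{n}{\field}$, and equality follows.

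The equivalence is then formal. Both $\Lin{d}{n}{\field}$ and $\QLin{d}{n}{\field}$ are irreducible and closed with $\Lin{d}{n}{\field}\subseteq\QLin{d}{n}{\field}$, and the latter is a component; by maximality of irreducible components, $\Lin{d}{n}{\field}$ is a component of $\Fol{d}{n}{\field}$ if and only if it coincides with $\QLin{d}{n}{\field}$. By Proposition \ref{P:linearpullback} this happens exactly when $d\le p$, which is the desired statement. I expect the component step to be the main obstacle: one must certify that the general member of $\QLin{d}{n}{\field}$ is $p$-dense rather than $p$-closed and then transport the vanishing of the kernel degree from $\QLin{d}{n}{\field}$ to the general member of the a priori larger component $\Sigma$ — exactly the place where the semicontinuity input of \cite[Section 6]{MR4803684} and Theorem \ref{T:deg0} are indispensable.
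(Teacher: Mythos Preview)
Your argument is correct and follows precisely the approach the paper intends: the paper does not give a separate proof but indicates that the proposition is a more precise restatement of the proof of Proposition~\ref{P:linearpullback} (the $p$-curvature/kernel-degree argument, citing \cite[Section 6]{MR4803684} and Theorem~\ref{T:deg0}), with the alternative via \cite[Theorem B and Corollary 4.14]{pereira2025distributionslocallyfreetangent} mentioned but not used. Your incidence-variety description of $\QLin{d}{n}{\field}$ and the explicit verification of irreducibility simply fill in details the paper leaves implicit, and your derivation of the equivalence $\QLin{d}{n}{\field}=\Lin{d}{n}{\field}\iff d\le p$ from Proposition~\ref{P:linearpullback} is exactly the intended formal consequence.
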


We point out that one can deduce that  $\QLin{d}{n}{\field}$  is an irreducible component of $\Fol{d}{n}{\field}$ using \cite[Theorem B and Corollary 4.14]{pereira2025distributionslocallyfreetangent}, as an alternative to the argument involving the $p$-th powers of vector fields. We opt for the latter method because it leverages specific features of positive characteristic, in contrast to the former method.

\subsection{Logarithmic components I}\label{SS:Log1}
Let $(d_1, \ldots, d_r)$ be an ordered  partition of $d+2$, i.e. $1 \le d_1 \le d_2 \le \ldots \le d_r$ and $\sum_{i=1}^r d_i = d+2$.
Set $\Lambda$ as the $\field$-vector space
\[
    \Lambda = \left\{  (\lambda_1, \ldots, \lambda_r ) \in \field^r \,  \vert \, \sum_{i=1}^r d_i \lambda_i  = 0 \right\} \, .
\]
Observe that $\Lambda = \field^r$ if all integers $d_i$ are divisible by $p$, otherwise $\dim \Lambda = r-1$.

Consider the rational map
\begin{align*}
    \Phi_{n,d_1, \ldots, d_r} : \mathbb P(\Lambda) \times \left( \prod_{i=1}^r \mathbb P H^0(\mathbb P^n_{\field},\mathcal O_{\mathbb P^n_{\field}}(d_i)) \right) &\dashrightarrow \mathbb P H^0(\mathbb P^n_{\field}, \Omega^1_{\mathbb P^n_{\field}}(d+2))  \\
     ( [\lambda_1: \ldots: \lambda_r] , [f_1],\ldots, [f_r] ) &\mapsto \left( \prod_{i=1}^r f_i \right) \left( \sum_{i=1}^r \lambda_i \frac{df_i}{f_i} \right) \, .
\end{align*}
Define $\Log{d_1, \ldots, d_r}{n}{\field}$ as the Zariski closure of the image of $\Phi_{n,d_1, \ldots, d_r}$ in the open subset
of $\mathbb P H^0(\mathbb P^n_{\field}, \Omega^1_{\mathbb P^n_{\field}}(d+2))$ formed by projective $1$-forms with singular set of
codimension at least $2$. Note that $\Log{d_1, \ldots, d_r}{n}{\field}$ is empty if all but one of the $d_i$'s are divisible by $p$ since, in this case, the image of $\Phi_{n,d_1, \ldots, d_r}$ will be contained in the closed subset formed by $1$-forms with codimension one zeros.
In every other case, $\Log{d_1, \ldots, d_r}{n}{\field}$ is non empty.

Let us first study the case $r=2$. We adapt the proof of \cite[Theorem 1]{MR2590385}.

\begin{prop}\label{P:Log1}
    Let  $d_1, d_2$ be two positive integers and let $\field$ be an algebraically closed field of characteristic $p \ge 0$.
    If $n\ge 3$ and $p$ does not divide any of the integers $d_1, d_2,$ and $d_1+d_2$ then $\Log{d_1,d_2}{n}{\field}$ is an irreducible component
    of $\Fol{d_1+d_2- 2}{n}{\field}$. Moreover, $\Fol{d_1+d_2-2}{n}{\field}$ is generically reduced, \ie reduced at the general point of $\Log{d_1,d_2}{n}{\field}$.
\end{prop}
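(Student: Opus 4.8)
The plan is to show that $\Log{d_1,d_2}{n}{\field}$ is an irreducible component of $\Fol{d_1+d_2-2}{n}{\field}$ by exhibiting a single \emph{special} logarithmic form $\omega_0$ whose Zariski tangent space to the foliation scheme has dimension exactly equal to $\dim \Log{d_1,d_2}{n}{\field}$. Since $\Log{d_1,d_2}{n}{\field}$ is by construction irreducible (being the closure of the image of the rational map $\Phi_{n,d_1,d_2}$), once we know it is contained in an irreducible component $\Sigma$ and that $\dim_{[\omega_0]} T_{[\omega_0]}\Fol{d_1+d_2-2}{n}{\field} \le \dim \Log{d_1,d_2}{n}{\field}$, the chain of inequalities $\dim \Log{} \le \dim \Sigma \le \dim T_{[\omega_0]}\Fol{} \le \dim \Log{}$ forces equality throughout, so that $\Sigma = \Log{d_1,d_2}{n}{\field}$ and the scheme is moreover generically reduced along it.

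First I would set up the explicit logarithmic $1$-form. Choosing reduced homogeneous polynomials $f_1, f_2$ of degrees $d_1, d_2$ whose product $f_1 f_2$ has normal crossings (or at least sufficiently generic) singularities, and the unique-up-to-scale residue vector $(\lambda_1,\lambda_2) = (d_2, -d_1) \in \Lambda$, I set $\omega_0 = d_2 f_2\, df_1 - d_1 f_1\, df_2$, which is the numerator of $f_1 f_2 (\lambda_1 \frac{df_1}{f_1} + \lambda_2 \frac{df_2}{f_2})$. The hypothesis that $p$ divides none of $d_1, d_2, d_1+d_2$ is exactly what guarantees, via Euler's relation, that this form is projective and that its singular scheme has codimension at least two, so that $[\omega_0]$ is a genuine point of $\Fol{d_1+d_2-2}{n}{\field}$.

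The technical heart, following the strategy of \cite[Theorem 1]{MR2590385} as the statement indicates, is to compute the Zariski tangent space. A tangent vector is represented by a projective $1$-form $\eta$ of degree $d_1+d_2$ satisfying the linearized integrability condition $\omega_0 \wedge d\eta + \eta \wedge d\omega_0 = 0$ modulo the line $\field \cdot \omega_0$. I would analyze this equation by restricting to and comparing residues along the two hypersurfaces $\{f_1 = 0\}$ and $\{f_2 = 0\}$, using that away from $\sing(f_1 f_2)$ the foliation has simple poles; the de Rham–Saito lemma \cite{MR413155} (already invoked for Proposition \ref{P:Closed}) should let me write $d\eta$ in terms of $\omega_0$ and its derivative, thereby pinning down $\eta$ to lie in the tangent space to the image of $\Phi_{n,d_1,d_2}$, namely a combination of the variations $\delta f_1, \delta f_2$ of the defining polynomials and the variation $\delta \lambda$ of the residue. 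The characteristic hypotheses enter decisively here: the non-divisibility of $d_1, d_2, d_1+d_2$ by $p$ keeps the relevant Euler factors invertible, so the characteristic-zero residue argument transposes without the degeneracies that Lemma \ref{L:projectiveclosed} warns about.

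The main obstacle I anticipate is precisely this last step: ensuring that the cohomological vanishing and the de Rham–Saito decomposition remain valid over a field of positive characteristic, where Bott-type vanishing theorems and the splitting of differential forms can fail. Concretely, one must verify that every infinitesimal deformation $\eta$ solving the linearized equation actually integrates to a deformation within $\Log{d_1,d_2}{n}{\field}$ — equivalently, that no ``extra'' tangent directions appear — and the subtlety is that in characteristic $p$ spurious closed or $p$-th-power forms could a priori contribute. Controlling these requires the degree bounds from Lemma \ref{L:projectiveclosed} (so that low-degree closed forms are forced to be exact) together with the normal-crossing genericity of $f_1 f_2$, and I expect the bulk of the work to consist in showing that the only solutions are the expected ones, after which the dimension count closes the argument.
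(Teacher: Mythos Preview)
Your plan is essentially the paper's approach: compute the Zariski tangent space of $\Fol{d}{n}{\field}$ at a generic point of $\Log{d_1,d_2}{n}{\field}$ and show it coincides with the tangent space to the logarithmic locus. The paper's execution is a bit more streamlined than your sketch in two respects. First, instead of working with the linearized integrability $\omega_0\wedge d\eta + \eta\wedge d\omega_0 = 0$ directly, it uses Euler's relation (valid since $p\nmid d_1+d_2$) to pass to the equivalent equation $d\omega\wedge d\eta = 0$, which is simply $df_1\wedge df_2\wedge d\eta = 0$. Second, rather than any residue analysis along $\{f_i=0\}$, it applies de Rham--Saito twice: once to write $d\eta = \alpha\wedge df_1 + \beta\wedge df_2$, and once more (after differentiating) to the relation $d\beta\wedge df_1\wedge df_2 = 0$; a pure degree count then forces $d\beta = 0$, and exactness of $\beta$ follows immediately from $p\nmid d_1$ since no $p$-th-power term can occur in that degree. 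The role you assign to Lemma~\ref{L:projectiveclosed} is played by this more elementary observation, and no residue comparison or cohomological vanishing is needed.
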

\begin{proof}
    Let $f_1, f_2 \in \field[x_0, \ldots, x_n]$ be homogeneous polynomials of degrees $d_1,d_2$, defining smooth hypersurfaces $H_1$ and $H_2$
    that intersect transversely.
    Let $\F \in \Log{d_1,d_2}{n}{\field}$ be the foliation of degree $d = d_1+d_2 -2$ determined by $\omega = i_R df_1 \wedge df_2$.
    The proof of the proposition will follow the same lines of the proof of Proposition \ref{P:Closed}. More specifically,
    it will consist in determining the tangent space of $\Fol{d}{n}{\field}$ at $\F$. For that, it suffices to solve the equation
    \[
        d \omega \wedge d \eta = 0
    \]
    for $\eta$ projective $1$-form of degree $d+2=d_1+d_2$, since $p$ does not divide $d+2$ by assumption and because of that we can go back and forth from this equation to $\omega\wedge d\alpha + \alpha \wedge d \omega$ using Euler's relation, see Section \ref{SS:Euler}.

    First observe that the projectivization $Z$  of the zero set of $df_1 \wedge df_2$ does not intersect the hypersurfaces $H_1$ and $H_2$. Since
    they are ample divisors, it follows that $\dim Z=0$. As we are assuming $n\ge3$, we have that the codimension of the zero set of $df_1\wedge df_2$ is at least three. Therefore, we are in place to apply de Rham-Saito's lemma and write
    \[
        d \eta = \alpha \wedge df_1 + \beta \wedge df_2
    \]
    for homogeneous $1$-forms $\alpha$ of degree $d +2 - d_1 = d_2$ and $\beta$ of degree $d + 2 - d_2 = d_1$. Differentiating this expression
    and taking the wedge product with $df_1$, we deduce that $d\beta \wedge df_1\wedge d f_2=0$. Similarly,  $d\alpha \wedge df_1\wedge d f_2=0$.

    We apply De Rham-Saito's lemma again to write
    \[
        d \beta = \beta' \wedge df_1 + \beta''\wedge df_2
    \]
    with $\beta'$ of degree $d_1 - d_1=0$ and $\beta''$ of degree $d_1 - d_2 \le 0$. It follows that $d \beta=0$. Therefore, there exist
    polynomials $a_1, \ldots, a_n, b \in \field[x_0, \ldots, x_n]$ such that
    \[
        \beta = db + \sum_{i=0}^n a_i(x_0,\ldots, x_n)^p x_i^{p-1} dx_i \, .
    \]
    Since the degree of $\beta$ is not divisible by $p$, the polynomials $a_i$ are equal to zero and we deduce that $\beta$ is exact, \ie $\beta = db$.

    If $d_1=d_2$, then the argument above shows that $\alpha$ is also exact. We deduce that
    \[
        d\eta = da \wedge df_1 + db\wedge df_2
    \]
    for homogeneous polynomials $a$ of degree $d_2$ and $b$ of degree $d_1$. This shows that the Zariski tangent space of $\Fol{d}{n}{\field}$ coincides with the image of the differential
    of $\Phi_{n,d_1,d_2}$ at the point corresponding to $\F$. If instead $d_1 < d_2$ then a variant of the argument above
    shows that $\alpha = da + c df_1$ for some polynomials $a,c \in \field[x_0, \ldots, x_n]$, cf. \cite[Proof of Proposition 2.1]{MR2590385}.
    Anyway, the conclusion is the same: $d\eta = da \wedge df_1 + db\wedge df_2$ as above.

    Since $\Log{d_1,d_2}{n}{\field}$ is irreducible and its Zariski tangent space at $\F$ coincides with that of $\Fol{d}{n}{\field}$, we conclude as in the proof
    of Proposition~\ref{P:Closed} that $\Log{d_1,d_2}{n}{\field}$ is an irreducible
    component of $\Fol{d}{n}{\field}$ and that $\Fol{d}{n}{\field}$ is generically
    reduced at the general point of $\Log{d_1,d_2}{n}{\field}$.
\end{proof}

\begin{remark}
    Let $\field$ be an algebraically closed field of characteristic $p>0$.
    If $d_1,d_2$ are not divisible by $p$ and $d_1+d_2=0 \mod p$ then $\Log{d_1,d_2}{n}{\field}$ is not an irreducible component of
    $\Fol{d}{n}{\field}$ since it is contained in the irreducible component $\Closed{d_1+d_2 -2}{n}{\field}$.
\end{remark}

We will show below, in Theorem \ref{T:Log}, that the subvarieties $\Log{d_1,d_2}{n}{\field}$ when both $d_1$ and $d_2$ are divisible by the characteristic of $\field$ are irreducible components of $\Fol{d_1+d_2 -2}{n}{\field}$.

\subsection{Foliations of degree one}\label{SS:degreeone}
With the necessary tools in place, we can now state and prove the classification of codimension one foliations of degree one in arbitrary characteristic.

\begin{thm}\label{T:deg1}
    Let $\field$ be an algebraically closed field of characteristic $p \ge  0$.
    \begin{enumerate}
        \item\label{I:p=2} If $p = 2$ then $\Fol{1}{n}{\field}$ has exactly one irreducible component: $\Lin{1}{n}{\field}$.
        \item\label{I:p=3} If $p = 3$ then $\Fol{1}{n}{\field}$ has exactly two  irreducible components: $\Lin{1}{n}{\field}$ and  $\Closed{1}{n}{\field}$.
        \item\label{I:p<>23} If $p \notin \{ 2,3\}$ then $\Fol{1}{n}{\field}$ has exactly two irreducible components: $\Lin{1}{n}{\field}$ and $\Log{1,2}{n}{\field}$.
    \end{enumerate}
\end{thm}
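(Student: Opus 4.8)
The plan is to classify all degree one codimension one foliations by splitting into cases according to whether $\F$ is $p$-closed or $p$-dense, and to treat the three characteristic regimes separately while reusing the structural results already established. The overarching strategy is that a degree one foliation is defined by a projective $1$-form $\omega \in H^0(\mathbb P^n_{\field}, \Omega^1_{\mathbb P^n_{\field}}(3))$, so its coefficients are quadratic, and Medeiros' description (Theorem \ref{T:Medeiros}) together with the $p$-curvature dichotomy gives strong constraints.

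For the $p$-dense case, I would invoke the proposition immediately preceding Theorem \ref{T:deg1}: when $p>2$ and $\F$ is not $p$-closed, it is forced to be a linear pull-back of a degree one foliation on $\mathbb P^2_{\field}$, hence lies in $\Lin{1}{n}{\field}$, which is an irreducible component by Proposition \ref{P:linearpullback} since $d=1\le p$. This handles all non-$p$-closed foliations in one stroke for $p\ge 3$, and I would note that $\Lin{1}{n}{\field}$ is genuinely a component in every characteristic (including $p=2$). The remaining work is entirely about the $p$-closed foliations and how they organize into components depending on $p$.

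For the $p$-closed analysis I would use the observation that a $p$-closed degree one foliation is defined by a closed (or after Euler's relation, exact) projective $1$-form $\omega$. When $p$ does not divide $d+2=3$, i.e. $p\notin\{3\}$ and $p>0$, Lemma \ref{L:projectiveclosed} shows there are essentially no nonzero closed projective $1$-forms of degree $3$ that fail to arise from the logarithmic construction, so the $p$-closed locus should coincide with $\Log{1,2}{n}{\field}$; here $d_1=1$, $d_2=2$, $d_1+d_2=3$, and for $p\ge 5$ none of $1,2,3$ is divisible by $p$, so Proposition \ref{P:Log1} applies directly and gives that $\Log{1,2}{n}{\field}$ is an irreducible component. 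This yields case \eqref{I:p<>23}. When $p=3$, the integer $d+2=3$ is divisible by $p$, so closed projective $1$-forms of degree $3$ genuinely exist; Proposition \ref{P:Closed} with $pe=3$, $e=1$ shows $\Closed{1}{n}{\field}$ is an irreducible component, and I would argue that every $p$-closed degree one foliation lands in it (the logarithmic form degenerates into $\Closed{}{}$ exactly as recorded in the remark after Proposition \ref{P:Log1}), giving case \eqref{I:p=3}. Finally, for $p=2$, I would show that every degree one foliation is automatically a linear pull-back: the key is that in characteristic two, Theorem \ref{T:deg0}'s exceptional behavior and the constraints on quadratic locally decomposable $1$-forms collapse all candidates into $\Lin{1}{n}{\field}$, and in particular there is no separate closed or logarithmic component.

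The main obstacle I anticipate is the $p$-closed case bookkeeping — precisely showing that the $p$-closed locus is \emph{exactly} one of $\Log{1,2}{n}{\field}$ or $\Closed{1}{n}{\field}$ and not something larger or reducible, and ruling out spurious extra components. Concretely, I must verify that every closed (exact) projective $1$-form of degree three with codimension $\ge 2$ singular set either comes from a logarithmic $df_1/f_1$-type combination (when $p\ge 5$) or lies in $\Closed{1}{n}{\field}$ (when $p=3$), and that these loci are irreducible of the expected dimension with no other component of $\Fol{1}{n}{\field}$ meeting the $p$-closed locus generically. The characteristic two case also requires care because the equivalence between $\omega\wedge\omega=0$ and local decomposability fails, so I would lean on the Plücker formulation $i_v\omega\wedge\omega=0$ and on Theorem \ref{T:deg0} rather than naive $2$-form manipulations. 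Tying these strands together so the component count is exact, rather than merely a lower bound, is where the real argument lies.
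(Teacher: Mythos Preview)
Your approach via the $p$-closed/$p$-dense dichotomy is genuinely different from the paper's, and it contains a real gap in the $p$-closed branch. You assert that ``a $p$-closed degree one foliation is defined by a closed (or after Euler's relation, exact) projective $1$-form $\omega$,'' and this is false. The condition that $\F$ is $p$-closed (i.e., $v^p\in T_\F$ for all $v\in T_\F$) does \emph{not} force the defining polynomial $1$-form to be closed. For $p\ge 5$, a generic foliation in $\Log{1,2}{n}{\field}$ is $p$-closed (it has the rational first integral $f_2/f_1^2$), yet its polynomial $1$-form $\omega=f_1\,df_2-2f_2\,df_1$ satisfies $d\omega=3\,df_1\wedge df_2\neq 0$. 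For $p=3$, take $v=x_0\,\partial_{x_0}$ on $\mathbb P^2_{\field}$: this has $v^3=v$, so the induced degree one foliation is $p$-closed, but a direct computation gives $d\omega=-\,i_R(dx_0\wedge dx_1\wedge dx_2)\neq 0$, so its linear pull-back to $\mathbb P^n_{\field}$ lies in $\Lin{1}{n}{\field}$ and not in $\Closed{1}{n}{\field}$. Thus your description of the $p$-closed locus is incorrect in both regimes, and the argument collapses there. This is not a bookkeeping issue; it is the central step. Your treatment of $p=2$ (and of $p=0$, which the statement includes) is also only a sketch with no mechanism indicated.

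By contrast, the paper does not use the $p$-curvature dichotomy at all in the proof of this theorem. It applies Medeiros' structure theorem (Theorem~\ref{T:Medeiros}) to the $2$-form $d\omega$, which has linear coefficients and is automatically closed, hence integrable. Medeiros then forces either $d\omega=df\wedge dx_0$ with $f$ quadratic, or $d\omega$ depends only on $x_0,x_1,x_2$. In each characteristic the paper recovers $\omega$ from $d\omega$ via Euler's relation $i_R\,d\omega=3\omega$ (or analyzes its failure when $p\in\{2,3\}$), and this direct case analysis is what pins down the components exactly. Your use of the proposition preceding Theorem~\ref{T:deg1} for the $p$-dense case is fine when $p>2$, but to complete a proof along your lines you would still need an independent structural description of all $p$-closed degree one foliations, and the route you propose for that does not work.
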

\begin{proof}
    As a preliminary remark, note that $\Lin{1}{n}{\field}$ is an irreducible component of $\Fol{1}{n}{\field}$ for every $p \geq 0$. For $p > 0$ this follows from Proposition~\ref{P:linearpullback}. For $p = 0$, it follows by semi-continuity: since $\Fol{1}{n}{\field}$ and $\Lin{1}{n}{\field}$ are defined over $\mathbb{Z}$, upper semi-continuity of the Zariski tangent space dimension implies that its value at a general point of $\Lin{1}{n}{\field}$ in characteristic zero is at most its value in characteristic $p \gg 0$, which equals $\dim \Lin{1}{n}{\field} = \dim \Fol{1}{n}{\field}$ by Proposition~\ref{P:linearpullback}. When $p = 3$, Proposition~\ref{P:Closed} provides an additional irreducible component $\Closed{1}{n}{\field}$. For $p > 3$, Proposition~\ref{P:Log1} also provides an extra irreducible component $\Log{1,2}{n}{\field}$. 

     Let $\omega$ be a projective $1$-form defining a degree one foliation $\mathcal{F}$ and consider the $2$-form $d\omega$. Since $d \omega$ is closed, it is integrable. Therefore, we are in position to apply Theorem \ref{T:Medeiros} to obtain  homogeneous coordinates such that either: \ref{I:quadratico} $d\omega = df\wedge dx_0$ for some quadratic function $f\in \field[x_0,\ldots,x_n]$, or \ref{I:pullback} $d\omega = \sum_{i,j \in \{0,1,2\}} A_{ij}dx_{i}\wedge dx_{j}$ for some linear polynomials $A_{ij} \in \field[x_0,x_1,x_2]$.

     If $p\neq 3$ and $d\omega$ is as  in Case \ref{I:pullback}, then  multiplying its contraction with the radial  vector field by $1/3$ recovers $\omega$, showing that it depends only on the variables $x_0, x_1, x_2$. Thus, it defines a foliation in $\Lin{1}{n}{\field}$. 

     If $p \notin \{2,3\}$ and $d\omega$ is as in  Case \ref{I:quadratico}, then the explicit form of $d\omega$ shows that $\omega$ defines a foliation in  $\Log{1,2}{n}{\field}$. This completes the proof of  Item \ref{I:p<>23}. 

     Now consider the case $p=2$. If $d\omega$ is as in Case \ref{I:quadratico}, then 
     \[
        \omega = 3 \omega = i_R(d\omega) = 2fdx_0 + x_0 df = x_0 df \, .
     \]
     This implies that $\omega$ is either identically zero or that $\omega$ has a zero locus of codimension one. In either case,  $\omega$ does not define a codimension one foliation of degree one. Thus, in characteristic two,  the only irreducible component of $\Fol{1}{n}{\field}$ is $\Lin{1}{n}{\field}$.

     For $p=3$, we already know from Propositions \ref{P:linearpullback} and \ref{P:Closed} that $\Lin{1}{n}{\field}$ and  $\Closed{1}{n}{\field}$ are irreducible components of $\Fol{1}{n}{\field}$. If $d \omega$ is as in Case \ref{I:quadratico} then:
     \[
        0 = 3\omega =  i_{R}d\omega = 2fdx_0-x_0df = -(fdx_0+x_0df) = -d(fx_0)  .
     \]
     This implies that $f$ is a nonzero multiple of $x_0^2$, so  $d\omega = 0$, implying that $\omega$  defines a foliation in $\Closed{1}{n}{\field}$ in this case.

     Finally, we analyze the possibility that $d \omega$ is as in Case \ref{I:pullback}. Since $p=3$, we have $i_R d \omega = 3 \omega =0$. As the coefficients of $d \omega$ have degree one,   we may write
     \[
        d\omega = \alpha \cdot (i_R dx_0 \wedge dx_1 \wedge dx_2)
     \]
     for some $\alpha \in \field$.
     If $\alpha=0$, then $\omega$ defines a foliation in  $\Closed{1}{n}{\field}$. Otherwise, if $\alpha \neq 0$, consider the vector field
     $v = \frac{\alpha}{2} x_0 \frac{\partial }{\partial x_0}$. Then, a straightforward computation shows that 
     \[
        d \omega = d (i_vi_R dx_0 \wedge dx_1 \wedge dx_2)  \, .
     \]
     Therefore, we can write: 
     \[
        \omega = i_vi_R dx_0 \wedge dx_1 \wedge dx_2 + \theta
     \]
     where $\theta$ is a closed polynomial $1$-form with coefficients of degree two annihilated by the radial vector field $R$. 
     By Lemma \ref{L:projectiveclosed}, we conclude that $\theta = df$. 
     The integrability condition $\omega \wedge d \omega = 0 $ implies:
     \[
        0 = df \wedge d \omega = \alpha \cdot df \wedge (i_R dx_0 \wedge dx_1 \wedge dx_2) = \alpha \cdot i_R( df \wedge dx_0 \wedge dx_1 \wedge dx_2 ) \, .
     \]
     Thus $f \in \field[x_0,x_1, x_2, x_3^3, \ldots, x_n^3]$, and consequently
     \[
        \theta = df \in \bigoplus_{i=0}^2 \field[x_0,x_1,x_2] \cdot dx_i \, .
     \]
     It follows that $\omega$ defines a foliation in $\Lin{1}{n}{\field}$ when $p=3$ and $d \omega\neq 0$. This completes the proof. 
\end{proof}

\section{Closed logarithmic \texorpdfstring{$1$}{1}-forms and their residues}\label{SS:residues}

\subsection{Logarithmic $1$-forms}
Let $X$ be a smooth algebraic variety defined over an algebraically closed field $\field$. Let $D= \sum_{i=1}^{\ell} H_i$ be a reduced and effective divisor on $X$, supported on irreducible hypersurfaces $H_1, \ldots, H_{\ell}$ . Following \cite{MR586450}, we denote by $\Omega^1_X(\log D)$ the sheaf of rational $1$-forms $\omega$ on $X$ such that both $\omega$ and $d\omega$ have simple poles contained in the support of $D$. More precisely, for a point $x \in X$, if $h \in \mathcal O_{X,x}$ is a
local equation for $D$, then a germ of rational $1$-form $\omega$ belongs to $\Omega^1_X(\log D)_x$ if and only if both $h \omega$ and $h d\omega$
are germs of regular differential forms.

If $\omega \in \Omega^1_X(\log D)_x$ is a germ of logarithmic $1$-form with poles along $D$, then there exist  $g \in \mathcal O_{X,x}$,  $\eta \in \Omega^1_{X,x}$, and  $f \in \mathcal O_{X,x}$ such that $\restr{g}{H_i}$ is not identically zero for any $i$, and
\[
    g \omega = f \frac{dh}{h}  + \eta \, .
\]
Indeed, if $z_1, \ldots, z_n \in \mathcal O_{X,x}$ form a local system of parameters  such that $z_1$ is not identically zero on any $H_j$, then we can take $g = \frac{\partial h}{\partial z_1}$, see \cite[Section 1]{MR586450}.

\subsection{Residues}
Let $n_i : \normal{H_i} \to H_i$ be the normalization of $H_i$. The (local) residue of $\omega$ along $H_i$ is defined as
the rational function  $n_i^*(f/g)$. This local residue is well-defined, independent of the choices above, and
induces a morphism
\[
    \Res : \Omega^1_X(\log D) \longrightarrow \bigoplus_{i=1}^\ell \Rational{\normal{H_i}} ,
\]
where $\Rational{\normal{H_i}}$ is the sheaf of rational functions on $\normal{H_i}$, the normalization of $H_i$. The kernel of $\Res$ is precisely $\Omega^1_X$.

We follow the terminology of \cite[\href{https://stacks.math.columbia.edu/tag/0CBN}{Tag 0CBN}]{stacks-project}: a normal crossing divisor is a divisor that, locally in the étale topology, is supported on a union of hypersurfaces defined by a subset of a local system of parameters. A strict normal crossing divisor, or a simple normal crossing divisor, is one that locally, in the Zariski topology, is supported on a union of hypersurfaces defined by a subset of a local system of parameters.

\begin{prop}\label{P:Saito}
    Under the assumptions above, if there exists a closed subset $S \subset X$ such that $\codim S \ge 3$ and $\restr{D}{X-S}$ is a normal crossing divisor  on $X-S$, then the image of $\Res$ is contained in
    \[
        \bigoplus_{i=1}^\ell \mathcal O_{\normal{H_i}} \, ,
    \]
    meaning that the residues of $1$-forms in $\Omega^1_X(\log D)$ are regular functions on the normalization of $D$.
\end{prop}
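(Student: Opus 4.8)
The plan is to reduce the assertion to a statement at the codimension one points of each normalization $\normal{H_i}$, and then to settle that local statement through the classical description of residues along a normal crossing divisor. The first step exploits the normality of $\normal{H_i}$. The residue $\Res(\omega)$ is by construction a well-defined element of the function field of $\normal{H_i}$ (recall that $\restr{g}{H_i}$ is not identically zero), so it makes sense to ask where it is regular. Since $\normal{H_i}$ is normal, hence satisfies Serre's condition $S_2$, each of its local rings is the intersection of the discrete valuation rings at the codimension one points dominating it; consequently a rational function on $\normal{H_i}$ is regular as soon as it has no pole at any codimension one point. It therefore suffices to prove that $\Res(\omega)$ has no pole along any prime divisor of $\normal{H_i}$.

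Next I would carry out the codimension bookkeeping. Let $B \subset X$ be the locus where $D$ fails to be a normal crossing divisor; by hypothesis $B \subseteq S$, so $\codim_X B \ge 3$. As $H_i$ is a hypersurface, $\dim(B \cap H_i) \le \dim B \le \dim X - 3 = \dim H_i - 2$, so that $B \cap H_i$ has codimension at least two in $H_i$. Because $n_i$ is finite and finite morphisms preserve dimension, the preimage $n_i^{-1}(B \cap H_i)$ has codimension at least two in $\normal{H_i}$. In particular every codimension one point of $\normal{H_i}$ maps into the open set $X \setminus B$ over which $D$ is a normal crossing divisor.

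It then remains to verify the regularity of $\Res(\omega)$ over the normal crossing locus, and this I would do by the standard local computation. Working étale locally on $X \setminus B$, the divisor $D$ is cut out by $z_1 \cdots z_k$ for part $z_1, \ldots, z_k$ of a regular system of parameters, and any germ of logarithmic $1$-form can be written as $\sum_{j=1}^k g_j \frac{dz_j}{z_j} + \eta$ with $g_j$ and $\eta$ regular; the residue along the branch $\{z_i = 0\}$ is then the restriction of $g_i$ to that branch, which is regular. Since the formation of both $n_i$ and $\Res$ commutes with étale localization, this shows that $\Res(\omega)$ is regular at every point of $\normal{H_i}$ lying over $X \setminus B$, in particular at every codimension one point. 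Combined with the first step, $\Res(\omega)$ is regular on all of $\normal{H_i}$, as claimed.

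The step I expect to require the most care is the interface between the étale local hypothesis---$D$ is only assumed to be normal crossing, not strict normal crossing---and the global residue map: one must check that passing to an étale neighborhood in which the branches of $H_i$ separate is exactly what the normalization $n_i$ encodes, and that residues and regularity are insensitive to this étale localization. Once the codimension one reduction of the first step is granted, it legitimizes replacing the global problem by the étale local model, after which the residue computation itself is routine.
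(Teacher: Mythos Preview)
Your proposal is correct and follows essentially the same route as the paper: both arguments compute the residue étale-locally where $D$ is normal crossing (obtaining regularity there), use finiteness of $n_i$ to see that the bad locus has codimension at least two in $\normal{H_i}$, and then invoke normality of $\normal{H_i}$ to extend. The paper presents these steps in the opposite order and cites Saito's \cite{MR586450} for the local description rather than writing it out, but the content is the same; you are also more explicit than the paper about why the étale-local hypothesis interacts correctly with the normalization, which the paper dispatches in one sentence.
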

\begin{proof}
    If $\restr{D}{X-S}$ is a simple normal crossing divisor, then for every $x \in X-S$ there exists a local system of parameters
    $z_1, \ldots, z_n$ such that  $\Omega^1_X(\log D)_x = \sum_{i=1}^r  \mathcal O_{X,x} \frac{dz_i}{z_i} + \Omega^1_{X,x}$,  as  can be verified from  \cite[Theorem 2.9]{MR586450} and \cite[Section 10]{MR1193913}. Consequently, the residues of $\restr{\omega}{X-S}$ are regular functions on the normalization of $\restr{D}{X - S}$ by \cite[Theorem 2.9]{MR586450}. Since $S\cap H_i$ has codimension at least two in $H_i$, the same holds for $n_i^{-1}(S)$ in $\normal{H_i}$, as the normalization is a finite morphism. Therefore, the residues of $\omega$ are regular functions on $\normal{D} = \coprod \normal{H_i}$, the normalization of $D$.
    If $\restr{D}{X-S}$ is normal crossing, but not simple normal crossing, then it suffices to observe that the residue morphism commutes
    with étale morphisms. The regularity of the residues then follows from  simple normal crossing case.
\end{proof}

A divisor $D$ satisfying the assumptions of Proposition \ref{P:Saito} is called a normal crossing divisor in codimension two.

\begin{cor}\label{C:é log}
     Let $D = \sum_{i=1}^{\ell} H_i$ be a reduced and effective divisor on $\mathbb P^n_{\field}$ for some algebraically closed field $\field$. If $D$ is a normal crossing divisor in codimension two and $\omega \in H^0(\mathbb P^n_{\field}, \Omega^1_{\mathbb P^n_{\field}}(\log D))$ is a logarithmic $1$-form with poles on $D$, then there exist constants  $\lambda_1, \ldots, \lambda_{\ell} \in \field$  such that, in homogeneous coordinates,
    \[
        \omega = \sum_{i=1}^{\ell}  \lambda_i \frac{dh_i}{h_i} \, ,
    \]
    where $h_i$ are irreducible and reduced homogeneous polynomials defining $H_i$.
\end{cor}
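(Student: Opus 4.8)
The plan is to show that a global logarithmic $1$-form with poles on a normal-crossing-in-codimension-two divisor $D$ is, up to the Euler relation, a $\field$-linear combination of the $1$-forms $dh_i/h_i$, where $h_i$ is a defining polynomial of $H_i$. The starting observation, supplied by Proposition \ref{P:Saito}, is that the residue of $\omega$ along each $H_i$ is a \emph{regular} function on the normalization $\normal{H_i}$. Since $H_i$ is an irreducible hypersurface in $\mathbb P^n_{\field}$, its normalization is a projective variety, and the only global regular functions on a connected projective variety are constants; this identifies each residue with a scalar $\lambda_i \in \field$. First I would make this reduction precise: record that $\Res(\omega)=(\lambda_1,\dots,\lambda_\ell)$ for constants $\lambda_i$.

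Next I would form the auxiliary $1$-form $\eta = \omega - \sum_{i=1}^\ell \lambda_i \frac{dh_i}{h_i}$, interpreting each $\frac{dh_i}{h_i}$ as a rational $1$-form on $\mathbb P^n_{\field}$ with a logarithmic pole along $H_i$ of residue exactly $1$ (more precisely, the residue of $\frac{dh_i}{h_i}$ along $H_j$ is $\delta_{ij}$, since $h_i$ restricts to a unit times a nonzero section transverse to $H_j$). By construction $\eta$ is again logarithmic along $D$, but all its residues vanish. Since the kernel of the residue morphism $\Res$ is precisely $\Omega^1_X$ (as stated in the Residues subsection), it follows that $\eta$ is a \emph{global regular} $1$-form on $\mathbb P^n_{\field}$. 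Because $H^0(\mathbb P^n_{\field}, \Omega^1_{\mathbb P^n_{\field}}) = 0$, we conclude $\eta = 0$, which is exactly the desired identity.

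The one genuinely delicate point is the passage from residues to an equation on the nose, because the individual expressions $\frac{dh_i}{h_i}$ are not homogeneous of the correct weight unless one is careful about the Euler relation. The cleanest way to handle this is to work with $\omega$ as a projective $1$-form, i.e. a homogeneous $1$-form on $\mathbb A^{n+1}_{\field}$ annihilated by the radial vector field $R$; note that $i_R\left(\sum_i \lambda_i \frac{dh_i}{h_i}\right) = \sum_i \lambda_i \frac{i_R\,dh_i}{h_i} = \sum_i \lambda_i \deg(h_i)$ by Euler's relation, so that the combination $\sum_i \lambda_i \frac{dh_i}{h_i}$ is itself projective if and only if $\sum_i \lambda_i \deg(h_i)=0$. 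The constants $\lambda_i$ produced by the residue computation automatically satisfy this relation precisely because $\omega$ is projective and $\eta$ must be a projective regular $1$-form, hence zero; thus the homogeneity constraint is not an extra hypothesis but a consequence, and I would phrase the argument so that this emerges rather than being imposed.

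The main obstacle I anticipate is verifying that the residue of $\frac{dh_i}{h_i}$ is the constant function $1$ on $\normal{H_i}$ in arbitrary characteristic, especially where $H_i$ is singular or where distinct $H_j$ meet. For this I would invoke the explicit local description recalled in the Logarithmic $1$-forms subsection: taking $g = \partial h_i / \partial z_1$ for a suitable local parameter $z_1$ transverse to $H_i$, the residue of $\frac{dh_i}{h_i}$ is $n_i^*\!\left(\frac{\partial h_i/\partial z_1}{\partial h_i / \partial z_1}\right) = 1$, provided $h_i$ is reduced so that $\partial h_i/\partial z_1$ does not vanish identically on $H_i$ — which is where reducedness of $D$ and the normal-crossing-in-codimension-two hypothesis are used. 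The only subtlety is the possibility, in characteristic $p$, that every partial derivative of $h_i$ vanishes on $H_i$, i.e. that $h_i$ is a $p$-th power in the variables; this is excluded because $h_i$ is irreducible and reduced, so $\frac{dh_i}{h_i}$ genuinely has a simple pole with residue $1$ along $H_i$.
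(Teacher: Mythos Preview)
Your proposal is essentially correct and follows the same approach as the paper: extract constant residues via Proposition~\ref{P:Saito}, subtract the candidate form $\sum \lambda_i\,dh_i/h_i$, and use $H^0(\mathbb P^n_{\field},\Omega^1_{\mathbb P^n_{\field}})=0$ to conclude.

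The one place where you diverge from the paper is the treatment of the relation $\sum_i \lambda_i \deg(h_i)=0$. The paper obtains it directly by applying the residue theorem to the restriction of $\omega$ to a general line in $\mathbb P^n_{\field}$, and only \emph{then} observes that $\sum_i \lambda_i\,dh_i/h_i$ is a genuine section of $\Omega^1_{\mathbb P^n_{\field}}(\log D)$, so that the subtraction makes sense inside that sheaf. Your version instead claims the relation ``emerges'' because $\eta$ must be projective; as written this is circular, since you need $\sum_i \lambda_i\,dh_i/h_i$ to live in $\Omega^1_{\mathbb P^n_{\field}}(\log D)$ \emph{before} you can invoke $\ker\Res = \Omega^1_{\mathbb P^n_{\field}}$ and conclude $\eta=0$. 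The fix is to carry out the subtraction on $\mathbb A^{n+1}_{\field}$ rather than on $\mathbb P^n_{\field}$: there $\eta$ is a regular homogeneous $1$-form of degree $0$, hence $\eta=\sum_j c_j\,dx_j$, and comparing the two computations $i_R\eta = \sum_j c_j x_j$ and $i_R\eta = i_R\omega - \sum_i \lambda_i\deg(h_i) = -\sum_i \lambda_i\deg(h_i)$ forces both sides to vanish. Either route works; the paper's is slightly cleaner.
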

\begin{proof}
    The residues of $\omega$ are regular functions on proper algebraic varieties, hence they are constants $\lambda_1, \ldots, \lambda_{\ell}$. Applying the residue theorem to the restriction of $\omega$ to a general line implies that
    \[
        \sum_{i=1}^{\ell} \lambda_i \deg(H_i) = 0 .
    \]
    Consequently, the homogeneous rational form $\eta = \sum_{i=1}^{\ell} \lambda_i \frac{dh_i}{h_i}$  is annihilated by the radial vector field, hence it defines a section of $H^0(\mathbb P^n_{\field}, \Omega^1_{\mathbb P^n_{\field}}(\log D))$. The difference $\omega - \eta$ is a logarithmic $1$-form with vanishing residues and thus belongs to $H^0(\mathbb P^n_{\field}, \Omega^1_{\mathbb P^n_\field})$. Since $\mathbb P^n_{\field}$ has no global $1$-forms, we conclude that $\omega = \eta$ as claimed.
\end{proof}

\subsection{A characterization of normal crossing in codimension two}
For later use, we record a characterization of divisors that are normal crossing in codimension two.

\begin{lemma}\label{L:snccodimII} 
    Let $X$ be an affine smooth variety defined over an algebraically closed field $\field$.
    Let $h \in \mathcal O_{X}$ be a  regular function, and let $D$ be the divisor defined by $h$. Assume that $D$ is reduced. 
    The following conditions are equivalent:
    \begin{enumerate}
        \item\label{I:divisorD} The divisor $D$ is normal crossing in codimension two.
        \item\label{I:forsome} The subvariety defined by the ideal $I(D)$, which is
        generated by $h$, $v(h)$ for every $v \in H^0(X,T_X)$, and
        \[
            \det \left(
            \begin{array}{cc}
                    v_1^2(h) & v_2(v_1(h)) \\
                    v_1(v_2(h)) & v_2^2(h) \\
            \end{array}
            \right) \quad \text{ for every } v_1,v_2 \in H^0(X,T_X)
        \]
        has codimension at least three.
    \end{enumerate}
\end{lemma}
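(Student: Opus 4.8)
The plan is to reinterpret $I(D)$ as cutting out the locus of non-nodal singular points of $D$, and then to match conditions \eqref{I:divisorD} and \eqref{I:forsome} stratum by stratum. \emph{Setup.} Since $X$ is affine and smooth, $T_X$ is globally generated, so the values $v(x)$ with $v\in H^0(X,T_X)$ span $T_{X,x}$ at every closed point $x$. Hence the ideal $(h,\{v(h)\})$ cuts out exactly $\sing(D)$, and at a point $x\in\sing(D)$ the numbers $v_iv_j(h)(x)$ define a symmetric bilinear form $\mathrm{Hess}_x$ on $T_{X,x}$ --- symmetric because $[v_1,v_2](h)(x)=0$ once $dh_x=0$. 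On $\sing(D)$ the displayed determinant equals $v_1^2(h)\,v_2^2(h)-(v_1v_2(h))^2$, i.e.\ the Gram determinant of $\mathrm{Hess}_x$ restricted to $\langle v_1(x),v_2(x)\rangle$; letting $v_1,v_2$ vary, these run over all $2\times2$ minors of $\mathrm{Hess}_x$. Since a symmetric form has rank at most $1$ exactly when its restriction to every $2$-plane is degenerate, I conclude that $Z:=V(I(D))$ is the set of points of $\sing(D)$ where $\operatorname{rank}\mathrm{Hess}_x\le 1$. Throughout I use that $D$ is reduced, so that $\sing(D)$ has codimension at least two.

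\emph{Local model along a codimension two stratum.} Let $W$ be a codimension two component of $\sing(D)$ and $x$ a general point of $W$. First, $T_xW$ lies in the radical of $\mathrm{Hess}_x$ (each $w(h)$ vanishes along $\sing(D)\supseteq W$), so $\operatorname{rank}\mathrm{Hess}_x\le 2$; thus $W\subseteq Z$ precisely when this rank is generically $\le1$. The crux is the claim that the generic point of $W$ is a node --- meaning $D$ is étale-locally $\{z_1z_2=0\}$, hence normal crossing --- if and only if $\operatorname{rank}\mathrm{Hess}_x=2$. To prove it, I choose étale coordinates with $W=\{z_1=z_2=0\}$; then $h\in(z_1,z_2)^2$, so $h=Az_1^2+Bz_1z_2+Cz_2^2$ with $A,B,C\in\mathcal O_{X,x}$, and the binary leading form $Q=\bar Az_1^2+\bar Bz_1z_2+\bar Cz_2^2$ (bars denoting evaluation at $x$) carries $\mathrm{Hess}_x$. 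A direct check, valid in every characteristic (in characteristic two the minor is simply $-\bar B^2$), shows $\operatorname{rank}\mathrm{Hess}_x=2$ iff $Q$ is reduced, i.e.\ a product of two distinct linear forms. When $Q$ is reduced I normalize it to $z_1z_2$ by a linear change and solve $h=u\,(z_1+pz_2)(z_2+qz_1)$ for a unit $u$ and $p,q\in\mathfrak m$ by $\mathfrak m$-adic successive approximation in $\widehat{\mathcal O}_{X,x}$; this exhibits $h$ as a unit times a product of two members of a regular system of parameters, so $D$ is normal crossing at $x$. This characteristic-uniform factorization of the node is the step I expect to require the most care, precisely because the usual Morse/splitting lemma is unavailable in characteristic two.

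\emph{Conclusion.} For \eqref{I:divisorD}$\Rightarrow$\eqref{I:forsome}, if $D$ is normal crossing away from a set $S$ of codimension at least three, then off $S$ the divisor is étale-locally $\{z_1\cdots z_k=0\}$; double points ($k=2$) have nonvanishing Hessian minor by the computation above and so avoid $Z$, while triple points ($k\ge3$) form a set of codimension at least three. Hence $Z$ sits inside a codimension three set, so $\codim Z\ge3$. For \eqref{I:forsome}$\Rightarrow$\eqref{I:divisorD}, assume $\codim Z\ge3$ and set $S=Z\cup\{x:\sing(D)\text{ is singular at }x\text{ or of codimension}\ge3\text{ at }x\}$, a set of codimension at least three. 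On $X\setminus S$ the locus $\sing(D)$ is smooth of pure codimension two with $\operatorname{rank}\mathrm{Hess}_x=2$ everywhere, so the node analysis makes $D$ normal crossing there, while $D$ is smooth off $\sing(D)$; thus $D$ is normal crossing in codimension two. Since the Hessian-minor test is étale-local, it is compatible with the étale-local normal crossing models, which is exactly what makes the two halves fit together.
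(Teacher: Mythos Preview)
Your proposal is correct and takes essentially the same approach as the paper: both identify $V(I(D))$ set-theoretically with the locus of singular points of $D$ where the Hessian of $h$ has rank at most one, and then characterize the node via the leading binary quadratic form of $h$ along a codimension-two component of $\sing(D)$—your direct argument for $(2)\Rightarrow(1)$ is the contrapositive of the paper's, which writes $h=ax^2+bxy+cy^2$ in $\mathcal O_{X,S}$ and observes that the determinant generators lie in $\mathfrak m_{X,S}+(b^2-4ac)$. Your version is a bit more explicit about characteristic two (where the minor reduces to $-\bar B^{\,2}$) and about the formal factorization of the node; the only points left implicit are that the non-reduced case is trivial (both conditions fail along a multiple component) and that the factorization in $\widehat{\mathcal O}_{X,x}$ yields an étale-local one (Artin approximation, or separation of branches over the henselization).
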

\begin{proof} 
    We prove the equivalence by considering both directions.

   First, suppose that $D$ is normal crossing in codimension two. Then, there exists a closed subset $S\subset X$ of codimension at least three such that $\restr{D}{X-S}$ is a normal crossing divisor. We may assume that $S$ also contains all loci where three or more irreducible components of $D$ meet, since these have codimension at least three in $X$. Consequently, for every $x \in D-S$, the germ of $D$ at $x$ is either smooth or, in a suitable étale neighborhood, equal to the sum of exactly two smooth irreducible divisors, say $H_1$ and $H_2$, intersecting transversely.
   In the first case, if we take $v \in T_{X,x}$ with value at $x$ not contained in the kernel of $dh$ then $v(h)$ is a unity. In the second case we take $v_1, v_2 \in T_{X,x}$ such that 
    \begin{enumerate}
        \item the value of  $v_1$ at $x$ is transverse to $T_{H_1}(x) := T_{H_1} / \mathfrak m_x T_{H_1}$ and contained in $T_{H_2}(x) := T_{H_2} / \mathfrak m_x T_{H_2}$; and
        \item the value of  $v_2$ at $x$ is transverse to $T_{H_2}(x)$ and contained in $T_{H_1}(x)$.
    \end{enumerate}     
    A simple computation shows that  the determinant
    \[
        \det \left(
            \begin{array}{cc}
                v_1^2(h) & v_2(v_1(h)) \\
                v_1(v_2(h)) & v_2^2(h) \\   
            \end{array}
        \right)
    \]
    is a unit in $\mathcal O_{X,x}$. Consequently, $I(D)_x$ contains a unit for every $x \in X-S$. Thus, the variety defined by $I(D)$ has codimension at least $3$ as wanted.

    To establish the converse direction, we will prove the contrapositive. Suppose that $D$ is reduced and not normal crossing in codimension two.  We will show that $I(D)$ defines a subvariety of codimension at most two in $X$. 
    
    Let $S$ be an irreducible component of the singular locus of $D$ that has codimension one in $D$. Since we assume that $D$ is not normal crossing in codimension two, such a component exists. Consider the local ring $\mathcal{O}_{X, S}$ of regular functions on $X$ along $S$. As $X$ is regular and $S$ has codimension two in $X$, it follows that $\mathcal{O}_{X, S}$ is a regular local ring of dimension two. Let $x,y \in \mathcal{O}_{X, S}$ be minimal generators of the maximal ideal  $\mathfrak{m}_{X,S}$ of $\mathcal{O}_{X, S}$. Since $S$ is contained in the singular locus of $D$, $h$ is an element of $\mathfrak{m}_{X, S}^2$. Writing $h = ax^2+bxy+cy^2$ for some functions $a,b,c \in \mathcal{O}_{X,S}$, we claim that the quadratic form $ax^2+bxy+cy^2$ degenerates along $S$, i.e., $b^2-4ac = 0$ along $S$. Indeed, $D$ is normal  crossing at the generic point $s$ of $S$ if and only if the initial form of $h$ in $\mathfrak{m}_{X,s}^2/\mathfrak{m}_{X,s}^3$ is a product of two nonproportional linear forms over the residue field, which holds if and only if $b^2 - 4ac \neq 0$ at $s$. Since $D$ fails to be normal crossing at $s$ by assumption, we conclude that $b^2-4ac = 0$ along $S$, meaning $S$ is contained in the hypersurface $\{ b^2-4ac=0\}$. On the other hand, for any sections $v_1,v_2 \in T_X$, the determinant
    \[
        \det \left(
            \begin{array}{cc}
                v_1^2(h) & v_2(v_1(h)) \\
                v_1(v_2(h)) & v_2^2(h) \\   
            \end{array}
        \right)
    \]
    has the form $s+(b^2-4ac)t$, where $s\in \mathfrak{m}_{X, S}$ and $t \in \mathcal{O}_{X, S}$. It follows that $I(D)\mathcal{O}_{X, S}$ is contained in the ideal $\mathfrak{m}_{X, S}$, and thus $S$ is contained in the subvariety defined by $I(D)$. Hence, this subvariety has codimension at most two as desired. 
\end{proof}

\begin{cor}\label{C:snc in cod 2 is open}
    Let $X$ be a smooth projective variety and let $T$ be an algebraic variety, both defined over an algebraically closed field $\field$.
    If $\Delta$ is a divisor on $\mathscr X = X \times T$  then the set
    \[
        \{ t \in T \, | \, \Delta_t = \restr{\Delta}{X \times \{ t\}} \text{ is a reduced  normal crossing divisor in codimension two } \}
    \]
    is an open subset of $T$.
\end{cor}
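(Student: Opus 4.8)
The plan is to reduce the statement to the characterization of normal crossing in codimension two furnished by Lemma \ref{L:snccodimII}, and then to combine the upper semicontinuity of fibre dimension with the properness of the projection $\mathrm{pr}_T \colon \mathscr X \to T$, which holds precisely because $X$ is projective. As a preliminary reduction, I note that the locus of $t\in T$ for which $\Delta_t$ is a genuine divisor is open: the set of $t$ with $X\times\{t\}\subseteq \mathrm{Supp}(\Delta)$ is the image under the proper map $\mathrm{pr}_T|_{\mathrm{Supp}(\Delta)}$ of the closed locus where the fibre has dimension $\dim X$, hence closed. Intersecting with this open set, I may assume throughout that $\Delta_t$ is a divisor, and it remains to show that the normal crossing in codimension two condition is open.

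First I would build a closed subset $Z \subseteq \mathscr X$ whose fibre over each $t$ is exactly the subvariety attached to $\Delta_t$ by Lemma \ref{L:snccodimII}. Since being normal crossing in codimension two is an étale--local condition on $X$, I cover $X$ by affine opens $U_\alpha$; on a smooth affine $U_\alpha$ the tangent sheaf is generated by its global sections, so over $U_\alpha \times T$ I dispose of finitely many relative (vertical, constant along $T$) vector fields $v_1, \dots, v_m$ generating $T_{U_\alpha}$ at every point. Writing $h_\alpha$ for a local equation of $\Delta$ on $U_\alpha\times T$, I let $\mathcal I_\alpha$ be the relative ideal generated by $h_\alpha$, by the $v(h_\alpha)$, and by the determinants
\[
    \det \left(
    \begin{array}{cc}
        v_1^2(h_\alpha) & v_2(v_1(h_\alpha)) \\
        v_1(v_2(h_\alpha)) & v_2^2(h_\alpha)
    \end{array}
    \right)
\]
as $v,v_1,v_2$ range over the relative vector fields. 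By construction the restriction of $\mathcal I_\alpha$ to $U_\alpha\times\{t\}$ is the ideal $I(\Delta_t\cap U_\alpha)$ of Lemma \ref{L:snccodimII}, so $V(\mathcal I_\alpha)$ is exactly the set of points at which the corresponding fibre fails to be normal crossing in codimension two. These loci are intrinsic to the fibres, hence agree on the overlaps $(U_\alpha\cap U_\beta)\times T$ and glue to a subset $Z\subseteq \mathscr X$ whose intersection with each $U_\alpha\times T$ is closed; therefore $Z$ is closed in $\mathscr X$, and I endow it with its reduced structure.

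Next I translate openness into a statement on fibre dimensions. By Lemma \ref{L:snccodimII}, $\Delta_t$ is normal crossing in codimension two if and only if $Z_t = Z\cap(X\times\{t\})$ has codimension at least three in $X$, i.e. $\dim Z_t \le \dim X - 3$. Thus the complement of the set in the statement equals $\{t\in T \mid \dim Z_t \ge \dim X - 2\}$, and it suffices to prove this is closed. Let $\pi\colon Z\to T$ be the restriction of $\mathrm{pr}_T$. The function $z\mapsto \dim_z \pi^{-1}(\pi(z))$ is upper semicontinuous on $Z$ (Chevalley's theorem on the dimension of fibres, see for instance \cite{stacks-project}), so $Z_{\ge \dim X - 2} := \{z\in Z \mid \dim_z \pi^{-1}(\pi(z)) \ge \dim X - 2\}$ is closed in $Z$. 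Since $X$ is projective, $\mathrm{pr}_T$ is proper, hence so is its restriction to the closed subset $Z$; as proper morphisms are closed, $\pi(Z_{\ge \dim X - 2})$ is closed in $T$. This image is precisely $\{t \mid \dim Z_t \ge \dim X - 2\}$, and its complement is the asserted open set.

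The main obstacle, and the step requiring most care, is the construction of $Z$ and the verification that its fibres coincide with the loci produced by Lemma \ref{L:snccodimII}. In particular one must check that a \emph{finite} generating set of relative vector fields cuts out the same underlying set as the larger collection appearing in the lemma; this is legitimate because the vanishing locus in Lemma \ref{L:snccodimII} is characterized intrinsically as the set of points where the fibre is neither smooth nor an ordinary transverse double point, and that description only involves vector fields spanning the tangent space at each point. Once $Z$ is known to be a closed subvariety of $X\times T$ with the correct fibres, the conclusion is a formal consequence of the semicontinuity of fibre dimension together with properness.
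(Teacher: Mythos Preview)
Your proof is correct and follows essentially the same approach as the paper: both reduce to Lemma~\ref{L:snccodimII} and conclude via upper semicontinuity of fibre dimension, with the paper citing \cite[Corollaire 13.1.5]{MR217086} directly while you spell out the construction of the global closed subset $Z\subset\mathscr X$ and invoke Chevalley's theorem together with properness of $\mathrm{pr}_T$. Your version is more careful in that it explicitly handles the gluing of the local loci and the preliminary reduction to the case where $\Delta_t$ is an honest divisor, details the paper leaves implicit.
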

\begin{proof} 
    Since the normal crossing in codimension two condition and the codimension of the subvariety defined by $I(D_t)$ are both local on $X$, we may cover $X$ by finitely many affine open subvarieties and apply Lemma~\ref{L:snccodimII} to each. For each $t\in T$, let $h_t$ be a local equation of $\Delta_t$ and $I(D_t)$ the ideal generated by $h_t$, $v(h_t)$ and
    \[
        \det \left(
               \begin{array}{cc}
                 v_1^2(h_t) & v_2(v_1(h_t)) \\
                 v_1(v_2(h_t)) & v_2^2(h_t) \\
               \end{array}
             \right) \quad \text{ for every } v_1,v_2 \in T_X
    \]
    By Lemma \ref{L:snccodimII}, $D_t$ is normal crossing in codimension two if and only if the subvariety $V_t$ of $X$ defined by $I(D_t)$ has codimension at least three. The result follows from the upper semi-continuity of the function $t\in T \mapsto \dim V_t$, see for instance \cite[Corollary 13.1.5]{MR217086}.
\end{proof}

\section{Logarithmic Components II}\label{SS:LogII}
We return to the study of the logarithmic components of the space of foliations on projective spaces.
Unlike the proof of Proposition \ref{P:Log1}, where we adapted arguments from characteristic zero to positive characteristic,  here we leverage specific properties of foliations over fields of positive characteristic. In particular, we explore fundamental properties of the Cartier transform of foliations, which we recall below.

\subsection{The Cartier transform of a logarithmic foliation}
Let $X$ be a smooth projective variety defined over an algebraically closed field $\field$ of characteristic $p>0$. 
Let $\F$ be a $p$-dense codimension one foliation defined on $X$. By \cite[Proposition 4.4]{MR4803684}, $\F$ is defined by a closed rational $1$-form $\omega$. 
The Cartier transform  $\CartierTransform{\F}$ of $\F$, as introduced in \cite[Section 4.5, Definition 4.17]{MR4803684}, is the distribution given by the kernel of $1$-form obtained by applying the Cartier operator on $\omega$. 

In the particular case that $\omega$ is of the form 
\[
    \omega = \sum_{i=1}^r \lambda_i \frac{df_i}{f_i} \, ,
\]
the result of applying the Cartier operator $\Cartier$ to it is rather easy to compute:
\begin{equation}\label{E:resultCartier}
    \Cartier(\omega) = \sum_{i=1}^r \lambda_i^{1/p} \frac{df_i}{f_i} \, .
\end{equation}

\begin{lemma}\label{L:continhaslog}
    Let $\F$ a codimension one foliation on $\mathbb P^n_{\field}$ defined by a closed logarithmic $1$-form
    \[
         \omega = \sum_{i=1}^r \lambda_i \frac{df_i}{f_i} \, ,
    \]
    where $\lambda_1, \ldots, \lambda_r \in \field$ are constants. 
    Assume that the following conditions hold:
    \begin{enumerate}
        \item for any $i$ there exists a $j$ such that the ratio $\lambda_i / \lambda_j$ does not belong to $\mathbb F_p$.
        \item The polar divisor of $\omega$ is a simple normal crossing divisor. 
    \end{enumerate}
    Then $\deg(\pkernel{\F}) = \deg(\F) - 1$ and $\deg( \pdegeneracy{\F}) = \deg(\F) + 2$. 
\end{lemma}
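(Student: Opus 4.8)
The goal is to compute the degrees of the kernel foliation $\pkernel{\F}$ and the degeneracy divisor $\pdegeneracy{\F}$ for a logarithmic foliation. The plan is to exploit the explicit formula for the Cartier operator given in Equation \eqref{E:resultCartier} together with the relation \eqref{E:pdivisor} linking $\deg(\F)$, $\deg(\pkernel{\F})$, and $\deg(\pdegeneracy{\F})$. Let me think about what the hypotheses give us.

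First I would establish that $\F$ is $p$-dense. If $\F$ were $p$-closed, the Cartier transform would coincide with $\F$ itself. Applying $\Cartier$ to $\omega = \sum_i \lambda_i \tfrac{df_i}{f_i}$ gives $\sum_i \lambda_i^{1/p}\tfrac{df_i}{f_i}$ by \eqref{E:resultCartier}. The foliation $\F$ is $p$-closed precisely when $\Cartier(\omega)$ and $\omega$ are proportional, i.e. when there is a constant $c$ with $\lambda_i^{1/p} = c\,\lambda_i$ for all $i$; equivalently all ratios $\lambda_i/\lambda_j$ lie in a common $\mathbb F_p$-line. Hypothesis (1) — that for each $i$ there is a $j$ with $\lambda_i/\lambda_j \notin \mathbb F_p$ — exactly rules this out, so $\F$ is $p$-dense and the Cartier transform $\CartierTransform{\F}$ is a genuinely different foliation.

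**Identifying the kernel via the two logarithmic forms.**

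The heart of the argument is to identify $\pkernel{\F}$ as the codimension two foliation tangent to both $\omega$ and $\Cartier(\omega)$. The key structural fact, from \cite[Section 4.5]{MR4803684}, is that the kernel of the $p$-curvature $\pkernel{\F}$ is the intersection of $\F$ with its Cartier transform $\CartierTransform{\F}$. Thus $\pkernel{\F}$ is defined by the pair $(\omega, \Cartier(\omega))$, equivalently by the $2$-form $\omega \wedge \Cartier(\omega)$. Using Corollary \ref{C:é log} and the simple normal crossing hypothesis (2), both $\omega$ and $\Cartier(\omega)$ are closed logarithmic $1$-forms with poles on the same divisor $D = \sum H_i$, of degree $\sum_i \deg(H_i) = \deg(\F)+2$. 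I would compute the degree of the codimension two foliation they cut out by a residue/normal-crossing analysis: because $D$ is simple normal crossing, the product $\omega \wedge \Cartier(\omega)$ has polar divisor of controlled order, and the saturation removes exactly the expected codimension one component, yielding $\deg(\pkernel{\F}) = \deg(\F)-1$.

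**Deducing the degeneracy divisor and locating the obstacle.**

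Once $\deg(\pkernel{\F}) = \deg(\F)-1$ is in hand, substituting into Equation \eqref{E:pdivisor} gives
\[
    \deg(\pdegeneracy{\F}) = p(\deg(\F) - (\deg(\F)-1) - 1) + \deg(\F) + 2 = p\cdot 0 + \deg(\F)+2 = \deg(\F)+2,
\]
which is the second claimed equality. So the two assertions are in fact linked: proving the kernel degree immediately yields the degeneracy degree through the numerical relation. I expect the main obstacle to be the rigorous computation of $\deg(\pkernel{\F})$, namely verifying that $\omega \wedge \Cartier(\omega)$ defines a saturated $2$-form whose singular scheme has the expected dimension and that no unexpected codimension one component appears in its zero locus. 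This requires the simple normal crossing hypothesis to control the behavior along each $H_i$: near a smooth point of $H_i$ both forms have a simple pole with residue $\lambda_i$ and $\lambda_i^{1/p}$ respectively, and one must check that the residues being distinct (a consequence of hypothesis (1)) prevents the forms from becoming proportional along $H_i$, so that $\omega\wedge\Cartier(\omega)$ vanishes to the correct order. The normal crossing condition along the codimension two strata $H_i \cap H_j$ is what guarantees the saturation is well-behaved, and carrying out this local analysis carefully is where the real work lies.
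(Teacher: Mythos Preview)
Your approach is essentially the same as the paper's: identify $\pkernel{\F}$ via the $2$-form $\theta = \omega \wedge \Cartier(\omega)$, use hypothesis (1) to show $(\theta)_\infty = (\omega)_\infty$ (no pole cancels), use the simple normal crossing hypothesis to show $\theta$ has no codimension one zeros, read off $\deg(\pkernel{\F}) = \deg(\F)-1$ from $\det N_{\pkernel{\F}} = \mathcal O_{\mathbb P^n_{\field}}((\theta)_\infty - (\theta)_0)$, and plug into \eqref{E:pdivisor}. The paper dispatches the no-zeros step by the ampleness trick (any hypersurface in the zero locus would meet the polar divisor, where a local SNC computation shows $\theta$ is nonvanishing), which is exactly the local analysis you flag as the main obstacle; your invocation of Corollary~\ref{C:é log} is unnecessary since $\Cartier(\omega)$ is already given explicitly by \eqref{E:resultCartier}.
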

\begin{proof}
    By \cite[Proposition 4.19]{MR4803684}, the tangent sheaf of $\pkernel{\F}$ coincides with the saturation of $T_{\F} \cap T_{\CartierTransform{\F}}$ inside $T_X$. 
    Under our assumptions, the distribution $\pkernel{\F}$ is defined by the rational $2$-form
    \[
        \theta = \omega \wedge \Cartier(\omega) = \sum_{0\le i < j \le r} \left(\lambda_i \lambda_j^{1/p} - \lambda_j \lambda_i^{1/p} \right) \frac{df_i}{f_i} \wedge \frac{df_j}{f_j} \, .
    \]
    Since the determinant of the normal bundle of $\pkernel{\F}$ satisfies
    \[
        \det(N_{\pkernel{\F}}) = \mathcal O_{\mathbb P^n_{\field}}\left((\theta)_\infty - (\theta)_0\right) = \mathcal O_{\mathbb P^n_{\field}}\left(\deg (\pkernel{\F}) + 3\right)
    \]
    it suffices  to verify that $(\theta)_{\infty} = (\omega)_{\infty}$ and that $\theta$ does not have codimension one zeros. 

    The equality $(\theta)_{\infty} = (\omega)_{\infty}$ follows from the assumption that for any $i$, there exists a $j$ such that $\lambda_i/ \lambda_j \notin \mathbb F_p$ which ensures that 
    $\lambda_i \lambda_j^{1/p} - \lambda_j \lambda_i^{1/p}  \neq 0$. 

    To show that $\theta$ has no codimension one zeros, assume for contradiction that its  zero locus contains a hypersurface $Z$. The hypersurface $Z$ must intersect $(\theta)_{\infty} = (\omega)_{\infty}$ as both are ample divisors. However, because $(\theta)_{\infty}$ is a simple normal crossing divisor, a local computation shows that $\theta$ remains nonzero in a neighborhood of its polar divisor, contradicting the existence of $Z$. This completes the proof. 
\end{proof}

\subsection{Proof of Theorem \ref{THM:Log}}
For the reader's convenience, we recall the statement of Theorem \ref{THM:Log}. 

\begin{thm}\label{T:Log}
    Let $r\ge 2$ and $1 \le d_1 \le \ldots \le d_r$ be positive integers and let
    $\field$ be an algebraically closed field of characteristic $p>0$. Assume that one
    of the following conditions holds:
    \begin{enumerate}
        \item\label{I:log1} $r=2$ and none of the integers $d_1$, $d_2$, and $d_1+d_2$ is divisible
        by $p$; or
        \item\label{I:log2} $r=2$ and both $d_1$ and $d_2$ are divisible by $p$; or
        \item\label{I:log3} $r>2$ and the set of integers $i\in\{1,\ldots,r\}$ such that $p$ divides
        $d_i$ has cardinality different from $r-1$ and $r-2$.
    \end{enumerate}
    Then $\Log{d_1, \ldots, d_r}{n}{\field}$ is an irreducible component of
    $\Fol{d}{n}{\field}$ (with its reduced scheme structure) for $n\ge 3$ and
    $d = \sum_{i=1}^r d_i - 2$.
\end{thm}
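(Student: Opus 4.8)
The plan is to prove that $\Log{d_1,\ldots,d_r}{n}{\field}$ is an irreducible component of $\Fol{d}{n}{\field}$ by showing that at a sufficiently generic logarithmic foliation $\F$ the Zariski tangent space of $\Fol{d}{n}{\field}$ has dimension equal to $\dim \Log{d_1,\ldots,d_r}{n}{\field}$, which would force $\Log{d_1,\ldots,d_r}{n}{\field}$ to be a full-dimensional component through $\F$. The guiding idea, in contrast to the characteristic-zero de Rham--Saito argument of Proposition \ref{P:Log1}, is to exploit the $p$-curvature: I will use the invariance of the numerical data $\deg(\pkernel{\F})$ and $\deg(\pdegeneracy{\F})$ along an irreducible component of $\Fol{d}{n}{\field}$ (as provided by \cite[Section 6]{MR4803684}) to constrain any deformation of $\F$ to remain logarithmic. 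The crucial input here is Lemma \ref{L:continhaslog}: first I would check that, for a generic choice of residues $\lambda_i$ and of hypersurfaces $f_i$ in simple normal crossing position, the hypothesis of Lemma \ref{L:continhaslog} is satisfied. This requires that the divisibility conditions of the two cases guarantee that for every $i$ there is a $j$ with $\lambda_i/\lambda_j \notin \mathbb F_p$; I expect the arithmetic cases $(1)$ and $(2)$ are precisely engineered so that $\Lambda$ (the space of admissible residues from Section \ref{SS:Log1}) contains such generic tuples, so that $\F$ is $p$-dense with $\deg(\pkernel{\F}) = \deg(\F)-1$ and $\deg(\pdegeneracy{\F}) = \deg(\F)+2$.

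Next I would analyze an arbitrary irreducible component $\Sigma \subset \Fol{d}{n}{\field}$ containing a generic $\F \in \Log{d_1,\ldots,d_r}{n}{\field}$. Semicontinuity and the constancy results from \cite{MR4803684} ensure that a general member $\F'$ of $\Sigma$ is again $p$-dense with the same invariants $\deg(\pkernel{\F'})=\deg(\F)-1$ and $\pdegeneracy{\F'}$ of degree $\deg(\F)+2$. Since $\F'$ is $p$-dense, by \cite[Proposition 4.4]{MR4803684} it is defined by a closed rational $1$-form, and the kernel distribution $\pkernel{\F'}$ is defined by the rational $2$-form $\omega'\wedge \Cartier(\omega')$ as in Lemma \ref{L:continhaslog}. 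The heart of the argument is to show that the polar divisor of $\omega'$ must be reduced with prescribed component degrees summing to $d+2$, and that $\omega'$ is a genuine closed logarithmic $1$-form, i.e.\ a $\field$-linear combination $\sum \lambda_i' \frac{dg_i}{g_i}$; for this I would invoke Corollary \ref{C:é log}, which applies once one knows the polar divisor is normal crossing in codimension two. Here Corollary \ref{C:snc in cod 2 is open} is essential: the normal-crossing-in-codimension-two locus is open in the component $\Sigma$, and since the distinguished $\F$ has simple normal crossing polar divisor, the generic $\F'$ inherits a normal crossing in codimension two polar divisor, putting $\F'$ in the hypotheses of Corollary \ref{C:é log}.

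Having identified $\F'$ as logarithmic with the correct number $r$ of polar components (forced by the invariance of $\deg \pdegeneracy{\F}$ and the bound on the number of residues not lying in a common $\mathbb F_p$-ray), I would conclude that $\F' \in \Log{d_1,\ldots,d_r}{n}{\field}$, whence $\Sigma \subseteq \Log{d_1,\ldots,d_r}{n}{\field}$; combined with the reverse containment $\Log{d_1,\ldots,d_r}{n}{\field}\subseteq \Sigma$ and the irreducibility of $\Log{d_1,\ldots,d_r}{n}{\field}$ (it is the closure of the image of the irreducible parameter space under $\Phi_{n,d_1,\ldots,d_r}$), this yields the claim. I anticipate the main obstacle to be the bookkeeping on the number and degrees of the polar components of the deformed form $\omega'$: one must rule out the splitting of a degree-$d_i$ factor into several factors, or the collision of factors, in a way that would either change $r$ or violate the constancy of $\deg \pdegeneracy{\F}$. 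It is precisely here that the excluded cardinalities $r-1$ and $r-2$ in hypothesis $(2)$ enter, since those are exactly the configurations in which $\Lambda$ fails to contain enough residues outside a common $\mathbb F_p$-line for Lemma \ref{L:continhaslog} to apply; controlling this degeneration, rather than the deformation-theoretic framework itself, is the delicate point.
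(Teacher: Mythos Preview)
Your approach is essentially the same as the paper's: choose $\F$ defined by $\omega=\sum\lambda_i\,df_i/f_i$ with the $f_i$ in simple normal crossing position, use Lemma \ref{L:continhaslog} to identify $\pdegeneracy{\F}$ with the polar divisor $D$, then combine the constancy of $\deg(\pdegeneracy{\F'})$ along an irreducible component $\Sigma$ with Corollary \ref{C:snc in cod 2 is open} and Corollary \ref{C:é log} to force every nearby $\F'$ to be logarithmic, and finally match the combinatorial type $(d_1,\ldots,d_r)$.

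Two remarks. First, your opening sentence about computing the Zariski tangent space is not what you (or the paper) actually do; the argument is an openness/specialisation argument, not an infinitesimal one, so that sentence should be dropped. Second, you correctly flag that one must know the polar divisor of $\omega'$ (equivalently $\pdegeneracy{\F'}$) is \emph{reduced} before Corollary \ref{C:snc in cod 2 is open} and Corollary \ref{C:é log} can be applied, but you do not say how; the paper invokes \cite[Lemma 6.3]{MR4803684} for exactly this. Finally, for the bookkeeping on $r$ and the $d_j$, the paper restricts to a general line $\varphi:\mathbb P^1_{\field}\hookrightarrow\mathbb P^n_{\field}$ and tracks the \emph{number of distinct residues} of $\varphi^*\omega$ (which requires choosing the $\lambda_i$ pairwise with $\lambda_i/\lambda_j\notin\mathbb F_p$, a bit stronger than the hypothesis of Lemma \ref{L:continhaslog}); the upper bound $r'\le r$ comes from upper semicontinuity of the number of irreducible components of $\pdegeneracy{\F'}$ in the family, and the lower bound $r'\ge r$ from the fact that distinct residues stay distinct under deformation. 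The degrees $d_j$ are then pinned down because each can only drop under deformation while their sum stays $d+2$.
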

\begin{proof}
    Case~(\ref{I:log1}) is the content of Proposition~\ref{P:Log1}. We therefore assume that condition~(\ref{I:log2}) or~(\ref{I:log3}) holds.
    Our assumptions ensure the existence of  $\lambda_1, \ldots, \lambda_r \in \field$ such that
    \[
        \sum_{i=1}^r d_i \lambda_i=0 \, ,
    \]
    with the additional property that $\lambda_i/\lambda_j \notin \mathbb F_p$ for every $i \neq j$. 
    
    Let $f_i \in H^0(\mathbb P^n_{\field}, \mathcal O_{\mathbb P^n_{\field}}(d_i))$ be homogeneous polynomials defining smooth hypersurfaces $H_i$ such that the divisor $D = \sum_{i=1}^r H_i$ is a simple normal crossing divisor. Let $\omega \in H^0(\mathbb P^n_{\field}, \Omega^1_{\mathbb P^n_{\field}}(\log D))$ be the logarithmic  $1$-form defined in homogeneous coordinates by
    \[
        \omega = \sum_{i=1}^r \lambda_i \frac {df_i}{f_i}  \, ,
    \]
    and let $\F$ be the foliation defined by $\omega$.
    Let $\Sigma$ be any irreducible component of  $\Fol{d}{n}{\field}$ containing $\F$. We will show that
    $\Sigma = \Log{d_1, \ldots, d_r}{n}{\field}$.

    By Lemma \ref{L:continhaslog},  the polar divisor $D$ of $\omega$ coincides with the degeneracy divisor of the $p$-curvature $\pdegeneracy{\F}$.
    Furthermore, by \cite[Lemma 6.3]{MR4803684}, there exists an open subset $U\subset \Sigma$, containing $\F$, such that for any  closed point in $U$ corresponding to a foliation $\F'$, the degeneracy divisor $\pdegeneracy{\F'}$ is reduced. Possibly after further restricting $U$, every $\F' \in U$ is $p$-dense (this is an open condition). By \cite[Proposition~4.4]{MR4803684}, $\F'$ is defined by a closed rational $1$-form $\omega'$ whose polar locus is $\pdegeneracy{\F'}$. Since $\pdegeneracy{\F'}$ is reduced and normal crossing in codimension two  (Corollary \ref{C:snc in cod 2 is open}), $\omega'$ is logarithmic with poles along $\pdegeneracy{\F'}$. By Corollary~\ref{C:é log}, this ensures that each $\F' \in U$ belongs to some $\Log{d_1', \ldots, d_{r'}}{n}{\field}$  where $r', d_1' \le  \ldots \le d_{r'}$ satisfy  $\sum_{i=1}^{r'} d_i' = d+2$.

     To conclude the proof, we must show that $r' = r$ and $(d_1', \ldots, d_r') = (d_1, \ldots, d_r)$. On one hand, the polar divisor $\pdegeneracy{\mathcal{F}'}$
    lives in the fixed linear system $|\mathcal{O}_{\mathbb{P}^n_\field}(d+2)|$, and
    the universal family of effective divisors over a linear system is flat (all fibers
    are Cartier divisors with the same Hilbert polynomial). Since the number of
    irreducible components of the fibers of a flat proper family is upper
    semi-continuous, and $\pdegeneracy{\mathcal{F}}$ has $r$ irreducible components,
    we conclude that $r' \le r$ for a general $\mathcal{F}' \in U$. On the other hand,
    consider an inclusion $\varphi:\mathbb P^1_{\field} \to \mathbb P^n_{\field}$ of a
    general line. The number $r$ equals the number of distinct residues of
    $\varphi^* \omega$. Since $\mathcal{F}' \in U$ is defined by a deformation
    $\omega' \in H^0(\mathbb P^n_{\field}, \Omega^1_{\mathbb P^n_{\field}}(\log
    \pdegeneracy{\mathcal{F}'}))$ of $\omega$, the residues of $\varphi^*\omega'$
    deform those of $\varphi^*\omega$, so $r' \ge r$. Therefore $r' = r$.
    
    Finally, each degree $d_j$ corresponds to the set of points in $\mathbb P^1_{\field}$ where $\varphi^*\omega$ has the same residue $\lambda_j$. Since deformations can only decrease the values of each $d_j$, but their sum remains fixed at $d+2$, it follows that  $(d_1', \ldots, d_r') = (d_1, \ldots, d_r)$, as claimed.
\end{proof}

As a corollary, we obtain a new proof of Calvo-Andrade's theorem on
the stability of generic logarithmic $1$-forms, this time from an
arithmetic perspective.

\begin{cor}\label{C:CalvoAndrade}
    Let $r\ge 2$ and $1\le d_1\le \cdots \le d_r$ be positive integers,
    let $n\ge 3$, and set $d=\sum_{i=1}^{r} d_i -2$. Then
    $\Log{d_1,\ldots,d_r}{n}{\mathbb{C}}$ is an irreducible component of
    $\Fol{d}{n}{\mathbb{C}}$.
\end{cor}
\begin{proof}
    Note that both $\Fol{d}{n}{\field}$ and
    $\Log{d_1,\ldots,d_r}{n}{\field}$ are defined over $\mathbb{Z}$.
    Suppose, for a contradiction, that
    $\Log{d_1,\ldots,d_r}{n}{\mathbb{C}}$ is strictly contained in some
    irreducible component
    $W_{\mathbb{C}}\subset\Fol{d}{n}{\mathbb{C}}$. Then there exist a
    finitely generated $\mathbb{Z}$-subalgebra $A\subset\mathbb{C}$ and
    an integral closed subscheme $W_A\subset\Fol{d}{n}{A}$, with
    geometrically irreducible fibers over $\mathrm{Spec}(A)$, whose base
    change to $\mathbb{C}$ recovers $W_{\mathbb{C}}$ and which strictly
    contains $\Log{d_1,\ldots,d_r}{n}{A}$. Since $A$ has characteristic
    zero, the morphism
    $\mathrm{Spec}(A)\to\mathrm{Spec}(\mathbb{Z})$ is dominant, so its
    image contains $\mathrm{Spec}(\mathbb{Z}[1/N])$ for some integer
    $N\ge 1$. For every prime $p\nmid N$ and every closed point of
    $\mathrm{Spec}(A)$ above $p$, the corresponding geometric fiber of
    $W_A$ is an irreducible closed subscheme of
    $\Fol{d}{n}{\overline{\mathbb{F}}_p}$ strictly containing
    $\Log{d_1,\ldots,d_r}{n}{\overline{\mathbb{F}}_p}$. For every $p$
    larger than $\max(d_1,\ldots,d_r,d_1+d_2)$, the hypotheses of
    Theorem~\ref{T:Log} hold (case~(\ref{I:log1}) for $r=2$;
    case~(\ref{I:log3}) for $r>2$, since the set $S$ is empty), and
    therefore $\Log{d_1,\ldots,d_r}{n}{\overline{\mathbb{F}}_p}$ is
    already an irreducible component of
    $\Fol{d}{n}{\overline{\mathbb{F}}_p}$. This is a contradiction, and
    Calvo-Andrade's result follows.
\end{proof}

\section{Foliations of degree two}

\subsection{Foliations defined by Lie algebras of vector fields}
We identify $\mathfrak{pgl}_{n+1}(\field)$, the quotient of $\End(\mathbb{A}^{n+1}_{\field})$ by multiples of the identity,  with the Lie algebra of regular vector fields on $\mathbb{P}^n_{\field}$, \ie $H^0(\mathbb{P}^n_{\field}, T_{\mathbb{P}^n_{\field}})$. 

If the characteristic of $\field$ does not divide $n+1$ then we can concretely, in homogeneous coordinates,  identify $\mathfrak{pgl}_{n+1}(\field)$ with the Lie algebra of vector fields on $\mathbb A^{n+1}_{\field}$ with linear coefficients and zero divergence.  In general, we will think of  $\mathfrak{pgl}_{n+1}(\field)$ as the quotient of the Lie algebra of linear vector fields on $\mathbb A^{n+1}_{\field}$ with linear coefficients modulo multiples of the radial vector field. 

Given a Lie subalgebra $\mathfrak{h} \subset \mathfrak{pgl}_{n+1}(\field)$, we consider the subsheaf of $T_{\mathbb{P}^n_{\field}}$ defined by the image of the evaluation morphism:
\[
   \mathrm{ev}: \mathfrak{h} \otimes \mathcal{O}_{\mathbb{P}^n} \longrightarrow T_{\mathbb{P}^n_{\field}}.
\]
Since, by assumption, $\mathfrak{h}$ is a Lie subalgebra of $\mathfrak{pgl}_{n+1}(\field)$, the image of $\mathrm{ev}$ is an involutive subsheaf of $T_{\mathbb{P}^n}$. Its saturation then defines the tangent sheaf of a foliation, which we denote by $\mathcal{F}(\mathfrak{h})$.

Before stating our next result, we briefly recall the terminology concerning Kupka and non-Kupka singularities that will be used below. Let $\mathcal F$ be a codimension $q$ foliation on a smooth variety $X$, and let
\[
    \omega \in H^0(X,\Omega_X^q\otimes \mathcal N)
\]
be a twisted $q$-form defining $\mathcal F$. A point $x\in \sing(\mathcal F)$
is called a {Kupka singularity} if there exist an open neighborhood $U$ of $x$, a nowhere vanishing local section $\tau \in H^0(U,\mathcal N)$, and a local $q$-form $\eta \in H^0(U,\Omega_X^q)$ such that 
\[
    \omega_{\mid U}=\eta\otimes \tau
\]
and
\[
    \eta(x)=0   \qquad\text{and}\qquad  d\eta(x)\neq 0.
\]
This definition is independent of the choice of local trivialization of $\mathcal N$.  The set of {non-Kupka singularities} of $\mathcal F$ is
the complement of the Kupka set inside $\sing(\mathcal F)$.

\begin{thm}\label{T:subalgebra}
    Let $\mathfrak{h} \subset \mathfrak{pgl}_{n+1}(\field)$ be such that $\mathrm{ev}$ is injective and has saturated image, \ie  $T_{\mathcal{F}(\mathfrak{h})} \simeq \mathfrak{h} \otimes \mathcal{O}_{\mathbb{P}^n_{\field}}$. Let $q$ denote the codimension of $\mathcal{F}(\mathfrak{h})$. If the set of non-Kupka singularities of $\mathcal{F}(\mathfrak{h})$ has codimension at least three, then any irreducible component $\Sigma$ of $\Fol[q]{d}{n}{\field}$ containing $\mathcal{F}(\mathfrak{h})$ is such that a general closed point of $\Sigma$ corresponds to a foliation defined by a Lie subalgebra of $\mathfrak{pgl}_{n+1}(\field)$ of dimension $\dim \mathfrak{h}$.
\end{thm}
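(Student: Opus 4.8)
The plan is to show that a general member $\mathcal{F}'$ of $\Sigma$ has tangent sheaf isomorphic to the trivial bundle $\mathcal{O}_{\mathbb P^n_{\field}}^{\oplus k}$, where $k = \dim \mathfrak{h} = \dim \mathcal{F}(\mathfrak{h}) = n-q$, and then to reconstruct the defining Lie subalgebra from its space of global sections. The deformation-theoretic content is supplied by \cite[Theorem B]{pereira2025distributionslocallyfreetangent}: since $T_{\mathcal{F}(\mathfrak{h})} \simeq \mathfrak{h}\otimes \mathcal{O}_{\mathbb P^n_{\field}}$ is locally free and the non-Kupka locus of $\mathcal{F}(\mathfrak{h})$ has codimension at least three, that result applies to $\mathcal{F}(\mathfrak{h})$ and controls the nearby foliations. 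Once the triviality of the tangent sheaf is seen to persist, the identification with a Lie-algebra foliation is essentially formal.

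First I would establish that local freeness, and in fact triviality, of the tangent sheaf persists along $\Sigma$ near $[\mathcal{F}(\mathfrak{h})]$. Working over the universal foliation on $\mathbb{P}^n_{\field} \times \Sigma$ (replacing $\Sigma$ by a smooth cover if necessary), the locus where the relative tangent sheaf is locally free is open, as is the Kupka condition; by \cite[Theorem B]{pereira2025distributionslocallyfreetangent} the point $[\mathcal{F}(\mathfrak{h})]$ lies in the open subset $U \subset \Sigma$ of foliations whose tangent sheaf is locally free with non-Kupka locus of codimension at least three. Because $\mathcal{O}_{\mathbb{P}^n_{\field}}^{\oplus k}$ is infinitesimally rigid — one has $\mathrm{Ext}^1(\mathcal{O}^{\oplus k}, \mathcal{O}^{\oplus k}) = H^1(\mathbb{P}^n_{\field}, \mathcal{O}_{\mathbb{P}^n_{\field}})^{\oplus k^2}=0$ — and the determinant of $T_{\mathcal{F}'}$ is constant in the flat family, the locally free tangent sheaves parametrized by a neighborhood of $[\mathcal{F}(\mathfrak{h})]$ in $U$ remain isomorphic to $\mathcal{O}_{\mathbb{P}^n_{\field}}^{\oplus k}$. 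Since $\Sigma$ is irreducible, this isomorphism type then holds for its general point.

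With $T_{\mathcal{F}'} \simeq \mathcal{O}_{\mathbb{P}^n_{\field}}^{\oplus k}$ in hand, I would set $\mathfrak{h}' := H^0(\mathbb{P}^n_{\field}, T_{\mathcal{F}'})$. The inclusion $T_{\mathcal{F}'}\hookrightarrow T_{\mathbb{P}^n_{\field}}$ identifies $\mathfrak{h}'$ with a $k$-dimensional subspace of $H^0(\mathbb{P}^n_{\field}, T_{\mathbb{P}^n_{\field}}) = \mathfrak{pgl}_{n+1}(\field)$, and the trivial bundle $T_{\mathcal{F}'}$ is generated by these global sections, so that $\mathrm{ev}\colon \mathfrak{h}'\otimes \mathcal{O}_{\mathbb{P}^n_{\field}} \to T_{\mathcal{F}'}$ is an isomorphism. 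As $\mathcal{F}'$ is a foliation, $T_{\mathcal{F}'}$ is closed under the Lie bracket, whence for $v,w\in \mathfrak{h}'$ the global vector field $[v,w]$ is again a section of $T_{\mathcal{F}'}$, i.e. $[v,w]\in \mathfrak{h}'$; thus $\mathfrak{h}'$ is a Lie subalgebra of $\mathfrak{pgl}_{n+1}(\field)$ with $\dim \mathfrak{h}' = \dim \mathfrak{h}$, and by construction $\mathcal{F}' = \mathcal{F}(\mathfrak{h}')$.

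I expect the main obstacle to be the second step: guaranteeing that the general deformation keeps a locally free tangent sheaf and that its isomorphism type does not degenerate. Upper semicontinuity of $h^0$ alone points the wrong way, so the argument genuinely relies on the rigidity of $\mathcal{O}^{\oplus k}$ together with the openness statement extracted from \cite[Theorem B]{pereira2025distributionslocallyfreetangent}. The essential hypothesis making this work is that the non-Kupka singularities have codimension at least three, which is precisely what prevents the tangent sheaf from ceasing to be locally free along a divisor under deformation.
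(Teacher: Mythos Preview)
Your proposal is correct and follows essentially the same route as the paper, whose proof is a one-line citation of \cite[Theorem B and Corollary 4.14]{pereira2025distributionslocallyfreetangent}. You invoke Theorem~B for the persistence of local freeness of the tangent sheaf under deformation and then supply the rigidity of $\mathcal{O}_{\mathbb{P}^n_{\field}}^{\oplus k}$ yourself via the vanishing of $\mathrm{Ext}^1$; this rigidity step is precisely what Corollary~4.14 of the cited reference packages, so the two arguments coincide.
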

\begin{proof}
    We draw on results from \cite{pereira2025distributionslocallyfreetangent},
    which are proved in a characteristic-free setting.

    By \cite[Corollary~4.11]{pereira2025distributionslocallyfreetangent}
    (the restatement of Theorem~B in loc.\ cit.), the set of points in
    $\Fol[q]{d}{n}{\field}$ corresponding to foliations with locally free
    tangent sheaf and non-Kupka singular locus of codimension at least three
    is open. Since $\mathcal{F}(\mathfrak{h})$ belongs to this set by
    hypothesis, every foliation in a sufficiently small open neighbourhood
    $U \subset \Sigma$ of $\mathcal{F}(\mathfrak{h})$ also has locally free
    tangent sheaf and non-Kupka singular locus of codimension at least three.

    By \cite[Corollary~4.14]{pereira2025distributionslocallyfreetangent},
    the isomorphism class of the tangent sheaf is locally constant on $\Sigma$.
    Since $T_{\mathcal{F}(\mathfrak{h})} \simeq \mathfrak{h} \otimes
    \mathcal{O}_{\mathbb{P}^n_{\field}}$ by hypothesis, the tangent sheaf of
    every foliation $\mathcal{F}' \in U$ is isomorphic to $\mathfrak{h}
    \otimes \mathcal{O}_{\mathbb{P}^n_{\field}}$, and in particular is defined
    by a Lie subalgebra of $\mathfrak{pgl}_{n+1}(\field)$ of dimension
    $\dim \mathfrak{h}$.
\end{proof}

\begin{ex}\label{Ex:affine}
    Let $\field$ be an algebraically closed field of characteristic $p >0$. Let $\mathfrak h$ be the subalgebra of $\mathfrak{pgl}_{4}(\field)$ generated by the classes of the linear vector fields
    \begin{align*}
        v_s & = - x_1\frac{\partial}{\partial x_1} - 2 x_2\frac{\partial}{\partial x_2} - 3x_3\frac{\partial}{\partial x_3} \, \\
        v_n &= x_0 \frac{\partial}{\partial x_1} + x_1 \frac{\partial}{\partial x_2} + x_2 \frac{\partial}{\partial x_3} \, .
    \end{align*}
    Note that $[v_s,v_n] = v_n$. 

    The corresponding foliation on $\mathbb P^3_{\field}$ is defined by the projective $1$-form
    \[
        \omega = i_R i_{v_s} i_{v_n} dx_0 \wedge \cdots \wedge dx_3 \, ,
    \]
    where $R$ is the radial vector field. The set of non-Kupka singularities of $\mathcal F(\mathfrak h)$ is, by definition, the zero locus of $d \omega$. An explicit computation shows that 
    \begin{align*}
        \omega &= (-x_0 x_2 x_3+2 x_1^2 x_3-x_1 x_2^2) dx_0 + (-3 x_0 x_1 x_3+2 x_0 x_2^2) dx_1 \\ & + (3x_0^2x_3-x_0x_1x_2  ) dx_2 + (-2x_0^2x_2+x_0x_1^2) dx_3
    \end{align*}
    and that
    \begin{align*}
        d \omega & =  (7x_1x_3 -3x_2^2 ) dx_0 \wedge dx_1 + (-7x_0x_3 - x_1x_2) dx_0 \wedge dx_2  \\ &+  (3x_0x_2 + x_1^2) dx_0 \wedge dx_3  + (5x_0x_2) dx_1 \wedge dx_2 \\ & + (                           -5 x_0 x_1) dx_1 \wedge dx_3  +
        (5 x_0^2) dx_2 \wedge dx_3 \, .
    \end{align*}
    The zero locus of $d \omega$ is then equal to 
    \begin{enumerate}
        \item The line $\{ x_0 = x_1 = 0\}$  when $p=3$; and 
        \item The two points $(1:0:0:0)$ and  $(0:0:0:1)$ when $p=5$; and
        \item The  point $(0:0:0:1)$ when $p \notin \{3,5\}$.
    \end{enumerate}
\end{ex}

Our next result shows that the so-called exceptional component $\Exc{3}{\mathbb C}$ of $\Fol{2}{3}{\mathbb C}$ has natural analogs in all characteristics $p \neq 3$.

\begin{prop}\label{P:exceptional} 
    Let $\field$ be an algebraically closed field of positive characteristic $p \ne 3$. The closure of the orbit through the point in $\Fol{2}{3}{\field}$ determined by Example \ref{Ex:affine} under the action of $\Aut(\mathbb P^3_{\field})$ is an irreducible component $\Exc{3}{\field}$ of $\Fol{2}{3}{\field}$.
\end{prop}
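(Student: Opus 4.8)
The plan is to invoke Theorem \ref{T:subalgebra} with $\mathfrak h$ the two-dimensional affine Lie algebra from Example \ref{Ex:affine}, so the strategy reduces to two verifications: first, that the evaluation morphism $\mathrm{ev}:\mathfrak h \otimes \mathcal O_{\mathbb P^3_{\field}}\to T_{\mathbb P^3_{\field}}$ is injective with saturated image; and second, that the non-Kupka locus of $\mathcal F(\mathfrak h)$ has codimension at least three in $\mathbb P^3_{\field}$. Once these hold, Theorem \ref{T:subalgebra} produces an irreducible component $\Sigma$ of $\Fol{2}{3}{\field}$ whose general member is again defined by a two-dimensional Lie subalgebra of $\mathfrak{pgl}_4(\field)$, and the remaining task is to identify $\Sigma$ with the $\Aut(\mathbb P^3_{\field})$-orbit closure $\Exc{3}{\field}$.

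The codimension-three condition is precisely what Example \ref{Ex:affine} was set up to deliver: the computation there shows that, for $p\neq 3$, the zero locus of $d\omega$ (which by definition is the non-Kupka singular set) consists of either two points or a single point, hence has codimension three in $\mathbb P^3_{\field}$, exactly as required. This is the reason for excluding $p=3$, since in that case the non-Kupka locus is the line $\{x_0=x_1=0\}$, of codimension two, and the hypothesis of Theorem \ref{T:subalgebra} fails. For injectivity and saturation of $\mathrm{ev}$, I would observe that $v_s$ and $v_n$ are linearly independent modulo the radial field and that the image subsheaf is a rank-two subsheaf of $T_{\mathbb P^3_{\field}}$ agreeing with $T_{\mathcal F(\mathfrak h)}$ away from the finite singular set; since the cosingular set has codimension three, the reflexive hull argument forces the image to be saturated (a rank-two reflexive subsheaf of $T_{\mathbb P^3}$ that is locally free away from codimension three and equals a free sheaf there).

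The main obstacle, and the step requiring the most care, is the final identification of the abstract component $\Sigma$ furnished by Theorem \ref{T:subalgebra} with the orbit closure $\Exc{3}{\field}$. Theorem \ref{T:subalgebra} only asserts that the generic member of $\Sigma$ is defined by \emph{some} two-dimensional subalgebra $\mathfrak h'\subset\mathfrak{pgl}_4(\field)$ of the same dimension; I must argue that all such subalgebras yielding foliations of the right degree and the same Kupka behavior lie in a single $\Aut(\mathbb P^3_{\field})$-orbit, namely that of the affine algebra. The plan is to classify two-dimensional Lie subalgebras of $\mathfrak{pgl}_4(\field)$ up to conjugation: up to isomorphism such an algebra is either abelian or the non-abelian affine algebra with bracket $[v_s,v_n]=v_n$, and one checks that only the non-abelian type (with $v_n$ nilpotent and $v_s$ acting with the eigenvalue pattern forced by degree-two and the codimension-three non-Kupka condition) produces a foliation in the same stratum. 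The dimension count $\dim \Exc{3}{\field} = \dim \Aut(\mathbb P^3_{\field}) + \dim(\text{Grassmannian-type parameter of such subalgebras}) - \dim(\text{stabilizer})$ should match $\dim\Sigma$, confirming equality of the closures.

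Finally, I would note that irreducibility of $\Exc{3}{\field}$ is automatic, being the closure of a single orbit under the connected group $\Aut(\mathbb P^3_{\field})=\mathrm{PGL}_4(\field)$, and that $\Exc{3}{\field}\subseteq\overline\Sigma=\Sigma$ together with the reverse containment from the orbit-type classification yields the desired equality, establishing that $\Exc{3}{\field}$ is an irreducible component of $\Fol{2}{3}{\field}$.
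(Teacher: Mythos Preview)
Your approach matches the paper's: invoke Theorem~\ref{T:subalgebra} (whose codimension-three hypothesis is supplied by the computation in Example~\ref{Ex:affine} precisely when $p\neq 3$) and then show that the resulting component $\Sigma$ coincides with the orbit closure. The difference lies only in how the identification step is executed. You propose a full conjugacy classification of two-dimensional subalgebras of $\mathfrak{pgl}_4(\field)$ followed by elimination of those not matching the degree and non-Kupka constraints; the paper instead argues by openness. Non-abelianness of $\mathfrak h'$ is an open condition and so persists near $\mathcal F(\mathfrak h)$; the generator $v_n'$ of the derived line $[\mathfrak h',\mathfrak h']$ is then a nilpotent element of $\mathfrak{pgl}(4)$; having one-dimensional kernel is again an open condition on $v_n'$, hence also persists; and Jordan normal form conjugates any such $v_n'$ to $v_n$. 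The dimension count is then made explicit: the regular nilpotent orbit has dimension $12$, and the remaining choice of the second generator contributes a $\mathbb P^2$, so $\dim\Sigma=13$ and $\Sigma$ equals the $\mathrm{PGL}(4)$-orbit closure through $\mathcal F(\mathfrak h)$. Your plan would succeed, but the semi-continuity argument sidesteps the need to enumerate and exclude all other subalgebra types.
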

\begin{proof}
   Notation as in Example \ref{Ex:affine}. Let $\mathfrak h$ be the non-abelian Lie algebra of $\mathfrak{pgl}_4(\field)$ generated by the classes of  $v_s$ and $v_n$. If $p\neq 3$ then the induced foliation $\mathcal F(\mathfrak h)$ satisfies the assumptions of Theorem \ref{T:subalgebra}. Therefore, if $\Sigma \subset \Fol[1]{2}{3}{\field}$ is an irreducible component containing $\F(\mathfrak h)$, there exists an open subset $U\subset \Sigma$ containing $\F(\mathfrak h)$ such that every foliation in $U$ is defined by a non-abelian subalgebra $\mathfrak h'$ of $\mathfrak{pgl}_4(\field)$. The image of the Lie-bracket morphism $\wedge^2 \mathfrak h' \to \mathfrak h'$ is generated by a nilpotent element $v_n'$ of $\mathfrak{pgl}_4(\field)$. Perhaps after passing to a smaller open subset of $U$, we may assume that this nilpotent element corresponds to a $4\times 4$ matrix with one-dimensional kernel. Jordan normal form implies that it is conjugated to $v_n$ and hence we can assume that it is equal to $v_n$. Notice also that $v_n$ corresponds to a regular nilpotent element of $\mathfrak{gl}_4(\field)$ (the Lie algebra of $4\times 4$ matrices over $\field$) and, as such, has $4$-dimensional centralizer and the dimension of the corresponding nilpotent orbit is $12$. Hence, $\Sigma$ is birationally equivalent to a $\mathbb P^2$-bundle over the nilpotent orbit through $v_n$ of the adjoint action on $\mathfrak{pgl}_4(\field)$. In particular, $\Sigma$ has dimension $13$ and is equal to the closure of the $\mathrm{PGL}(4)$ orbit through $\mathcal F(\mathfrak h)$.
\end{proof}

\begin{cor}\label{C:exceptional}
    Let $\field$ be an algebraically closed field of positive characteristic $p \ne 3$, and let $n \ge 4$ be an integer. The closure of the orbit through the point in $\Fol{2}{n}{\field}$ determined by a dominant linear pull-back $\mathbb P^n_{\field} \dashrightarrow \mathbb P^3_{\field}$ of Example \ref{Ex:affine} under the action of $\Aut(\mathbb P^n_{\field})$ is an irreducible component $\Exc{n}{\field}$ of $\Fol{2}{n}{\field}$.
\end{cor}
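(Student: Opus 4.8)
The plan is to bootstrap from Proposition \ref{P:exceptional}, which handles the case $n=3$, by combining the pull-back mechanism of Section \ref{SS:Linear} with the Lie-algebra criterion of Theorem \ref{T:subalgebra}. First I would fix the dominant linear projection $\pi : \mathbb P^n_{\field} \dashrightarrow \mathbb P^3_{\field}$, say $\pi(x_0:\ldots:x_n) = (x_0:x_1:x_2:x_3)$, and let $\mathcal G = \mathcal F(\mathfrak h)$ be the degree two foliation on $\mathbb P^3_{\field}$ from Example \ref{Ex:affine}. Setting $\F = \pi^* \mathcal G$, the first task is to exhibit $\F$ itself as a foliation $\mathcal F(\widetilde{\mathfrak h})$ attached to a Lie subalgebra $\widetilde{\mathfrak h} \subset \mathfrak{pgl}_{n+1}(\field)$. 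The natural candidate is $\widetilde{\mathfrak h}$ generated by the classes of $v_s$ and $v_n$ (now viewed as vector fields on $\mathbb A^{n+1}_{\field}$ involving only $x_0,\ldots,x_3$) together with the $n-3$ commuting vector fields $x_j \tfrac{\partial}{\partial x_j}$ for $j \in \{4,\ldots,n\}$ that span the kernel of $d\pi$; one should check that these extra generators are central enough that $\widetilde{\mathfrak h}$ is indeed a Lie subalgebra of the expected dimension $\dim\mathfrak h + (n-3)$, and that $\mathrm{ev}$ is injective with saturated image so that $T_{\F} \simeq \widetilde{\mathfrak h}\otimes \mathcal O_{\mathbb P^n_{\field}}$.

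Next I would verify the hypothesis of Theorem \ref{T:subalgebra}, namely that the set of non-Kupka singularities of $\F$ has codimension at least three in $\mathbb P^n_{\field}$. Since the non-Kupka locus is the zero set of $d\omega$ where $\omega$ defines $\F$, and $\omega = \pi^* \omega_{\mathcal G}$ is a pull-back, its degeneracy locus is $\pi^{-1}$ of the non-Kupka locus of $\mathcal G$ together with the indeterminacy locus of $\pi$. Proposition \ref{P:exceptional} tells us the non-Kupka locus of $\mathcal G$ on $\mathbb P^3_{\field}$ consists of finitely many points (for $p\neq 3,5$) or two points (for $p=5$), so its preimage has the right codimension; the indeterminacy locus $\{x_0=\cdots=x_3=0\}$ has codimension $4 \ge 3$. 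With this in hand, Theorem \ref{T:subalgebra} applies and yields that any irreducible component $\Sigma$ of $\Fol{2}{n}{\field}$ containing $\F$ has a general point defined by a Lie subalgebra of $\mathfrak{pgl}_{n+1}(\field)$ of dimension $\dim\widetilde{\mathfrak h}$.

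The remaining work is to pin down the structure of that generic Lie algebra and identify $\Sigma$ as the $\Aut(\mathbb P^n_{\field})$-orbit closure $\Exc{n}{\field}$, exactly as in the three-dimensional case. I would argue that the generic $\widetilde{\mathfrak h}'$ must again be a pull-back configuration: its derived algebra and nilpotent structure force a decomposition into a non-abelian rank-two piece conjugate to $\langle v_s, v_n\rangle$ (by the Jordan-form argument of Proposition \ref{P:exceptional}) plus an abelian complement of diagonalizable generators cutting out the $\pi$-fibers. A dimension count then shows $\Sigma$ is birational to a bundle over the product of the relevant nilpotent orbit and the space of linear projections $\mathbb P^n_{\field}\dashrightarrow\mathbb P^3_{\field}$, matching $\dim \Exc{n}{\field}$. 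The main obstacle I anticipate is the last step: controlling the generic Lie subalgebra of $\mathfrak{pgl}_{n+1}(\field)$ of the prescribed dimension and ruling out that it could deform away from the pull-back locus into a genuinely higher-dimensional or differently-structured subalgebra. Handling this cleanly likely requires either invoking the irreducibility of the pull-back component directly (via a result in the spirit of Lemma \ref{L:pullback}, which reconstructs the projection from a contained degree-zero subfoliation) or a careful analysis of how the Kupka-type conditions constrain the centralizer of the nilpotent part, so that the orbit-closure identification goes through verbatim from Proposition \ref{P:exceptional}.
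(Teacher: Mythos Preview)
Your approach has a structural gap at the very first step: the pull-back foliation $\F = \pi^*\mathcal G$ does \emph{not} satisfy the hypotheses of Theorem \ref{T:subalgebra}, because its tangent sheaf is not trivial. Concretely, $T_\F \simeq \mathcal O_{\mathbb P^n_{\field}}(+1)^{\oplus n-3} \oplus \mathcal O_{\mathbb P^n_{\field}}^{\oplus 2}$: the $\mathcal O(+1)$ summands come from the relative tangent sheaf of $\pi$, generated by the constant vector fields $\partial/\partial x_j$ for $j\ge 4$ (which are sections of $T_{\mathbb P^n_{\field}}(-1)$, not of $T_{\mathbb P^n_{\field}}$, hence not elements of $\mathfrak{pgl}_{n+1}(\field)$). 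Your proposed generators $x_j\,\partial/\partial x_j$ do lie in $\mathfrak{pgl}_{n+1}(\field)$, but they do not span $\ker d\pi$ and the resulting evaluation morphism has non-saturated image (it drops rank along each hyperplane $\{x_j=0\}$). So there is no Lie subalgebra $\widetilde{\mathfrak h}\subset\mathfrak{pgl}_{n+1}(\field)$ with $T_\F\simeq\widetilde{\mathfrak h}\otimes\mathcal O_{\mathbb P^n_{\field}}$, and Theorem \ref{T:subalgebra} simply does not apply to $\F$ when $n\ge 4$.

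The paper's proof avoids this by appealing directly to \cite[Theorem B and Corollary 4.14]{pereira2025distributionslocallyfreetangent}, which is the more general rigidity statement behind Theorem \ref{T:subalgebra} and handles tangent sheaves that are sums of line bundles of mixed degrees. That result guarantees every nearby foliation still has tangent sheaf $\mathcal O(+1)^{\oplus n-3}\oplus\mathcal O^{\oplus 2}$, and in particular contains a degree-zero codimension-three subfoliation coming from the $\mathcal O(+1)^{\oplus n-3}$ summand. Then Lemma \ref{L:pullback} (which you correctly flagged at the end, but for the wrong reason) forces the foliation to be a linear pull-back from $\mathbb P^3_{\field}$, and Proposition \ref{P:exceptional} finishes. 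So the route you anticipated as a fallback is in fact the main line of the argument, and the Lie-subalgebra analysis on $\mathbb P^n_{\field}$ that you set up is not needed and cannot be made to work as written.
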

\begin{proof}
    As verified in Example \ref{Ex:affine}, the non-Kupka singularities of the foliation $\mathcal F(\mathfrak h)$ have codimension three. The same property holds true for any  pullback of $\mathcal F(\mathfrak h)$ under a dominant linear projection $\mathbb P^n_{\field} \dashrightarrow \mathbb P^3_{\field}$. By applying  \cite[Theorem B and Corollary 4.14]{pereira2025distributionslocallyfreetangent} we deduce that any foliation sufficiently close to $\mathcal F(\mathfrak h)$ has tangent sheaf isomorphic to $\mathcal O_{\mathbb P^n_{\field}}(+1)^{\oplus n-3}\oplus \mathcal O_{\mathbb P^n_{\field}}^{\oplus 2}$, and therefore contains a subfoliation of degree zero and codimension three. Lemma \ref{L:pullback} then implies that this foliation  must be a linear pull-back of foliation on $\mathbb P^3$. Proposition \ref{P:exceptional} allows us to conclude. 
\end{proof}

Over the complex numbers, there exist exactly two irreducible components of
$\Fol{2}{3}{\mathbb C}$ whose general element corresponds to a foliation with
trivial tangent bundle: $\Exc{3}{\mathbb C}$ and $\Log{1,1,1,1}{3}{\mathbb C}$.
When $\field$ is arbitrary, Theorem \ref{THM:Log} ensures that
$\Log{1,1,1,1}{3}{\field}$ is an irreducible component of $\Fol{2}{3}{\field}$.
If the characteristic of $\field$ is different from $3$, then Proposition
\ref{P:exceptional} shows that $\Exc{3}{\field}$ is also an irreducible
component. Both $\Exc{3}{\field}$ and $\Log{1,1,1,1}{3}{\field}$ are defined over
$\mathbb{Z}$. Since the integrability condition is cut out by polynomial
equations with integer coefficients, $\Fol{2}{3}{\field}$ is a scheme over
$\mathrm{Spec}(\mathbb{Z})$, and the function associating to each prime $p$
the number of irreducible components of $\Fol{2}{3}{\field}$ whose general
element has trivial tangent bundle is constructible on $\mathrm{Spec}(\mathbb{Z})$, see \cite[\href{https://stacks.math.columbia.edu/tag/055B}{Tag 055B}]{stacks-project}.
Since its value at the generic point equals two, it equals two for all but
finitely many primes $p$, showing that the above list is complete for
$p \gg 0$. However, the situation in small characteristic remains an open
question. Similarly, it would be interesting to describe the irreducible
components of the variety/scheme of two-dimensional Lie subalgebras of
$\mathfrak{pgl}_{n+1}(\field)$, akin to the results established for
$\field = \mathbb C$ in \cite[Theorem C]{pereira2025distributionslocallyfreetangent}.
We do not expect an identical classification in positive characteristic, as
the variety of commuting pairs in $\mathfrak{pgl}_{n+1}(\field)$ fails to be
irreducible when the characteristic of $\field$ divides $n+1$, see
\cite{MR4831204}.

\subsection{Synthesis} Our last result summarizes our current understanding of the irreducible components of $\Fol{2}{n}{\field}$ for algebraically closed fields of positive characteristic. It is a straightforward combination of the results previously established throughout this paper. 

\begin{prop}
    Let $\field$ be an algebraically closed field of characteristic $p > 0$. The following subvarieties are irreducible components of the support of the scheme $\Fol{2}{n}{\field}$:
    \begin{enumerate}
        \item When $p=2$: $\Closed{2}{n}{\field}$, $\Log{2,2}{n}{\field}$, $\Log{1,1,1,1}{n}{\field}$,  $\QLin{2}{n}{\field}$, $\Exc{n}{\field}$.
        \item When $p=3$: $\Log{2,2}{n}{\field}$, $\Log{1,1,2}{n}{\field}$, $\Log{1,1,1,1}{n}{\field}$, $\Lin{2}{n}{\field}$.
        \item When $p\ge 5$: $\Log{2,2}{n}{\field}$, $\Log{1,3}{n}{\field}$, $\Log{1,1,2}{n}{\field}$, $\Log{1,1,1,1}{n}{\field}$, $\Lin{2}{n}{\field}$, $\Exc{n}{\field}$,
    \end{enumerate}
    Moreover, if $p \gg 0$, the above list is complete.
\end{prop}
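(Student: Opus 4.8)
The plan is to read the Proposition as a bookkeeping statement: every listed subvariety has already been shown to be an irreducible component in an earlier section, so the task reduces to checking that the relevant numerical hypotheses are satisfied for the given characteristic. I would organize the verification by the defining result rather than by the three cases, and throughout I would use that for degree two the logarithmic partitions are partitions of $d+2=4$.

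For the logarithmic families I would apply Theorem \ref{THM:Log}. The form $\Log{1,1,1,1}{n}{\field}$ has $r=4$ and $S=\{i:p\mid d_i\}=\emptyset$ for every $p$, so $|S|=0\notin\{r-1,r-2\}=\{2,3\}$ and case (3) applies in all three characteristics. Likewise $\Log{1,1,2}{n}{\field}$ has $r=3$, and $S=\emptyset$ whenever $p\neq 2$, giving $|S|=0\notin\{1,2\}$; this is exactly why it appears for $p=3$ and $p\ge 5$ but not for $p=2$, where $S=\{3\}$ has cardinality $r-2=1$ and Theorem \ref{THM:Log} does not apply. For $r=2$, the partition $(2,2)$ falls under the hypothesis ``both $d_i$ divisible by $p$'' when $p=2$ and under the hypothesis ``none of $d_1,d_2,d_1+d_2$ divisible by $p$'' when $p\ge 3$ (none of $2,2,4$ is divisible by such $p$), so $\Log{2,2}{n}{\field}$ is a component for every $p$. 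The partition $(1,3)$ satisfies the first hypothesis of Theorem \ref{THM:Log} only for $p\ge 5$; for $p=3$ exactly one entry is divisible by $p$, so $\Log{1,3}{n}{\field}$ is empty by the emptiness criterion of Section \ref{SS:Log1}, while for $p=2$ neither $1$ nor $3$ is divisible by $p$ but $d_1+d_2=4$ is, placing $\Log{1,3}{n}{\field}$ inside $\Closed{2}{n}{\field}$ by the remark following Proposition \ref{P:Log1}. This accounts for its presence only in the $p\ge 5$ list.

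The three remaining families come from the other propositions. By Proposition \ref{P:linearpullback}, $\Lin{2}{n}{\field}$ is a component precisely when $2\le p$, which holds for every prime, so it appears in all three cases (and since $2\le p$ we have $\QLin{2}{n}{\field}=\Lin{2}{n}{\field}$, so no distinct quasi-linear component arises). By Proposition \ref{P:Closed} the scheme $\Closed{pe-2}{n}{\field}$ is a component of $\Fol{pe-2}{n}{\field}$, and the constraint $pe-2=2$ forces $pe=4$, whose only prime solution is $p=2$, $e=2$; this is why $\Closed{2}{n}{\field}$ occurs only for $p=2$. Finally, Proposition \ref{P:exceptional} and its corollary give that $\Exc{n}{\field}$ is a component exactly when $p\neq 3$, explaining its appearance for $p=2$ and $p\ge 5$ and its absence for $p=3$.

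The only substantive point is the completeness assertion for $p\gg 0$, which is where I expect the real work to lie. Over the complex numbers $\Fol{2}{n}{\mathbb C}$ has exactly six irreducible components for $n\ge 3$ by Cerveau--Lins Neto \cite{MR1394970}, and these are precisely the six subvarieties appearing in the $p\ge 5$ case. I would spread out $\Fol{2}{n}{-}$ to a scheme of finite type over $\Spec\mathbb Z$ and invoke generic flatness together with the constructibility of the function sending a prime to the number of geometric irreducible components of the corresponding fibre; this function is then constant, equal to six, on a dense open subset of $\Spec\mathbb Z$, that is for all $p\gg 0$. Since the six subvarieties exhibited for $p\ge 5$ are pairwise distinct---distinguished for instance by the degree and the number of distinct residues of the degeneracy divisor $\pdegeneracy{\F}$ of a general member, together with the splitting type of $T_{\F}$---they must exhaust the components for $p\gg 0$. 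The main obstacle is making this spreading-out argument precise: one must control the integral model so that no characteristic zero component degenerates, splits, or merges with another for large $p$, ensuring that the component count genuinely transfers from the generic fibre to the fibres over $p\gg 0$.
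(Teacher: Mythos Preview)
Your proposal is correct and matches the paper's approach exactly: the paper itself offers no argument beyond declaring the result ``a straightforward combination of the results previously established throughout this paper,'' and your case-by-case verification against Theorem~\ref{THM:Log}, Propositions~\ref{P:linearpullback}, \ref{P:Closed}, \ref{P:exceptional}, and the corollary for $\Exc{n}{\field}$ is precisely that combination spelled out. Your spreading-out sketch for the completeness when $p\gg 0$ is also in the spirit of what the paper intends (it elsewhere invokes semi-continuity for a similar count), though the paper does not supply any details for this step either.
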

\begin{proof}
    In all cases, $\QLin{2}{n}{\field}$ is an irreducible component by
    Proposition~\ref{P:QLin}. When $p \ge 3$, the inequality $d = 2 < p$ gives
    $\QLin{2}{n}{\field} = \Lin{2}{n}{\field}$.

    The logarithmic components in each list are irreducible components by
    Theorem~\ref{THM:Log}. 

    When $p \ne 3$, the component $\Exc{n}{\field}$ is irreducible by
    Proposition~\ref{P:exceptional} for $n = 3$ and by
    Corollary~\ref{C:exceptional} for $n \ge 4$. When $p = 3$, the foliation of Example~\ref{Ex:affine} has non-Kupka singular locus of codimension two (a line), so Theorem~\ref{T:subalgebra} does not apply. Whether $\Exc{n}{\field}$ remains an irreducible component of $\Fol{2}{n}{\field}$ in characteristic three is an open question.

    When $p = 2$, the component $\Closed{2}{n}{\field}$ is irreducible by
    Proposition~\ref{P:Closed}.

    For completeness when $p \gg 0$: since the integrability condition is cut out  by polynomial equations with integer coefficients, $\Fol{2}{n}{\field}$ is a scheme
    over $\mathrm{Spec}(\mathbb{Z})$, and each component in the list is defined over $\mathbb{Z}$. The function associating to each prime $p$ the number of irreducible components of $\Fol{2}{n}{\field}$ is constructible on $\mathrm{Spec}(\mathbb{Z})$, \cite[\href{https://stacks.math.columbia.edu/tag/055B}{Tag 055B}]{stacks-project}, hence constant for all but finitely many primes. Since its value at the generic
    point equals six by \cite{MR1394970}, the list is complete for $p \gg 0$.
\end{proof}

\bibliography{references}{}
\bibliographystyle{amsplain}

\end{document}